\numberwithin{equation}{section}
\theoremstyle{plain}
\newtheorem{theorem}{Theorem}[section]
\newtheorem{proposition}[theorem]{Proposition}
\newtheorem{lemma}[theorem]{Lemma}
\newtheorem{claim}[theorem]{Claim}
\theoremstyle{remark}
\theoremstyle{definition}
\newcommand{\eps}{\varepsilon}
\begin{document}

\title{ Hearing the shape of ancient noncollapsed flows in $\mathbb{R}^{4}$}

\author{Wenkui Du, Robert Haslhofer}

\begin{abstract}
 We consider ancient noncollapsed mean curvature flows in $\mathbb{R}^4$ whose tangent flow at $-\infty$ is a bubble-sheet. We carry out a fine spectral analysis for the bubble-sheet function $u$ that measures the deviation of the renormalized flow from the round cylinder $\mathbb{R}^2 \times S^1(\sqrt{2})$ and prove that for $\tau\to -\infty$ we have the fine asymptotics $u(y,\theta,\tau)= (y^\top Qy -2\textrm{tr}(Q))/|\tau| + o(|\tau|^{-1})$,
 where $Q=Q(\tau)$ is a symmetric $2\times 2$-matrix whose eigenvalues are quantized to be either 0 or $-1/\sqrt{8}$. This naturally breaks up the classification problem for general ancient noncollapsed flows in $\mathbb{R}^4$ into three cases depending on the rank of $Q$. In the case $\mathrm{rk}(Q)=0$, generalizing a prior result of Choi, Hershkovits and the second author, we prove that the flow is either a round shrinking cylinder or $\mathbb{R}\times$2d-bowl. In the case $\mathrm{rk}(Q)=1$, under the additional assumption that the flow either splits off a line or is selfsimilarly translating, as a consequence of recent work by Angenent, Brendle, Choi, Daskalopoulos, Hershkovits, Sesum and the second author we show that the flow must be $\mathbb{R}\times$2d-oval or belongs to the one-parameter family of 3d oval-bowls constructed by Hoffman-Ilmanen-Martin-White, respectively. Finally, in the case $\mathrm{rk}(Q)=2$ we show that the flow is compact and $\mathrm{SO}(2)$-symmetric and for $\tau\to-\infty$ has the same sharp asymptotics as the $\mathrm{O}(2)\times\mathrm{O}(2)$-symmetric ancient ovals constructed by Hershkovits and the second author. The full classification problem will be addressed in subsequent papers based on the results of the present paper.
\end{abstract}

\maketitle

\tableofcontents

\section{Introduction}
In the analysis of mean curvature flow it is crucial to understand ancient noncollapsed flows. We recall that a mean curvature flow $M_t$ is called ancient if it is defined for all $t\ll 0$, and noncollapsed if it is mean-convex and there is an $\alpha>0$ so that every point $p\in M_t$ admits interior and exterior balls of radius at least $\alpha/H(p)$, c.f. \cite{ShengWang,Andrews_noncollapsing,HaslhoferKleiner_meanconvex} (in fact, by \cite{Brendle_inscribed,HK_inscribed} one can always take $\alpha=1$). By the work of White \cite{White_size,White_nature,White_subsequent} and by \cite{HH_subsequent} it is known that all blowup limits of mean-convex mean curvature flow are ancient noncollapsed flows. More generally, by Ilmanen's mean-convex neighborhood conjecture \cite{Ilmanen_problems}, which has been proved recently in the case of neck-singularities in \cite{CHH,CHHW}, it is expected even without mean-convexity assumption that all  blowup limits near any cylindrical singularity are ancient noncollapsed flows.

In a recent breakthrough \cite{BC1,BC2,ADS1,ADS2}, Brendle-Choi  and Angenent-Daskalopoulos-Sesum  classified all ancient noncollapsed flows in $\mathbb{R}^3$, or more generally in $\mathbb{R}^{n+1}$ under the additional assumption that the flow is uniformly two-convex. Specifically, they showed that any such flow is either a flat plane, a round shrinking sphere, a round shrinking cylinder, a translating bowl soliton, or an ancient oval. On the other hand, the classification of ancient noncollapsed flows in higher  dimensions without two-convexity assumption has remained a widely open problem.\footnote{There is a parallel story for the Ricci flow. Namely, a similar classification for ancient $\kappa$-noncollapsed flows in 3d Ricci flow, as conjectured by Perelman, has been obtained in \cite{Brendle_Bryant,ABDS,BDS}, with an extension to higher dimensions under the additional PIC2 assumption in \cite{LiZhang,BN,BDNS}, but the classification of general ancient $\kappa$-noncollapsed Ricci flows in higher dimensions has remained widely open.}

Very recently, Choi, Hershkovits and the second author made significant progress towards the classification of ancient noncollapsed flows in $\mathbb{R}^4$. Specifically, it has been shown in \cite{CHH_wing} that there do not exist any wing-like ancient noncollapsed flows, and it has been shown in \cite{CHH_translator} that any selfsimilarly translating solution is either $\mathbb{R}\times$2d-bowl, or a 3d round bowl, or belongs to the one-parameter family of $\mathbb{Z}_2\times\mathrm{O}(2)$-symmetric 3d oval-bowls constructed by Hoffman-Ilmanen-Martin-White \cite{HIMW}. The starting point for this analysis was to prove that for any noncompact ancient noncollapsed flow in  $\mathbb{R}^4$ that does not split off a line, the blowdown of any time-slice,
\begin{equation}
\check{M}_{t_0}:=\lim_{\lambda \to 0} \lambda M_{t_0},
\end{equation}
is always a ray. However, ancient noncollapsed flows can of course be either compact or noncompact. If the flow is compact, then the blowdown of any time-slice is just a point, and thus the blowdown of time-slices only in by itself does not provide sufficient information to get the analysis started.

In the present paper, we introduce a more general classification program for ancient noncollapsed flows in $\mathbb{R}^4$ that covers all cases, including the compact case. In fact, we will see that the classification problem actually involves three cases according to the rank of 
the so-called bubble-sheet matrix $Q$.

\bigskip

\subsection{Main results}
Let $M_t$ be an ancient noncollapsed flow in $\mathbb{R}^4$. To begin with, by the general theory from \cite{CM_uniqueness,HaslhoferKleiner_meanconvex}  the tangent  flow at $-\infty$ in suitable coordinates is always given by
\begin{equation}
\lim_{\lambda \rightarrow 0} \lambda M_{\lambda^{-2}t}=\mathbb{R}^{j}\times S^{3-j}(\sqrt{2(3-j)|t|})
\end{equation}
for some integer $0\leq j\leq 3$. If $j=3$ then the flow $M_t$ is simply a flat plane, and if $j=0$ then $M_t$ is simply a round shrinking sphere. If $j=1$ then by the classification of Brendle-Choi \cite{BC1,BC2} and Angenent-Daskalopoulos-Sesum \cite{ADS1,ADS2} the flow $M_t$ is a round shrinking cylinder, a round translating bowl soliton, or a two-convex ancient oval. Having discussed the known cases $j=0,1,3$ we can thus assume from now on that we are in general case $j=2$, namely that the tangent flow at $-\infty$ is a bubble-sheet:
\begin{equation}\label{bubble-sheet_tangent_intro}
\lim_{\lambda \rightarrow 0} \lambda M_{\lambda^{-2}t}=\mathbb{R}^{2}\times S^{1}(\sqrt{2|t|}).
\end{equation}
Equivalently, this means that the renormalized mean curvature flow
\begin{equation}
\bar M_\tau = e^{\frac{\tau}{2}}  M_{-e^{-\tau}}
\end{equation}
for $\tau\to -\infty$ converges to the normalized bubble-sheet
\begin{equation}
\Gamma=\mathbb{R}^2\times S^{1}(\sqrt{2}).
\end{equation}
Hence, we can write $\bar{M}_\tau$ as a graph of a function $u(\cdot,\tau)$ over $\Gamma\cap B_{\rho(\tau)}$, where $\rho(\tau)\to \infty$ as $\tau\to -\infty$, namely
\begin{equation}\label{graph_over_cylinder}
\left\{ q+ u(q,\tau)\nu(q) \, : \, q\in \Gamma\cap B_{\rho(\tau)} \right\} \subset \bar M_\tau\, ,
\end{equation}
where $\nu$ denotes the outwards unit normal of $\Gamma$. Our first main theorem describes the asymptotic behaviour of the bubble-sheet function $u$:

\begin{theorem}[bubble-sheet quantization]\label{spectral theorem}
For any ancient noncollapsed mean curvature flow in $\mathbb{R}^{4}$ whose tangent flow at $-\infty$ is given by \eqref{bubble-sheet_tangent_intro}, the bubble-sheet function $u$ satisfies
 \begin{equation}\label{main_thm_ancient}
\lim_{\tau\to -\infty} \Big\|\,
|\tau| u(y,\theta,\tau)- y^\top Qy +2\mathrm{tr}(Q)\, \Big\|_{C^{k}(B_R)} = 0
\end{equation}
for all $R<\infty$ and all integers $k$, where $Q$ is a symmetric $2\times 2$-matrix whose eigenvalues are quantized to be either 0 or $-1/\sqrt{8}$. Here, for $\mathrm{rk}(Q)\neq 1$ the matrix $Q$ is independent of time, while in the case $\mathrm{rk}(Q)=1$ we have
\begin{equation}
Q=R(\tau)^\top \begin{pmatrix}
0 & 0\\
0 & -1/\sqrt{8}
\end{pmatrix}R(\tau)
\end{equation}
for some rotation matrix $R(\tau)\in \mathrm{SO}(2)$ with $|\dot{R}(\tau)|=o(|\tau|^{-1})$.
\end{theorem}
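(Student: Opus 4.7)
The plan is to adapt the Merle-Zaag / Angenent-Daskalopoulos-Sesum spectral framework to the $3$-dimensional bubble-sheet $\Gamma=\mathbb{R}^2\times S^1(\sqrt{2})$; the new difficulty compared with the $\mathbb{R}\times S^{n-1}$ setting is that the neutral eigenspace is $7$-dimensional, so the order parameter is a symmetric $2\times 2$ matrix rather than a scalar. First I linearize: writing $\bar M_\tau$ as a normal graph of $u(\cdot,\tau)$ over $\Gamma$, the renormalized flow becomes $\partial_\tau u=\mathcal{L}u+\mathcal{N}(u)$ with $\mathcal{L}=\Delta_\Gamma-\tfrac12 y\cdot\nabla_y+1$ and $\mathcal{N}(u)$ at least quadratic. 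Separating variables into Hermite polynomials of degree $k$ in $y\in\mathbb{R}^2$ and Fourier modes of frequency $n$ on $S^1(\sqrt{2})$ gives eigenvalues $\lambda_{k,n}=1-k/2-n^2/2$, so the unstable eigenspace is $\mathcal{H}_+=\mathrm{span}\{1,y_1,y_2,\cos\theta,\sin\theta\}$ (dimension $5$), the neutral eigenspace is $\mathcal{H}_0=\mathrm{span}\{y_iy_j-2\delta_{ij}\}\oplus\mathrm{span}\{y_i\cos\theta,y_i\sin\theta\}$ (dimension $7$), and the rest is stable with a spectral gap of $1/2$.

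Next I would apply the Merle-Zaag ODE lemma to the cutoff function $\chi_{\rho(\tau)}u$, decomposed by projections $P_+,P_0,P_-$ in the Gaussian weighted norm, yielding the standard dichotomy that one of $u_+:=P_+u$ or $u_0:=P_0u$ dominates. The five unstable directions correspond geometrically to the four spatial translations of $\mathbb{R}^4$ together with a time-translation of the flow; choosing the spacetime center appropriately (parallel to the procedure in \cite{ADS1}) kills $u_+$ to leading order, so $u_0$ dominates and $\|u_+\|+\|u_-\|=o(\|u_0\|)$.

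The heart of the argument is then to project the equation onto $\mathcal{H}_0$. Writing $u_0 = y^\top Q(\tau)y - 2\mathrm{tr}\,Q(\tau) + W(\tau,y,\theta)$ with $W\in\mathrm{span}\{y_i\cos\theta,y_i\sin\theta\}$, substituting into $\partial_\tau u=\mathcal{L}u+\mathcal{N}(u)$, and testing against the Hermite tensors in the Gaussian weight, the resonant quadratic contribution of $\mathcal{N}(u)$ produces a matrix Riccati equation of the form $\dot Q = c\,Q^2 + \text{l.o.t.}$ with an explicitly computable scalar $c$. Simultaneous diagonalization of $Q(\tau)$ — direct in the $\mathrm{rk}(Q)\neq 1$ cases, and in the rank-$1$ case only modulo a slowly varying $\mathrm{SO}(2)$-rotation $R(\tau)$ coupled to $W$ — reduces this to scalar Riccati equations $\dot q_i=c q_i^2+o(q_i^2)$ for the eigenvalues; the only backward-in-time solutions consistent with $u\to 0$ and with the noncollapsing hypothesis (which excludes $q_i>0$) are $q_i\equiv 0$ or $q_i\sim -1/(\sqrt{8}\,|\tau|)$, yielding the quantization. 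A secondary estimate showing $\|W\|=o(|\tau|^{-1})$, obtained from the off-diagonal coupling in the projected equation, then converts into the bound $|\dot R(\tau)|=o(|\tau|^{-1})$ in the rank-$1$ case.

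The main technical obstacle will be the explicit Riccati computation: one needs a careful quadratic expansion of the MCF operator over $\Gamma$, the evaluation of Gaussian integrals of triple Hermite tensors, and the identification of the Riccati coefficient $c$ with the correct sign and magnitude — precisely what is needed to pin the nontrivial eigenvalues of $Q$ to the quantized value $-1/\sqrt{8}$ rather than just to the form $-1/(c|\tau|)$ for an unspecified $c$. A secondary subtlety is disentangling the mixed-mode contribution $W$ from the Hermite part of $Q$ in the rank-$1$ case, which is what ultimately forces the slow rotation $R(\tau)$ rather than a constant frame.
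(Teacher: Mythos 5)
Your spectral setup (eigenvalue computation, $5+7$ splitting, Merle--Zaag with a cutoff at a graphical radius) matches the paper, but three steps at the core of your plan do not work as described. First, the reduction to the neutral-dominant case by ``choosing the spacetime center appropriately'' is wrong: the unstable eigenfunctions $y_1,y_2$ (eigenvalue $1/2$) are not gauge modes -- translations along the $\mathbb{R}^2$-factor leave the cylinder invariant and cannot remove them, and there are genuine solutions (e.g.\ $\mathbb{R}\times$2d-bowl) whose unstable mode dominates after any recentering. The correct handling, as in the paper, is that if the unstable mode dominates then $\|\hat u\|_{\mathcal{H}}$ decays exponentially and the theorem holds with $Q=0$; the real work is then confined to the neutral-dominant case. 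Second, you cannot dismiss the four mixed neutral modes $y_i\cos\theta,\,y_i\sin\theta$ via ``off-diagonal coupling in the projected equation'': a priori they could be the dominant part of $P_0\hat u$, and nothing in the projected ODEs alone rules this out. The paper kills them with a geometric input, Zhu's bubble-sheet improvement theorem, which gives $|u_\theta|\leq e^{-\eta\rho(\tau)}$ on the scale of the graphical radius; this almost-$\mathrm{SO}(2)$-symmetry is also what makes the remainder $w$ and the $\theta$-dependent quadratic terms negligible in the expansion. Relatedly, to make the cutoff errors smaller than the $|\tau|^{-1}$-size neutral dynamics one needs a polynomial lower bound $\rho(\tau)\geq|\tau|^{\gamma}$ on the graphical radius, which the paper obtains from the Colding--Minicozzi Lojasiewicz inequality plus inner barriers (the ADS shrinkers rotated to $\Gamma_a$); your sketch has no substitute for this.

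Third, and most importantly, the claimed reduction to decoupled scalar Riccati equations $\dot q_i=cq_i^2+o(q_i^2)$ is false. After projecting, the errors in the spectral ODEs are only $o(|\vec{\alpha}|^2+|\tau|^{-100})$; in the rank-one scenario the smaller eigenvalue satisfies an equation whose error is comparable to the \emph{square of the larger} eigenvalue, i.e.\ $o(|\tau|^{-2})$, which is not $o(q_i^2)$, so the standard scalar Riccati dichotomy gives no information about it. Moreover, in that case the eigenframe rotates in time, so ``simultaneous diagonalization modulo a slowly varying $R(\tau)$'' presupposes the very asymptotics you are trying to prove. The paper circumvents both problems by (i) first proving a priori sign estimates from convexity ($\alpha_1,\alpha_2\leq o(\cdot)$ and $\alpha_3^2-\alpha_1\alpha_2\leq o(\cdot)$, i.e.\ asymptotic negative semidefiniteness, via the compactness argument of \cite{CHH_wing}), and (ii) passing to the rotation-invariant trace $S$ and determinant $D$, which satisfy clean Riccati-type equations $\dot S=-\sqrt8(S^2-2D)+o(|\tau|^{-2})$, $\dot D=-\sqrt8 SD+o(|\tau|^{-3})$; a phase-plane/Lyapunov analysis in rescaled variables then shows only the two extremal behaviors $(S,D)\sim(1/(\sqrt8\tau),o(\tau^{-2}))$ or $(1/(\sqrt2\tau),1/(8\tau^2))$ occur, from which the quantization of the eigenvalues and the bound $|\dot R(\tau)|=o(|\tau|^{-1})$ follow. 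Without an argument of this type (or a full analysis of the coupled $7$-dimensional quadratic system), the quantization step in your proposal does not go through.
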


This theorem, which we will prove via spectral analysis, shows that for $\tau\to -\infty$ the shape of $\bar{M}_\tau$ behaves in a highly specific way. Namely, to any ancient noncollapsed flow we can uniquely associate a symmetric $2\times 2$-matrix $Q=Q(\tau)$ whose eigenvalues are either $0$ or $-1/\sqrt{8}$ so that
 \begin{equation}\label{main_thm_ancient}
u(y,\theta,\tau)= \frac{y^\top Qy -2\textrm{tr}(Q)}{|\tau|} + o(|\tau|^{-1}).
\end{equation}
In particular, the flow becomes asymptotically $\mathrm{SO}(2)$-symmetric, and the hypersurfaces have an inwards quadratic bending shape over the $\mathbb{R}^2$-factor, where the bending coefficients are quantized to be either $0$ or $-1/\sqrt{8}$.\\

We remark that forwards in time related quantization behaviour, of course with the opposite sign, has been observed by Filippas-Liu for singularities of multidimensional semilinear heat equations \cite{FilippasLiu} and by Gang Zhou \cite{Zhou_dynamics} for certain cylindrical singularities under mean curvature flow.\\

According to Theorem \ref{spectral theorem} (bubble-sheet quantization), the problem of classifying general ancient noncollapsed flows in $\mathbb{R}^4$ naturally can be divided into the following three cases:
\begin{itemize}
\item the fully-degenerate case $\textrm{rk}(Q)=0$
\item the half-degenerate case $\textrm{rk}(Q)=1$
\item the non-degenerate case $\textrm{rk}(Q)=2$
\end{itemize}
We will now discuss these three cases in turn. Loosely speaking, the intuition is that directions in the range of $Q$ are short directions with inwards quadratic bending, while directions in the kernel of $Q$ are long directions.\\

In the fully-degenerate case $\textrm{rk}(Q)=0$ we prove:

\begin{theorem}[fully-degenerate case]\label{Rk=0}
If $\mathrm{rk}(Q)=0$, then $M_t$ is either a round shrinking $\mathbb{R}^2\times S^1$ or a translating $\mathbb{R}\times$2d-bowl. 
\end{theorem}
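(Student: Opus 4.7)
The plan is to use the fast decay $\|u(\cdot,\tau)\|_{C^k(B_R)}=o(|\tau|^{-1})$ provided by Theorem~\ref{spectral theorem} when $Q=0$ to force $M_t$ to split off a line in $\mathbb{R}^2$, and then to apply the Brendle-Choi and Angenent-Daskalopoulos-Sesum classification on the three-dimensional cross-section. The vanishing of $Q$ annihilates every quadratic neutral mode in the spectral decomposition of the linearized renormalized flow around $\Gamma=\mathbb{R}^2\times S^1(\sqrt{2})$; the only surviving neutral modes are those generated by isometries (translations in $\mathbb{R}^2$ and rotation of $S^1$), which can be absorbed by re-centering the graphing frame.

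\emph{Step 1 (improved decay).} First I would apply the Merle-Zaag ODE lemma to the Gaussian $L^2$-projections of $u$ onto the unstable, neutral, and stable subspaces of the linearized operator. The unstable contribution is suppressed by noncollapsedness, and once the isometric modes have been gauged away by adjusting translation/rotation parameters, the hypothesis $Q=0$ eliminates all remaining neutral contributions. The stable part therefore dominates, and combined with Ecker-Huisken type interior parabolic estimates one upgrades the $o(|\tau|^{-1})$ bound to genuine exponential decay $\|u(\cdot,\tau)\|_{C^k(B_R)}\le C(R,k)e^{\mu\tau}$ for some $\mu>0$.

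\emph{Step 2 (splitting).} The next step is to promote this decay to an exact translational symmetry of $M_t$ along some direction $e\in\mathbb{R}^2$. The natural route is a backwards-uniqueness / Carleman estimate of Angenent-Daskalopoulos-Sesum type, applied to the difference $u(y+se,\theta,\tau)-u(y,\theta,\tau)$: exponential decay at $\tau=-\infty$ supplies the vanishing Cauchy data needed to force the difference to vanish identically, yielding $\mathbb{R}$-translation symmetry. An alternative route, closer in spirit to \cite{CHH_translator}, combines the asymptotic $\mathrm{SO}(2)$-symmetry from Theorem~\ref{spectral theorem} with the no-wing result of \cite{CHH_wing}: a noncompact non-splitting flow has ray blowdowns of time-slices, which is incompatible with the rigidity imposed by $Q=0$; and a compact ancient noncollapsed flow with bubble-sheet tangent flow necessarily has $\mathrm{rk}(Q)=2$, so cannot occur here.

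\emph{Step 3 (cross-section).} Once $M_t=\mathbb{R}\times N_t$, the cross-section $N_t$ is an ancient noncollapsed flow in $\mathbb{R}^3$ whose tangent flow at $-\infty$ is the round neck. By Brendle-Choi \cite{BC1,BC2} and Angenent-Daskalopoulos-Sesum \cite{ADS1,ADS2}, $N_t$ must be the round shrinking cylinder, a $2$d translating bowl, or a $2$d ancient oval. The ancient oval is ruled out because its sharp ADS neck asymptotics contribute a quadratic bending term of order $|\tau|^{-1}$, and pulling this back to the bubble-sheet function of $\mathbb{R}\times N_t$ would produce a rank-one matrix $Q$, contradicting $\mathrm{rk}(Q)=0$. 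The remaining two possibilities yield exactly $M_t=\mathbb{R}^2\times S^1(\sqrt{2|t|})$ or $M_t=\mathbb{R}\times$2d-bowl, as asserted. The \textbf{main obstacle} is Step 2: carrying out the Carleman/backwards-uniqueness argument uniformly on an unbounded parabolic region that transitions from the near-cylindrical zone (where the linearized analysis is valid) to the soft tip region at spatial scales $\rho(\tau)\to\infty$, and ensuring that the exponential smallness obtained on compact balls actually suffices to kill the translation-difference globally.
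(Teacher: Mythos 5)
Your Step 2 contains the essential gap, and it is exactly where the new content of this theorem lies. The noncompact case is indeed a direct citation: $\mathrm{rk}(Q)=0$ (for a flow that is not the round shrinking cylinder) is equivalent to dominance of the unstable mode in the Merle--Zaag alternative, and then \cite[Theorem 1.10]{CHH_wing} gives $\mathbb{R}\times$2d-bowl. But the compact case cannot be dismissed by asserting that ``a compact ancient noncollapsed flow with bubble-sheet tangent flow necessarily has $\mathrm{rk}(Q)=2$'': that statement is not proved anywhere you can cite (this paper only proves compactness \emph{given} $\mathrm{rk}(Q)=2$, and the compact $\mathrm{rk}(Q)=1$ case is left to forthcoming work), and ruling out a compact flow with $\mathrm{rk}(Q)=0$ is precisely what Theorem \ref{Rk=0} adds beyond \cite{CHH_wing}. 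Your Carleman/backwards-uniqueness route does not fill this hole either: exponential decay of $u$ as $\tau\to-\infty$ does not give vanishing Cauchy data for the translation-difference $u(y+se,\theta,\tau)-u(y,\theta,\tau)$ --- the $\mathbb{R}\times$2d-bowl itself has exponentially decaying bubble-sheet function (with fine asymptotics $e^{\tau/2}(a_1y_1+a_2y_2)$) yet is not translation invariant in the $(a_1,a_2)$-direction --- and for a hypothetical compact flow there is no direction of splitting at all, so no uniqueness argument can ``force'' one; the compact scenario must be contradicted, not symmetrized away.

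The paper's actual argument for the compact case is quite different and worth internalizing: by \cite[Theorems 6.7 and 6.9]{CHH_wing}, a flow with dominant unstable mode carries a universal nonvanishing fine bubble-sheet vector $(a_1,a_2)$, independent of the center point, governing the $e^{\tau/2}$-asymptotics of $u^X$ for every space-time center $X$. Assuming a compact example, one takes tip points $p^\pm_{t_i}$ maximizing/minimizing $x_1$ on $M_{t_i}$, shows their bubble-sheet scales are uniformly bounded (a compactness argument using the global convergence theorem and the quantization theorem to exclude limits with $\mathrm{rk}(Q^\infty)\geq1$ or cylindrical limits), and passes to translated limits $\mathcal{M}^\pm$, which are noncompact with dominant unstable mode and hence $\mathbb{R}\times$2d-bowls by \cite[Theorem 1.10]{CHH_wing}; by the choice of tips these bowls translate in opposite $x_1$-directions, so their fine bubble-sheet vectors oppose each other, contradicting the universality of $(a_1,a_2)$. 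As a smaller point, your Step 1 mechanism is misstated: in the $Q=0$ case it is the unstable (plus) mode that is \emph{dominant} (not suppressed by noncollapsedness), and the exponential decay comes from integrating $\dot U_+\geq\tfrac12 U_+$ backwards in time, not from dominance of the stable part; your Step 3 reduction on the cross-section, by contrast, is fine and matches the logic used in Section \ref{sec_half_deg}.
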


This generalizes a prior result by Choi, Hershkovits and the second author \cite{CHH_wing}, where it has been shown that any noncompact ancient noncollapsed flows in $\mathbb{R}^4$ whose tangent flow at $-\infty$ is given by \eqref{bubble-sheet_tangent_intro} and for which in the Merle-Zaag alternative, c.f. \cite{MZ}, the unstable mode is dominant, must be $\mathbb{R}\times$2d-bowl. Here, we rule out the potential scenario that there is any compact ancient noncollapsed flow in $\mathbb{R}^4$ with $\textrm{rk}(Q)=0$.\\

Next, in the half-degenerate case $\textrm{rk}(Q)=1$ we have:

\begin{theorem}[half-degenerate case]\label{thm_halfdeg}
If $\mathrm{rk}(Q)=1$, and if $M_t$ either splits off a line or is selfsimilarly translating, then $M_t$ is either $\mathbb{R}\times$2d-oval or belongs to the one-parameter family of 3d oval-bowls constructed by Hoffman-Ilmanen-Martin-White, respectively.
\end{theorem}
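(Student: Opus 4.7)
The plan is to split the hypothesis into its two subcases and reduce each to an existing classification, using the rank-one constraint on $Q$ together with Theorem~\ref{Rk=0} to prune away the incompatible possibilities.

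In the splitting subcase, write $M_t = \mathbb{R} \times N_t$ for an ancient mean curvature flow $N_t \subset \mathbb{R}^3$. A standard projection argument shows that $N_t$ inherits noncollapsedness from $M_t$, and stripping one $\mathbb{R}$-factor off the bubble-sheet \eqref{bubble-sheet_tangent_intro} identifies the tangent flow of $N_t$ at $-\infty$ with the round shrinking cylinder $\mathbb{R} \times S^1(\sqrt{2|t|})$. The Brendle--Choi / Angenent--Daskalopoulos--Sesum classification \cite{BC1,BC2,ADS1,ADS2} then forces $N_t$ to be the round cylinder, the 2d translating bowl, or a 2d ancient oval. The first two options would give $M_t$ equal to the round shrinking $\mathbb{R}^2 \times S^1$ or the $\mathbb{R}\times$2d-bowl respectively, and both have $\mathrm{rk}(Q)=0$ by Theorem~\ref{Rk=0}; hence $\mathrm{rk}(Q)=1$ leaves only $N_t = $ 2d ancient oval, i.e., $M_t = \mathbb{R}\times$2d-oval.

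In the selfsimilarly translating subcase, the recent classification in \cite{CHH_translator} shows that any such $M_t$ is either the $\mathbb{R}\times$2d-bowl, the 3d round bowl, or one of the $\mathbb{Z}_2 \times \mathrm{O}(2)$-symmetric HIMW 3d oval-bowls \cite{HIMW}. The 3d round bowl is excluded because its tangent flow at $-\infty$ is $\mathbb{R}\times S^2$, not a bubble-sheet \eqref{bubble-sheet_tangent_intro}, and the $\mathbb{R}\times$2d-bowl is excluded because it has $\mathrm{rk}(Q)=0$ by Theorem~\ref{Rk=0}. Thus $\mathrm{rk}(Q)=1$ forces $M_t$ to be an HIMW 3d oval-bowl, as claimed.

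The heavy analytic lifting is done entirely by the cited classification theorems; the contribution of the present paper here is to provide, via Theorem~\ref{spectral theorem}, the matrix-valued invariant $Q$ whose rank neatly separates the relevant families. The main point that requires a careful check is that the rank of the bubble-sheet matrix really does behave additively under a $\mathbb{R}\times N_t$ splitting, so that the two non-oval members of the 2d classification genuinely correspond to the $\mathrm{rk}(Q)=0$ examples covered by Theorem~\ref{Rk=0}. This comes out of Theorem~\ref{spectral theorem} applied to the product: the split $\mathbb{R}$-factor contributes a zero eigenvalue to $Q$, while the known ADS asymptotics for the 2d ancient oval contribute precisely the eigenvalue $-1/\sqrt{8}$ in the remaining $\mathbb{R}^2$-direction.
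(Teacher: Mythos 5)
Your overall route is the same as the paper's: split according to the two hypotheses, invoke the Brendle--Choi/Angenent--Daskalopoulos--Sesum classification \cite{BC1,BC2,ADS1,ADS2} in the splitting case and the translator classification \cite{CHH_translator} in the selfsimilar case, and eliminate every alternative except the 2d-oval factor, respectively the HIMW family \cite{HIMW}. The one genuine gap is how you eliminate the round shrinking $\mathbb{R}^2\times S^1$ and the translating $\mathbb{R}\times$2d-bowl: you assert that these flows have $\mathrm{rk}(Q)=0$ ``by Theorem~\ref{Rk=0}'', but that theorem states the implication in the opposite direction ($\mathrm{rk}(Q)=0$ implies the flow is one of these two); it says nothing about what the matrix $Q$ of these two specific flows actually is. Since by Theorem~\ref{spectral theorem} each flow carries a uniquely determined $Q$, you must identify it directly: for the round shrinking $\mathbb{R}^2\times S^1$ one has $u\equiv 0$, and for $\mathbb{R}\times$2d-bowl (and likewise $\mathbb{R}\times(\mathbb{R}\times S^1)$) the renormalized flow converges to the bubble-sheet exponentially fast, the deviation being carried by the unstable mode of order $e^{\tau/2}$, so in both cases $|\tau|u\to 0$ and hence $Q=0$, contradicting $\mathrm{rk}(Q)=1$. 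This short observation is exactly how the paper disposes of these cases, and the same repair is needed in your translating subcase, where you again cite Theorem~\ref{Rk=0} to discard $\mathbb{R}\times$2d-bowl. Note that trying instead to argue ``if $\mathbb{R}\times$2d-bowl had rank one it would be $\mathbb{R}\times$2d-oval by the present theorem'' would be circular, so the exponential-convergence fact is genuinely what is required.

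A smaller remark: your closing paragraph about rank additivity under the splitting and the eigenvalue $-1/\sqrt{8}$ coming from the ADS oval asymptotics is the consistency check that $\mathbb{R}\times$2d-oval (and, via \cite[Theorem 3.6]{CHH_translator}, the HIMW oval-bowls) indeed realize $\mathrm{rk}(Q)=1$; the paper records this using \cite[Theorem 1.6]{ADS1}. It is a worthwhile verification that the rank-one case is nonvacuous, but it is not needed for the implication stated in the theorem and does not substitute for the exclusion discussed above.
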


In fact, this is a rather direct consequence of of the recent work of Angenent, Brendle, Choi, Daskalopoulos, Hershkovits, Sesum and the second author \cite{ADS1,ADS2,BC1,CHH_wing,CHH_translator}. For convenience of the reader we include a proof explaining how this follows. The general case $\textrm{rk}(Q)=1$, without assuming selfsimilarity, will be addressed in forthcoming work by Choi, Hershkovits and the second author.\\

Finally, let us discuss the non-degenerate case $\textrm{rk}(Q)=2$. The simplest example in this case are the $\mathrm{O}(2)\times \mathrm{O}(2)$-symmetric ancient ovals constructed by Hershkovits and the second author \cite{HaslhoferHershkovits_ancient}. In our recent paper \cite{DH_ovals}, we proved that these solutions are unique  among $\mathrm{SO}(2)\times \mathrm{SO}(2)$-symmetric compact ancient noncollapsed flows, as conjectured by Angenent-Daskalopoulos-Sesum. On the other hand, in the same paper we
 also constructed a one-parameter family of ancient ovals that are only $\mathbb{Z}_2\times \mathrm{O}(2)$-symmetric. These ovals have long axes of different length, and interpolate between $\mathbb{R}\times$2d-oval and 2d-oval$\times\mathbb{R}$, and can be thought of as a compact version of the Hoffman-Ilmanen-Martin-White examples.

Here, we prove that any ancient noncollapsed flow in $\mathbb{R}^4$ with $\mathrm{rk}(Q)=2$ is compact and $\mathrm{SO}(2)$-symmetric and for $\tau\to -\infty$ has the same sharp asymptotics as the $\mathrm{SO}(2)\times \mathrm{SO}(2)$-symmetric ancient ovals from \cite{HaslhoferHershkovits_ancient}:

\begin{theorem}[non-degenerate case]\label{Rk=2}
If $\mathrm{rk}(Q)=2$, then $M_t$ is compact and $\mathrm{SO}(2)$-symmetric and satisfies the following sharp asymptotics:
\begin{itemize}
    \item Parabolic region: The bubble-sheet function $u$ for $\tau\to -\infty$ satisfies
    \begin{equation*}
     u(y_1,y_2,\theta,\tau)=\frac{y_1^2+y_2^2-4}{\sqrt{8}\tau} +o(|\tau|^{-1})
    \end{equation*}
        uniformly for $|(y_1,y_2)|\leq R$.
      \item Intermediate region: We have
\begin{equation*}
    \lim_{\tau\rightarrow -\infty}u(|\tau|^{\frac{1}{2}}z_1,|\tau|^{\frac{1}{2}}z_2,\theta, \tau)+\sqrt{2}=\sqrt{2-(z_1^2+z_2^2)}
\end{equation*}
    uniformly on every compact subset of $\{ z_1^2+z_2^2<\sqrt{2}\}$.
    \item Tip region:  Setting $\lambda(s)=\sqrt{|s|^{-1}\log |s|}$, and given any angle $\phi$ letting $p_{s}\in M_s$ be the point that maximizes $\langle p, \cos(\phi) e_1 + \sin(\phi) e_2\rangle$ among all $p\in M_s$, as $s\to -\infty$ the rescaled flows 
    \begin{equation*}
        {\widetilde{M}}^{s}_{t}=\lambda(s)\cdot(M_{s+\lambda(s)^{-2}t}-p_{s})
    \end{equation*}
    converge to $\mathbb{R}\times N_{t}$, where $N_{t}$ is the 2d-bowl in $\mathbb{R}^{3}$ with speed $1/\sqrt{2}$.
\end{itemize}
\end{theorem}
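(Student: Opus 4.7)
The starting point is Theorem \ref{spectral theorem}. Since $\mathrm{rk}(Q)=2$ forces both eigenvalues of $Q$ to be nonzero, both must equal $-1/\sqrt{8}$, so $Q=-\tfrac{1}{\sqrt{8}}\,I_{2\times 2}$. Substituting into \eqref{main_thm_ancient} yields
\[
u(y_1,y_2,\theta,\tau)=\frac{y_1^2+y_2^2-4}{\sqrt{8}\,\tau}+o(|\tau|^{-1})
\]
uniformly on $|y|\le R$, which establishes the parabolic region asymptotic. Moreover, because $Q$ is a scalar multiple of the identity, the leading-order profile is already $\mathrm{SO}(2)$-symmetric in the $\mathbb{R}^{2}$-plane of the bubble-sheet.

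\medskip

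To establish compactness I plan to argue by contradiction. Suppose $M_t$ is noncompact. If $M_t$ splits off a line, then the line must lie in the $\mathbb{R}^2$-factor of the bubble-sheet tangent flow, which would force $Q$ to have a zero eigenvalue in that direction and contradict $\mathrm{rk}(Q)=2$. Otherwise, by the analysis of noncompact noncollapsed bubble-sheet flows in \cite{CHH_wing}, the blowdown $\check M_{t_0}$ of every time slice must be a single ray, but the strictly negative-definite quadratic bending encoded in $Q$ predicts that the hypersurface closes off by the scale $|y|\sim|\tau|^{1/2}$, contradicting the existence of an unbounded blowdown ray. Hence $M_t$ is compact.

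\medskip

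Next I promote the asymptotic rotational invariance to an exact $\mathrm{SO}(2)$-symmetry. Because $Q$ is a scalar matrix, every non-$\mathrm{SO}(2)$-invariant mode of the spectral expansion is of order $o(|\tau|^{-1})$ in each $C^k(B_R)$. Combining this with a Neumann-type moving-plane argument in the spirit of Brendle--Choi and a continuous-family uniqueness argument, one propagates the approximate symmetry from the parabolic region outward through the intermediate and tip regions to conclude that $M_t$ is exactly $\mathrm{SO}(2)$-symmetric for all $t$.

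\medskip

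Given compactness and exact $\mathrm{SO}(2)$-symmetry, the intermediate and tip region asymptotics follow from matched asymptotic analysis in the style of \cite{ADS1,HaslhoferHershkovits_ancient,DH_ovals}, adapted to the $4$-dimensional bubble-sheet setting. In the intermediate region, the rescaling $z_i=y_i/|\tau|^{1/2}$ converts the bubble-sheet equation for $u$ into a PDE whose unique self-similar profile compatible with the parabolic matching condition is the shrinking cap of radius $\sqrt 2$, giving $u+\sqrt 2\to\sqrt{2-|z|^2}$ on compact subsets of $\{|z|^2<\sqrt 2\}$. In the tip region, the rescaling $\lambda(s)=\sqrt{|s|^{-1}\log|s|}$ around an extremal point $p_s$ produces a smooth convex ancient noncollapsed limit; the $\mathrm{SO}(2)$-symmetry forces this limit to split off an $\mathbb{R}$-factor, and the Brendle--Choi classification applied to the remaining $2$d factor identifies it as the $2$d-bowl, with speed $1/\sqrt 2$ fixed by matching with the intermediate-region cap of radius $\sqrt{2}$.

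\medskip

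I expect the main obstacle to be the step of upgrading the approximate $\mathrm{SO}(2)$-symmetry in the parabolic region to exact symmetry of the global flow. The spectral estimate of Theorem \ref{spectral theorem} only controls $\bar M_\tau$ in a fixed ball $B_R$, and extending rotational invariance outward through the intermediate region (where the cap profile degenerates at its boundary) and into the tip region requires careful barrier and reflection arguments coupled to the compactness step. Once this is achieved, the intermediate and tip region asymptotics follow from well-established matched-asymptotics technology.
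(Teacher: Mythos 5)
Your parabolic-region step and the deduction $Q=-\tfrac{1}{\sqrt 8}I$ are exactly as in the paper, and your compactness argument is in the right spirit (the paper gets it more directly: the inward quadratic bending from the parabolic asymptotics together with convexity already forces compactness). The genuine gaps are in the remaining three items, and they are structural. First, your plan is circular in its ordering: you propose to prove exact $\mathrm{SO}(2)$-symmetry before the intermediate and tip asymptotics and then deduce those asymptotics from the symmetry, but the only available mechanism for exact symmetry in this setting is the quantitative symmetry-improvement scheme (Brendle--Choi/ADS/Zhu), which requires a canonical neighborhood structure --- every point at very negative times must be close to a round shrinking $\mathbb{R}^2\times S^1$ or to a piece of a translating $\mathrm{Bowl}_2\times\mathbb{R}$ --- and establishing the cap-type neighborhoods is precisely what the tip-region asymptotics are for. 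The paper therefore proves the sharp asymptotics \emph{without} any exact symmetry: the intermediate lower bound via the rotated ADS shrinkers as inner barriers anchored in the parabolic region, a quantitative almost-symmetry estimate (Zhu's bubble-sheet improvement) valid out to radius $\sqrt{(2-\delta)|\tau|}$, and then a supersolution/ODE argument for $w=v^2-2$ with exponentially small error terms for the upper bound; only afterwards come canonical neighborhoods and the iteration of Zhu's bubble-sheet and cap improvement theorems. Your proposed ``Neumann-type moving-plane argument'' is not the method used anywhere in this literature for this purpose, and you give no mechanism for starting a reflection argument at the tips or through the degenerating intermediate profile, which you yourself identify as the main obstacle --- as written this step is missing, not merely deferred.

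Second, your tip-region argument misidentifies the symmetry. The exact $\mathrm{SO}(2)$-symmetry asserted in the theorem (and proved in the paper) is rotation of the circle factor, i.e.\ in the $x_3x_4$-plane; it is \emph{not} a rotation symmetry of the $(y_1,y_2)$-plane --- indeed the expected $\mathbb{Z}_2\times\mathrm{O}(2)$-symmetric ovals of \cite{DH_ovals} have $\mathrm{rk}(Q)=2$ but are not rotationally symmetric in that plane, so only the leading-order profile is. Consequently ``the $\mathrm{SO}(2)$-symmetry forces this limit to split off an $\mathbb{R}$-factor'' is unjustified under either reading: in the paper the splitting of the line in the tip blowup comes from a wedge/angle argument (convexity plus the intermediate asymptotics show the rescaled limit contains a wedge) combined with the classification of noncompact ancient noncollapsed flows in $\mathbb{R}^4$ containing a wedge from \cite{CHH_wing}, and the speed $1/\sqrt 2$ is pinned down by Hamilton's Harnack inequality (monotonicity of $H(p_t)$) together with the diameter asymptotics $d(t)=\sqrt{2|t|\log|t|}(1+o(1))$, not merely by formal matching. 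So while your outline reproduces the easy part of the theorem, the intermediate region upper bound, the tip region, and the exact symmetry each need the concrete ingredients above, and your proposed route to symmetry would not get off the ground.
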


For necks and certain bubble-sheets related sharp asymptotics have been upgraded to classification results in \cite{ADS2,DH_ovals,CHH_translator}. In light of these prior results it seems likely that our sharp asymptotics from Theorem \ref{Rk=2} (non-degenerate case) can be upgraded to show that any ancient noncollapsed flow in $\mathbb{R}^4$ with $\mathrm{rk}(Q)=2$ is either the $\mathrm{O}(2)\times \mathrm{O}(2)$-symmetric ancient oval from \cite{HaslhoferHershkovits_ancient} or belongs to the one-parameter family of $\mathbb{Z}_2\times \mathrm{O}(2)$-symmetric ovals from \cite{DH_ovals}. We will address this in subsequent work.

\bigskip

\subsection{Outline of the proofs}
Let us now outline the main ideas.

In Section \ref{sec_fine_bubble_sheet}, we set up the fine bubble-sheet analysis. This is essentially a bubble-sheet version of what has been done for necks in \cite[Section 4]{CHH}. Specifically, since $\bar{M}_\tau$ moves by renormalized mean curvature flow, the evolution of the bubble-sheet function $u(\cdot,\tau)$ over $\Gamma=\mathbb{R}^2\times S^1(\sqrt{2})$, as defined in \eqref{graph_over_cylinder}, is governed by the Ornstein-Uhlenbeck type operator
\begin{equation}
\mathcal L=\frac{\partial^2}{\partial y_1^2}-\frac{y_1}{2}\frac{\partial}{\partial y_{1}}
+\frac{\partial^2}{\partial y_2^2}-\frac{y_2}{2}\frac{\partial}{\partial y_{2}}
+\frac{1}{2}\frac{\partial^2}{\partial \theta^2}+1\, .
\end{equation}
This operator has 5 unstable eigenfunctions, namely
\begin{align}
1, y_1, y_{2}, \cos\theta, \sin\theta\, ,\label{basis_hplus_intro}
\end{align}
and 7 neutral eigenfunctions, namely
\begin{align}
y^2_{1}-2, y^2_{2}-2, y_{1}y_{2}, y_{1}\cos\theta, y_{1}\sin\theta,  y_{2}\cos\theta, y_{2}\sin\theta \, ,
\label{basis_hneutral_intro}
\end{align}
and all the other eigenfunctions are stable. By the Merle-Zaag alternative from \cite{MZ}, for $\tau\to -\infty$ either the unstable eigenfunctions are dominant or the neutral eigenfunctions are dominant. In the unstable case it is not hard to see that the function $u$ decays exponentially, and thus the spectral quantization theorem holds with $Q=0$. Hence, we can focus on the case where the neutral eigenfunctions are dominant. Using the Lojasiewicz inequality from Colding-Minicozzi \cite{CM_uniqueness} and barrier arguments, we then show that $\rho(\tau)=|\tau|^{\gamma}$, for some $\gamma>0$, is an admissible graphical radius, i.e.
\begin{equation}\label{small_graph_intro}
\|u(\cdot,\tau)\|_{C^4(\Gamma \cap B_{2\rho(\tau)}(0))} \leq  \rho(\tau)^{-2}.
\end{equation}
Moreover, using Zhu's bubble-sheet  improvement theorem from \cite{Zhu} we show that the central region of the hypersurfaces is almost $\mathrm{SO}(2)$-symmetric,  specifically that there is some $\eta>0$ such that for all $\tau\ll 0$ we have
\begin{equation}\label{eq_alm_symm_intro}
  \sup_{|(y_1,y_2)|\leq \rho(\tau)}   |u_{\theta}(y_1,y_2,\theta,\tau)|\leq e^{-\eta \rho(\tau)} \, .
\end{equation}
This has the important consequence that out of the $7$ eigenfunctions listed in \eqref{basis_hneutral_intro} only the first 3 can be dominant. Moreover, it also implies that if we Taylor expand the evolution of the truncated bubble-sheet function
\begin{equation}
\hat{u}(y_1,y_2,\theta,\tau):=u(y_1,y_2,\theta,\tau) \chi\left(\frac{|(y_1,y_2)|}{\rho(\tau)}\right) \, ,
\end{equation}
where $\chi$ is a suitable cutoff function, then to second order we have
\begin{equation}\label{sec_taylor_intro}
 \partial_{\tau}\hat{u}=\mathcal{L}\hat{u}-\frac{1}{\sqrt{8}}\hat{u}^2+\ldots\, ,
\end{equation}
up to controllable error terms.\\

In Section \ref{sec_quant_thm}, we prove Theorem \ref{spectral theorem} (bubble-sheet quantization). To this end, we consider the expansion
\begin{equation}\label{expansion_123}
 \hat{u} = \alpha_1(y^2_{1}-2)+\alpha_2(y^2_{2}-2)+ 2\alpha_3y_{1}y_{2}+w\, ,
 \end{equation}
 where the remainder term $w$ is controllable thanks to the assumption that the neutral eigenfunctions from \eqref{basis_hneutral_intro} are dominant and thanks to the almost circular symmetry from \eqref{eq_alm_symm_intro}.
Taking also into account \eqref{sec_taylor_intro} we then show that the spectral coefficients $\vec{\alpha}=(\alpha_1,\alpha_2,\alpha_3)$ evolve by
\begin{equation}\label{odes0_intro}
 \begin{cases}
   \dot{\alpha}_{1}=-\sqrt{8}(\alpha^2_{1}+\alpha_{3}^2)+o(|\vec{\alpha}|^2+|\tau|^{-100})\\
   \dot{\alpha}_{2}=-\sqrt{8}(\alpha^2_{2}+\alpha_{3}^2)+o(|\vec{\alpha}|^2+|\tau|^{-100})\\
    \dot{\alpha}_{3}=-\sqrt{8}(\alpha_{1}+\alpha_2)\alpha_{3}+o(|\vec{\alpha}|^2+|\tau|^{-100})\, .
    \end{cases}
\end{equation}
These spectral ODEs are rather tricky to analyze. To get a hold on them, we start by establishing the  a priori estimates
\begin{equation}
\alpha_1 \leq o(|\vec{\alpha}|+|\tau|^{-100})\, , \quad \alpha_2 \leq o(|\vec{\alpha}|+|\tau|^{-100})\, ,
\end{equation}
and
\begin{equation}
\alpha_3^2 - \alpha_1\alpha_2 \leq o(|\vec{\alpha}|^2+|\tau|^{-100})\, .
\end{equation}
In essence, these estimates come from convexity, similarly as in \cite{CHH_wing}, though some care is needed to handle the rotations and error terms.
We then consider the trace and determinant,
\begin{equation}
    S:=\alpha_{1}+\alpha_{2}\, ,\quad D:=\alpha_{1} \alpha_{2} -{\alpha_{3}^{2}} \, ,
\end{equation}
and show that they evolve by 
\begin{equation}\label{Riccati ODEs_intro}
    \begin{cases}
      \dot{S}=-\sqrt{8}(S^2-2D)+o(|\tau|^{-2})\, ,\\
       \dot{D}=-\sqrt{8}SD+o(|\tau|^{-3})\, .
    \end{cases}
    \end{equation}
   Moreover, using in particular our a priori estimates we prove that
     \begin{equation}
   \frac{1+o(1)}{\sqrt{2}\tau} \leq S\leq  \frac{1-o(1)}{\sqrt{8}\tau}\, , \qquad 
 -\frac{o(1)}{\tau^2} \leq D \leq  \frac{1+o(1)}{8\tau^2}\, .
    \end{equation}
    Then, carefully analyzing the Riccati type ODEs from \eqref{Riccati ODEs_intro} we prove the crucial result that actually only the extremal cases can occur. Namely, we show that the trace and determinant satisfy either
\begin{equation}\label{case1_intro}
S=\frac{1}{\sqrt{8}\tau}+o(|\tau|^{-1})\, , \quad
D=o(|\tau|^{-2})\, ,
\end{equation}
or
    \begin{equation}
\label{case2_intro}
S=\frac{1}{\sqrt{2}\tau}+o(|\tau|^{-1})\, ,\quad 
D=\frac{1}{8\tau^2}+o(|\tau|^{-2})\, .
\end{equation}
To convey the intuition behind this dichotomy, let us point out that scenario \eqref{case1_intro} corresponds to the case $\mathrm{rk}(Q)=1$ and occurs for example for the 3d oval-bowls, while scenario \eqref{case2_intro} corresponds to the case $\mathrm{rk}(Q)=2$ and occurs for example for the $\mathrm{O}(2)\times\mathrm{O}(2)$-symmetric ancient ovals. Finally, having established the behavior of the trace and determinant, we can eventually solve the original spectral ODEs \eqref{odes0_intro}, and prove that the conclusion
 \begin{equation}\label{main_thm_ancient_rest}
\lim_{\tau\to -\infty} \Big\|\,
|\tau| u(y,\theta,\tau)- y^\top Qy +2\mathrm{tr}(Q)\, \Big\|_{C^{k}(B_R)} = 0
\end{equation}
 holds with
\begin{equation}\label{eq_mat_q}
Q=
R(\tau)^\top \begin{pmatrix}
0 & 0\\
0 & -1/\sqrt{8}
\end{pmatrix} R(\tau),
\end{equation}
for some $R(\tau)\in \mathrm{SO}(2)$ satisfying $|\dot{R}(\tau)|=o(|\tau|^{-1})$,
respectively with
\begin{equation}\label{eq_mat_q}
Q=
\begin{pmatrix}
-1/\sqrt{8} & 0\\
0 & -1/\sqrt{8}\ 
\end{pmatrix}.
\end{equation}
This completes our outline of the proof of the bubble-sheet quantization theorem.\\

In Section \ref{sec_full_deg}, we prove Theorem \ref{Rk=0} (fully-degenerate case). As discussed, it suffices to prove that any solution with $\mathrm{rk}(Q)=0$ must be noncompact. To show this, we use that by the fine bubble-sheet theorem from \cite{CHH_wing} there is a nonvanishing fine bubble-sheet vector $(a_{1}, a_{2})$ associated to our flow, such that for any space-time point $X$ after suitable recentering in the $x_3x_4$-plane the profile function $u^X$ of the renormalized flow $\bar{M}_\tau^X$ centered at $X$ satisfies
\begin{equation}
u^X = e^{\tau/2}(a_1y_1+a_2y_2) + o(e^{\tau/2})
\end{equation}
for all $\tau\ll 0$ depending only on the bubble-sheet scale. Supposing towards a contradiction that there is a compact example with $\mathrm{rk}(Q)=0$ we then blow up around suitable tip points that maximize respectively minimize the value of $x_1$, and argue that the two blowup limits must have fine bubble-sheet vectors pointing in opposing directions. This contradicts the fact that $(a_{1}, a_{2})$ is independent of the center point, and thus concludes the outline of the proof.\\

In Section \ref{sec_half_deg}, we prove Theorem \ref{thm_halfdeg} (half-degenerate case).  Specifically, in case the flow splits off a line, then using the classification from \cite{BC1,ADS1,ADS2} we show that it must be $\mathbb{R}\times$2d-oval, and in case the flow is selfsimilarly translating, then using the classification from \cite{CHH_translator} we show that it belongs to the one-parameter family of 3d oval-bowls.\\

Finally, in Section \ref{sec_non_deg}, we prove Theorem \ref{Rk=2} (non-degenerate case). We first establish the sharp asymptotics by generalizing the arguments from our prior paper \cite{DH_ovals} to the setting without symmetry assumptions. In particular, to establish the sharp asymptotics in the intermediate region we first establish the lower bound using interior barriers anchored in the central region, next use again Zhu's bubble-sheet improvement to prove almost symmetry up to renormalized radius $(\sqrt{2}-o(1))\sqrt{|\tau|}$, and then generalize the supersolution estimate from our prior paper to this setting with error terms to establish the upper bound. In particular, the sharp asymptotics imply that the hypersurfaces must be compact. Having established the sharp asymptotics we show that at all sufficiently negative times every point in our solution has a canonical neighborhood modelled either on a round shrinking $\mathbb{R}^2\times S^1$ or on a translating $\mathbb{R}\times$2d-bowl. Finally, similarly as in \cite{BC1,ADS2,Zhu} we establish $\mathrm{SO}(2)$-symmetry via bubble-sheet improvement and cap improvement. This concludes the outline of the proof.\\

\bigskip

\noindent\textbf{Acknowledgments.}
This research was supported by the NSERC Discovery Grant and the Sloan Research Fellowship of the second author.

\bigskip

\section{Fine bubble-sheet analysis and almost symmetry}\label{sec_fine_bubble_sheet}
In  this section, we set up the fine bubble-sheet analysis and show that the central region is $\textrm{SO}(2)$-symmetric up to exponential error terms. This is essentially a bubble-sheet version of what has been done for necks in \cite[Section 4]{CHH}. A related bubble-sheet analysis has been done in \cite{CHH_wing}. However, in contrast to the setup from \cite{CHH_wing}, where a fine-tuning rotation, similarly as in  \cite{BC1}, has been used to project away from certain rotations, for our present purpose it is better to instead use the Lojasiewicz inequality from \cite{CM_uniqueness}, similarly as in \cite{CHH}, to control the rotations.\\

Throughout this section, we will need some general facts about barriers, which we will recall now. By \cite[Section 4]{ADS1} there is some $L_0>1$ such that  for every $a\geq L_0$ there is a shrinker-with-boundary in $\mathbb{R}^3$,
\begin{align}\label{ads_shrinker}
{\Sigma}_a &= \{ \textrm{surface of revolution with profile } r=u_a(y_1), 0\leq y_1 \leq a\}\, .
\end{align}
Here, the parameter $a$ captures where the  concave functions $u_a$ meet the $y_1$-axis, namely $u_a(a)=0$. These shrinkers foliate the region enclosed by the cylinder $\mathbb{R}\times S^1(\sqrt{2})$ for $|y_1|\geq L_0$. In \cite[Section 3]{CHH_wing} the 2d ADS-shrinkers have been shifted and rotated to construct the 3d hypersurfaces
\begin{equation}\label{rotated_barrier}
\Gamma_a=\{(r\cos\theta ,r\sin\theta,y_3,y_4)\in  \mathbb{R}^4:\theta\in [0,2\pi), (r-1,y_3,y_4) \in {\Sigma}_a \}.
\end{equation}
By construction the hypersurfaces $\Gamma_a$ foliate the region enclosed by the cylinder $\Gamma=\mathbb{R}^2\times S^1(\sqrt{2})$ for $|(y_1,y_2)|\geq L_0+1$, and by  \cite[Corollary 3.4]{CHH_wing} they act as inner barriers for the renormalized mean curvature flow. 

\bigskip

\subsection{Basic bubble-sheet setup} Our basis setup is similar as in \cite[Section 4]{CHH_wing} with the difference that we now work without fine-tuning rotation. For convenience of the reader we give an essentially self-contained exposition.

Let $M_t$ be an ancient noncollapsed mean curvature flow in $\mathbb{R}^{4}$, whose tangent flow at $-\infty$ in suitable coordinates is
\begin{equation}\label{bubble-sheet_tangent}
\lim_{\lambda \rightarrow 0} \lambda M_{\lambda^{-2}t}=\mathbb{R}^{2}\times S^{1}(\sqrt{2|t|}).
\end{equation}
In other words, the renormalized mean curvature flow,
\begin{equation}
\bar M_\tau = e^{\frac{\tau}{2}}  M_{-e^{-\tau}},
\end{equation}
converges for $\tau\to -\infty$ to the bubble-sheet
\begin{equation}
\Gamma=\mathbb{R}^2\times S^{1}(\sqrt{2}).
\end{equation}
Assume further that $M_t$ is not a round shrinking cylinder.
Let us fix some admissible graphical radius function $\rho(\tau)$ for $\tau\ll 0$, namely a positive function satisfying
\begin{equation}\label{univ_fns}
\lim_{\tau \to -\infty} \rho(\tau)=\infty, \quad \textrm{and}\quad  -\rho(\tau) \leq \rho'(\tau) \leq 0,
\end{equation}
so that $\bar M_\tau$ can be written as the graph of a function $u(\cdot,\tau)$ over $\Gamma\cap B_{2\rho(\tau)}$ with the estimate
\begin{equation}\label{small_graph}
\|u(\cdot,\tau)\|_{C^4(\Gamma \cap B_{2\rho(\tau)}(0))} \leq  \rho(\tau)^{-2}.
\end{equation}
Since $\bar{M}_\tau$ moves by renormalized mean curvature flow, the bubble-sheet function $u$ evolves by
 \begin{equation}
      \partial_{\tau}u=\mathcal{L}u+E,
 \end{equation}
where $\mathcal L$ is an Ornstein-Uhlenbeck type operator on $\Gamma=\mathbb{R}^2\times S^{1}(\sqrt{2})$ explicitly given  by
\begin{equation}
\mathcal L=\frac{\partial^2}{\partial y_1^2}-\frac{y_1}{2}\frac{\partial}{\partial y_{1}}
+\frac{\partial^2}{\partial y_2^2}-\frac{y_2}{2}\frac{\partial}{\partial y_{2}}
+\frac{1}{2}\frac{\partial^2}{\partial \theta^2}+1,
\end{equation}
and where the error term thanks to \eqref{small_graph} satisfies the pointwise estimate
\begin{equation}\label{error_first_untrunc}
    |E|\leq C\rho^{-2}(|u|+|\nabla u|).
\end{equation}
Denote by $\mathcal{H}$ the Hilbert space of Gaussian $L^2$ functions on $\Gamma$, where
\begin{equation}\label{def_norm}
\langle f, g\rangle_{\mathcal{H}}= \frac{1}{(4\pi)^{3/2}} \int_{\Gamma}  f(q)g(q) e^{-\frac{|q|^2}{4}} \, dq\, .
\end{equation}
We also fix a nonnegative smooth cutoff function $\chi$ satisfying $\chi(s)=1$ for $|s| \leq \tfrac{1}{2}$ and $\chi(s)=0$ for $|s| \geq 1$, and consider the truncated function
\begin{equation}
\hat{u}(y_1,y_2,\theta,\tau):=u(y_1,y_2,\theta,\tau) \chi\left(\frac{|(y_1,y_2)|}{\rho(\tau)}\right).
\end{equation}
\begin{proposition}[{truncated evolution, cf. \cite[Proposition 4.6]{CHH_wing}}]\label{prop_trunc_bubble}
The truncated bubble-sheet function $\hat{u}$ satisfies
\begin{equation}\label{eq_truncated}
\left\|(\partial_\tau -\mathcal{L})\hat{u} \right\|_{\mathcal{H}}\leq C\rho^{-1} \| \hat{u} \|_{\mathcal{H}}.
\end{equation}
\end{proposition}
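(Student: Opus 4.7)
The plan is to expand $(\partial_\tau - \mathcal L)\hat u$ via the Leibniz rule and the untruncated equation $\partial_\tau u = \mathcal L u + E$, isolating a genuine interior nonlinear contribution from cutoff commutator terms supported in the annulus $A_\rho := \{\rho/2 \leq |(y_1,y_2)| \leq \rho\}$. Since $\chi$ depends only on $r = |(y_1,y_2)|$, a direct calculation gives
\begin{equation}
(\partial_\tau - \mathcal L)\hat u \;=\; \chi E \;+\; u\,\partial_\tau \chi \;-\; u\bigl(\Delta_y \chi - \tfrac{1}{2} y\cdot \nabla_y \chi\bigr) \;-\; 2\,\nabla_y \chi \cdot \nabla_y u \, ,
\end{equation}
and the task is to control each of the four terms in the Hilbert norm $\mathcal H$.

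I would first handle the three annular cutoff terms. On $A_\rho$ the smallness bound \eqref{small_graph} gives $|u|,|\nabla u|\leq \rho^{-2}$, and combined with $|\chi'|\leq C$, $|\chi''|\leq C\rho^{-2}$, $|\nabla_y\chi|\leq C\rho^{-1}$, and $|\rho'|\leq \rho$ from \eqref{univ_fns}, each of these three terms is pointwise bounded by $C\rho^{-2}$ and vanishes outside $A_\rho$. Since the Gaussian weight on $A_\rho$ is at most $e^{-\rho^2/16}$, the $\mathcal H$-norm of each annular term is at most $C e^{-\rho^2/64}$, a super-polynomially small remainder in $\rho$.

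For the interior term $\chi E$, the pointwise bound \eqref{error_first_untrunc} gives
\begin{equation}
\|\chi E\|_\mathcal H \leq C\rho^{-2}\bigl(\|\hat u\|_\mathcal H + \|\chi|\nabla u|\|_\mathcal H\bigr).
\end{equation}
To control the gradient factor by $\|\hat u\|_\mathcal H$ I would use the weighted integration-by-parts identity
\begin{equation}
\int_\Gamma \chi^2|\nabla u|^2 e^{-|q|^2/4}dq = -\int_\Gamma u\chi^2(\mathcal L u - u)e^{-|q|^2/4}dq - 2\int_\Gamma u\chi\,\nabla_y\chi\cdot\nabla_y u\, e^{-|q|^2/4}dq ,
\end{equation}
estimating the first integral on the right by $\|\hat u\|_\mathcal H\cdot \|\chi(\mathcal L u - u)\|_\mathcal H$ via Cauchy--Schwarz (where $\|\chi(\mathcal L u - u)\|_\mathcal H$ is $O(\rho^{-1})$ thanks to \eqref{small_graph} together with $|q|\leq \rho$ on $\operatorname{supp}\chi$), and absorbing the second (annular) integral into the left-hand side by Young's inequality. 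This yields $\|\chi|\nabla u|\|_\mathcal H \leq C\rho^{-1/2}\|\hat u\|_\mathcal H^{1/2} + C e^{-\rho^2/64}$ and therefore $\|\chi E\|_\mathcal H \leq C\rho^{-2}\|\hat u\|_\mathcal H + C\rho^{-5/2}\|\hat u\|_\mathcal H^{1/2} + C e^{-\rho^2/64}$.

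Combining the annular estimate with the interior estimate produces the proposition provided $\|\hat u\|_\mathcal H$ is bounded below by a fixed negative power of $|\tau|$, so that the terms $C\rho^{-5/2}\|\hat u\|_\mathcal H^{1/2}$ and $C e^{-\rho^2/64}$ are dominated by $\tfrac{1}{2}C\rho^{-1}\|\hat u\|_\mathcal H$. Such a polynomial lower bound is furnished by the Colding--Minicozzi Lojasiewicz inequality invoked elsewhere in the section (the degenerate case $u\equiv 0$, corresponding to the excluded round cylinder, being trivial). The main obstacle is thus not the exponential smallness of the annular commutator terms, but the conversion of the factor $|\nabla u|$ inside $E$ into a bound against $\|\hat u\|_\mathcal H$: the naive pointwise estimate $\|\chi|\nabla u|\|_\mathcal H \leq \rho^{-2}\|\chi\|_\mathcal H$ loses too much, and one must exploit the Gaussian weight via integration by parts while carefully absorbing the boundary cross terms produced by $\nabla\chi$.
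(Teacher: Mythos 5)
Your commutator expansion and the treatment of the purely interior part of $E$ match the paper's starting point, but the way you close the estimate has a genuine gap. The conclusion \eqref{eq_truncated} is a \emph{relative} bound: every error must be controlled by $\rho^{-1}\|\hat u\|_{\mathcal{H}}$. You instead bound the annular cutoff terms \emph{absolutely} by $Ce^{-c\rho^2}$, and your integration-by-parts treatment of the gradient factor produces $\|\chi|\nabla u|\|_{\mathcal H}\lesssim \rho^{-1/2}\|\hat u\|_{\mathcal H}^{1/2}+Ce^{-c\rho^2}$ (a square-root loss, coming from estimating $\|\chi(\mathcal L u-u)\|_{\mathcal H}$ by the absolute quantity $C\rho^{-1}$ rather than by something proportional to $\|\hat u\|_{\mathcal H}$). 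To convert these into the required relative form you then invoke a lower bound $\|\hat u\|_{\mathcal H}\gtrsim|\tau|^{-k}$, attributed to the Colding--Minicozzi Lojasiewicz inequality. That inequality gives \emph{upper} bounds on the deviation from the cylinder (it is used in Proposition \ref{radius_lower_bound} to show $\beta(\tau)\leq C|\tau|^{-\nu/2}$), not lower bounds, and no such polynomial lower bound holds in general: in the unstable-mode-dominant case (e.g.\ $\mathbb{R}\times$2d-bowl) one has $\|\hat u\|_{\mathcal H}\sim e^{\tau/2}$, which for an admissible graphical radius growing slowly (say $\rho=|\tau|^{1/10}$) is far smaller than $e^{-c\rho^2}$ and than $\rho^{-5/2}\|\hat u\|_{\mathcal H}^{1/2}$. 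Since Proposition \ref{prop_trunc_bubble} must hold for every admissible radius and is the input to the Merle--Zaag alternative \emph{before} one knows which mode dominates, this case cannot be excluded or postponed; your argument therefore does not prove the statement.

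The paper avoids this by never producing absolute error terms: after the pointwise bounds $|(\partial_\tau-\mathcal L)\hat u|\leq C\rho^{-2}(|u|+|\nabla u|)$ for $r\leq\rho/2$ and $\leq C|u|+C\rho^{-1}|\nabla u|$ on the annulus, it invokes the inverse Poincar\'e inequality and the weighted $L^2$-estimate of \cite[Proposition 4.4]{CHH_wing}, which bound $\int_\Gamma\chi^2(u^2+|\nabla u|^2)e^{-|q|^2/4}$ and, crucially, the annular Gaussian integral $\int_{\{\rho/2\leq r\leq\rho\}}u^2e^{-|q|^2/4}$ directly by a constant (respectively $C\rho^{-2}$) times $\int_{\{r\leq\rho/2\}}\hat u^2 e^{-|q|^2/4}$. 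These are relative estimates exploiting the equation and the Gaussian weight, so the bound $C\rho^{-1}\|\hat u\|_{\mathcal H}$ follows with no lower bound on $\|\hat u\|_{\mathcal H}$ and without the square-root loss. To repair your proof you would need to replace both the crude Gaussian-tail bound on the annulus and the Cauchy--Schwarz step by estimates of this relative type (or cite them), rather than appeal to a nonexistent lower bound on $\|\hat u\|_{\mathcal H}$.
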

\begin{proof} Writing $r=|(y_1,y_2)|$ we compute
\begin{multline} 
(\partial_\tau -\mathcal{L})\hat{u}  = E \, \chi\! \Big ( \frac{r}{\rho} \Big ) - \frac{2}{\rho} \, \frac{\partial u}{\partial r} \, \chi' \Big ( \frac{r}{\rho} \Big ) - \frac{1}{\rho^2} \, u \, \chi'' \Big ( \frac{r}{\rho} \Big )\\
+ \frac{r}{2\rho} \, u \, \chi' \Big ( \frac{r}{\rho} \Big ) - \frac{r \rho'}{\rho^2} \, u \, \chi' \Big ( \frac{r}{\rho} \Big ). 
\end{multline} 
For $r \leq \rho/2$ using \eqref{error_first_untrunc} we infer that
\begin{equation}
|(\partial_\tau -\mathcal{L})\hat{u}| \leq \frac{C}{\rho^{2}}( |u| + |\nabla u|) \, ,
\end{equation}
while for $\rho/2\leq r \leq \rho$ using in addition \eqref{univ_fns} we see that
\begin{equation}
|(\partial_\tau -\mathcal{L})\hat{u}| \leq C |u| + \frac{C}{\rho}|\nabla u| \, .
\end{equation}
Together with the inverse Poincare inequality and the weighted $L^2$-estimate from \cite[Proposition 4.4]{CHH_wing}, which in particular hold in the case where the fine-tuning rotation is simply the identity matrix, this yields
\begin{align}
\int_\Gamma |(\partial_\tau -\mathcal{L})\hat{u}|^2 e^{-\frac{|q|^2}{4}}&\leq \frac{C}{\rho^{2}} \int_{\Gamma} (u^2+|\nabla u|^2) e^{-\frac{|q|^2}{4}} + C \int_{\Gamma\cap \{ \rho/2\leq r \leq \rho \} } u^2 e^{-\frac{|q|^2}{4}}\nonumber\\
&\leq \frac{C}{\rho^{2}} \int_{\Gamma\cap \{  r \leq \rho/2 \} } \hat{u}^2 e^{-\frac{|q|^2}{4}}\, ,
\end{align}
which proves the proposition. 
\end{proof}
Analyzing the spectrum of $\mathcal L$, we can decompose our Hilbert space as
\begin{equation}
\mathcal H = \mathcal{H}_+\oplus \mathcal{H}_0\oplus \mathcal{H}_-,
\end{equation}
where the unstable space is given by
\begin{align}
\mathcal{H}_+ =\text{span}\{1, y_1, y_{2}, \cos\theta, \sin\theta\},\label{basis_hplus}
\end{align}
and neutral space is given by
\begin{align}
\mathcal{H}_0 =\text{span}\left\{y^2_{1}-2, y^2_{2}-2, y_{1}y_{2}, y_{1}\cos\theta, y_{1}\sin\theta,  y_{2}\cos\theta, y_{2}\sin\theta \right\}.
\label{basis_hneutral}
\end{align}
Consider the functions
\begin{equation}
U_{\pm}(\tau):= \|P_{\pm} \hat{u}(\cdot,\tau)\|_{\mathcal{H}}^2, \qquad U_0(\tau):= \|P_0 \hat{u}(\cdot,\tau)\|_{\mathcal{H}}^2\, ,
\end{equation}
where $\mathcal{P}_{\pm},\mathcal{P}_0$ denotes the orthogonal projections to $\mathcal{H}_{\pm},\mathcal{H}_0$, respectively.
\begin{proposition}[{Merle-Zaag alternative, cf. \cite[Theorem 4.8]{CHH_wing}}]\label{mz.ode.fine.bubble-sheet}
For $\tau\to -\infty$, either the neutral mode is dominant, i.e.
\begin{equation}
U_-+U_+=o(U_0),
\end{equation}
or the unstable mode is dominant, i.e.
\begin{equation}
U_-+U_0\leq C\rho^{-1}U_+.
\end{equation}
\end{proposition}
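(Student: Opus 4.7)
The plan is to derive a coupled ODE system for the three mode functions $U_+$, $U_0$, $U_-$ from the truncated evolution estimate of Proposition \ref{prop_trunc_bubble} together with the spectral structure of $\mathcal{L}$, and then invoke the Merle-Zaag dichotomy in the spirit of \cite{MZ} and \cite[Theorem 4.8]{CHH_wing}.

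First, I would write the equation as $\partial_\tau \hat{u} = \mathcal{L}\hat{u} + F$ with $F := (\partial_\tau - \mathcal{L})\hat{u}$, project it onto the orthogonal subspaces $\mathcal{H}_+$, $\mathcal{H}_0$, $\mathcal{H}_-$, and pair with $P_+\hat{u}$, $P_0\hat{u}$, $P_-\hat{u}$ respectively. Computing eigenvalues of $\mathcal{L}$ on the bases \eqref{basis_hplus} and \eqref{basis_hneutral} shows that $\mathcal{L}\geq \tfrac{1}{2}I$ on $\mathcal{H}_+$, $\mathcal{L}=0$ on $\mathcal{H}_0$, and $\mathcal{L}\leq -\tfrac{1}{2}I$ on $\mathcal{H}_-$. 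Combining these with the bound $\|F\|_\mathcal{H}\leq C\rho^{-1}\|\hat{u}\|_\mathcal{H}$ from Proposition \ref{prop_trunc_bubble} and Cauchy-Schwarz together with AM-GM, I expect to obtain
\begin{align*}
\dot{U}_+ &\geq U_+ - C\rho^{-1}(U_0 + U_-),\\
|\dot{U}_0| &\leq C\rho^{-1}(U_+ + U_0 + U_-),\\
\dot{U}_- &\leq -U_- + C\rho^{-1}(U_+ + U_0).
\end{align*}

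Next, I would apply the Merle-Zaag argument in two stages. The first stage is to absorb the stable mode: if $U_-$ were to exceed $2C\rho^{-1}(U_++U_0)$ at some $\tau_0 \ll 0$, then the third inequality, compared with the backward barrier $e^{(\tau-\tau_0)/2}U_-(\tau_0)$, would force $U_-$ to grow unboundedly as $\tau$ decreases, contradicting $\hat{u}(\cdot,\tau)\to 0$ in $\mathcal{H}$. Thus $U_-\leq C\rho^{-1}(U_++U_0)$ for $\tau \ll 0$. In the second stage, with $U_-$ controlled, the system reduces to a two-mode problem for $U_+$ and $U_0$; tracking the ratio $U_0/U_+$ backward in time produces the dichotomy. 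Either this ratio is bounded, in which case the first inequality gives backward-exponential decay of $U_+$ and the second inequality integrates against this decay to yield the quantitative bound $U_0+U_-\leq C\rho^{-1}U_+$ (unstable dominant); or there is a sequence $\tau_k\to-\infty$ along which $U_0/U_+ \to \infty$, in which case a Gronwall argument applied to the ratio forces $U_++U_-=o(U_0)$ for all $\tau\ll 0$ (neutral dominant).

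The principal subtlety I anticipate is that the error coefficient $\varepsilon(\tau)=C\rho(\tau)^{-1}$ is time-varying rather than a fixed small constant as in the original Merle-Zaag setting. However, since $\rho(\tau)\to\infty$ monotonically by \eqref{univ_fns} and satisfies the admissibility bound $-\rho\leq \rho'\leq 0$, the coefficient varies slowly compared to the spectral gap of size $\tfrac{1}{2}$, so on any bounded time window $\varepsilon$ can be treated as effectively constant, and the dichotomy propagates back to $-\infty$. This is essentially the bubble-sheet counterpart of \cite[Theorem 4.8]{CHH_wing}; the only structural change is that, without the fine-tuning rotation, we use the plain decomposition $\mathcal{H}=\mathcal{H}_+\oplus \mathcal{H}_0\oplus \mathcal{H}_-$, but the ODE analysis proceeds identically.
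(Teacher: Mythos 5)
Your proposal is correct and follows essentially the same route as the paper: one derives exactly the differential inequalities \eqref{U_PNM_system} from Proposition \ref{prop_trunc_bubble} together with the spectral gap $1/2$ of $\mathcal{L}$, notes that $U_++U_0+U_-\to 0$ as $\tau\to-\infty$, and concludes via the Merle--Zaag dichotomy. The only difference is that the paper simply invokes the Merle--Zaag ODE lemma of \cite{MZ,ChoiMantoulidis} (whose statement already accommodates the decaying coefficient $C\rho^{-1}$), whereas you sketch its proof inline.
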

\begin{proof}
Using Proposition \ref{prop_trunc_bubble} (truncated evolution) and observing that all nonzero eigenvalues of $\mathcal{L}$ have absolute value at least $1/2$, we obtain
\begin{align} 
 \dot{U}_+ &\geq U_+ - C\rho^{-1} \, (U_+ + U_0 + U_-), \nonumber\\ 
 | \dot{U}_0  | &\leq C\rho^{-1} \, (U_+ + U_0 + U_-), \label{U_PNM_system}\\ 
 \dot{U}_- &\leq -U_- + C\rho^{-1} \, (U_+ + U_0 + U_-). \nonumber
\end{align}
Moreover, since $\bar{M}_\tau$ for $\tau\to -\infty$ converges locally uniformly to $\Gamma$, we see that
\begin{equation}
\lim_{\tau\to -\infty} \left( U_{+} + U_0+U_{-}\right) =0\, .
\end{equation}
Hence, the Merle-Zaag ODE lemma \cite{MZ,ChoiMantoulidis} implies the assertion.
\end{proof}

To conclude this subsection, let us observe that for any other choice of admissible graphical radius the same mode stays dominant.

\bigskip

\subsection{Graphical radius}
Throughout this subsection we assume that the neutral mode is dominant. The goal is to construct an improved graphical radius, by generalizing \cite[Section 4.3.1]{CHH} to the  bubble-sheet setting. To this end, we denote by $\rho_0$ the initial choice of graphical radius from the previous subsection, and consider the quantities
\begin{equation}\label{def_beta}
    \beta(\tau):=\sup_{\sigma\leq \tau}\left(\int_{\Gamma}{u}^2(q,\sigma)\chi^2\left(\frac{|q|}{\rho_{0}(\sigma)}\right)e^{-\frac{|q|^2}{4}}dq\right)^{1/2},
\end{equation}
and
\begin{equation}\label{graphrho}
    \rho(\tau):=\beta(\tau)^{-\frac{1}{5}}.
\end{equation}

\begin{proposition}[{admissibility}]\label{improved radius}\label{prop_admiss}
The function $\rho$ is an admissible graphical radius function, i.e. the estimates \eqref{univ_fns} and \eqref{small_graph} hold for $\tau\ll 0$.
\end{proposition}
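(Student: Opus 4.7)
The plan is to verify the three admissibility conditions separately: $\rho(\tau)\to\infty$, $-\rho\le\rho'\le 0$, and the graphical $C^4$-bound \eqref{small_graph}, all for $\tau\ll 0$.

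\textbf{Properties of $\rho$.} Since $\bar M_\tau \to \Gamma$ in $C^\infty_{\mathrm{loc}}$ as $\tau\to -\infty$, the integrand inside \eqref{def_beta} tends pointwise to zero, and using the a priori bound \eqref{small_graph} at radius $\rho_0$ together with the Gaussian weight as a dominating function, the integral itself tends to zero as $\sigma\to -\infty$. Thus $\beta(\tau)\to 0$ and $\rho(\tau)\to\infty$. Monotonicity $\rho'\le 0$ is immediate from the fact that $\beta$ is a supremum over the nondecreasing family $\{\sigma\le\tau\}$. For the lower bound $\rho'\ge -\rho$, equivalent to $(\log\beta)'\le 5$ almost everywhere, the plan is to differentiate the integrand in \eqref{def_beta} using $\partial_\sigma u=\mathcal L u+E$, integrate by parts against the Gaussian weight on $\Gamma$, and use \eqref{small_graph} and \eqref{error_first_untrunc} to bound the resulting derivative by a uniform multiple of the integral itself, yielding the desired Lipschitz control on $\log\beta$.

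\textbf{The $C^4$-estimate.} This is the main content. Fix $\tau\ll 0$ and $q_0\in\Gamma$ with $|q_0|\le 2\rho(\tau)$; the goal is to show $|D^k u(q_0,\tau)|\le\rho(\tau)^{-2}$ for $0\le k\le 4$. The key inputs will be (i) quantitative weighted $L^2$-closeness of $\bar M_\sigma$ to $\Gamma$ uniformly in $\sigma\le\tau$, encoded by $\beta(\tau)$; (ii) interior parabolic regularity for $u$, viewed as a solution of a quasilinear PDE over $\Gamma$, in Ecker-Huisken/Brakke form, giving $C^4$ bounds in terms of $L^2$ averages; and (iii) the ADS-type barriers $\Gamma_a$ from \eqref{rotated_barrier}, which confine $\bar M_\tau$ between $\Gamma$ and a one-parameter family of shrinker caps and thereby let us extend graphicality from the original radius $\rho_0$ out to the improved radius $2\rho$ once pointwise smallness is available in the core region. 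Interpolating between the weighted $L^2$-smallness of order $\beta$ and the uniform $C^{k+2}$-bounds on $\bar M_\tau$ that follow from the original graphical estimate together with standard curvature estimates, I expect to recover $\|u(\cdot,\tau)\|_{C^4(q_0)}\le\beta^{2/5}=\rho^{-2}$; the exponent $1/5$ in the definition of $\rho$ is precisely calibrated for this interpolation to close.

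\textbf{Main obstacle.} The chief difficulty will be this last bootstrap: the control furnished by $\beta$ carries the Gaussian weight $e^{-|q|^2/4}$, which deteriorates as $|q_0|$ approaches $2\rho=2\beta^{-1/5}$, so one must carefully combine the ADS barriers with parabolic smoothing on a judiciously chosen backwards time interval to absorb this loss. A fully analogous construction in the neck setting has been carried out in \cite[Section 4.3.1]{CHH}, and the bubble-sheet version should require essentially only bookkeeping changes to accommodate the extra Euclidean factor in $\Gamma=\mathbb{R}^2\times S^1(\sqrt{2})$.
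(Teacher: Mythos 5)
Your outline (the three properties to check, core smallness, ADS barriers, interior parabolic estimates) is in the right spirit, and the parts $\rho\to\infty$ and $\rho'\le 0$ are fine. But the heart of the proposition is the $C^4$-bound \eqref{small_graph} on the ball of radius $2\rho=2\beta^{-1/5}$, and there your argument has a genuine gap: the ``interpolation between the weighted $L^2$-smallness of order $\beta$ and uniform $C^{k+2}$-bounds'' cannot produce any useful pointwise estimate at radius $\sim\beta^{-1/5}$, since the Gaussian weight there is of size $e^{-c\beta^{-2/5}}$, smaller than every power of $\beta$; you correctly flag this as the main obstacle, but you never resolve it beyond citing \cite{CHH}. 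The actual mechanism (both in \cite{CHH} and in this paper) is a quantitative barrier argument, not an interpolation: standard parabolic estimates convert the $\mathcal H$-smallness into the pointwise bound $|u|\le K\beta$ on the \emph{fixed} core $\{|(y_1,y_2)|\le 2L_0\}$ (where the Gaussian weight is harmless); one then inserts the rotated ADS shrinker $\Gamma_a$ from \eqref{rotated_barrier} with the calibrated parameter $a\sim\beta^{-1/2}$, chosen via \cite[Lemma 4.4]{ADS1} so that $u_a(L_0-1)\le\sqrt2-K\beta$, so the inner barrier principle anchored at the core estimate forces $\Gamma_a$ to stay enclosed, giving $u\ge -C\beta^{1/2}$ out to radius $\sim\sqrt a\sim\beta^{-1/4}$; finally \emph{convexity} together with the core bound converts this one-sided bound into $|u|\le C\beta^{1/2}$ there, and interior estimates upgrade this to $C^4$. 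The exponent $1/5$ then closes the argument simply because $\beta^{1/2}\ll\beta^{2/5}=\rho^{-2}$ and $\beta^{-1/5}\ll\beta^{-1/4}$ --- it is not calibrated to any interpolation. Note also that your statement that the barriers ``confine $\bar M_\tau$ between $\Gamma$ and the shrinker caps'' is not justified: the $\Gamma_a$ are inner barriers only, the flow need not lie inside the cylinder, and the matching upper bound on $u$ comes from convexity.

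A secondary issue concerns the Lipschitz bound $\rho'\ge-\rho$, i.e.\ $\beta'\le 5\beta$. Differentiating \eqref{def_beta} and integrating by parts, the term $\langle u\chi^2,\mathcal L u\rangle_{\mathcal H}$ is indeed bounded above by $C\|u\chi\|_{\mathcal H}^2$ plus cutoff commutator terms supported on the annulus $\{\rho_0/2\le r\le\rho_0\}$, where $\chi$ may vanish; these terms are \emph{not} controlled by \eqref{small_graph} and \eqref{error_first_untrunc} alone in terms of $\beta^2$, and one needs the weighted $L^2$-estimate of \cite[Proposition 4.4]{CHH_wing} to dominate the annulus integral by the core integral (the paper instead deduces $|\tfrac{d}{d\tau}\beta^2|=o(\beta^2)$ from the standing neutral-mode-dominance assumption together with that estimate). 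Only the one-sided bound is needed for admissibility, but as written this step does not close from your stated inputs.
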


\begin{proof}
Since $\bar{M}_\tau$ for $\tau\to -\infty$ converges locally uniformly to $\Gamma$, it is clear that
\begin{equation}
\lim_{\tau \to -\infty} \rho(\tau)=\infty\, .
\end{equation}
To proceed, we need the following barrier estimate:

\begin{claim}[{barrier estimate, cf. \cite[Proposition 4.18]{CHH}}]
There is a constant $C<\infty$ such that
\begin{equation}
 |u (y_1,y_2,\theta,\tau)| \leq C\beta(\tau)^{\frac{1}{2}}
\end{equation}
holds for $|(y_1,y_2)| \leq C^{-1}\beta(\tau)^{-\frac{1}{4}}$ and $\tau\ll 0$.
\end{claim}

\begin{proof}Let $L_0$ be the constant from the ADS-shrinker foliation \eqref{ads_shrinker}.
By standard parabolic estimates there is some constant $K<\infty$ such that for $\tau\ll 0$ we have
\begin{equation}\label{C_0.bound.y=L_0}
\sup_{|(y_1,y_2)|\leq 2L_0}|u(y_1,y_2,\theta,\tau)|\leq K \beta(\tau).
\end{equation}
Now, given $\hat \tau\ll 0$, consider  the hypersurface $\Gamma_a$ from \eqref{rotated_barrier} with parameter
\begin{equation}\label{a=root.beta}
a=\frac{c_0}{\sqrt{K\beta(\hat \tau)}}.
\end{equation}
If we choose $c_0$ small enough, then by \cite[Lemma 4.4]{ADS1} the profile function $u_a$ of the ADS-shrinker $\Sigma_a$ satisfies
\begin{equation}
u_a(L_0-1) \leq \sqrt{2}-K\beta(\hat \tau).
\end{equation}
Combining this with \eqref{C_0.bound.y=L_0}, the inner barrier principle from  \cite[Corollary 3.4]{CHH_wing} implies that $\Gamma_a$ is enclosed by $\bar M_\tau$  for $|(y_1,y_2)| \geq L_0$ and $\tau \leq \hat\tau$. Since  $u_a(\sqrt{a})^2\geq 2-2/a$ (see e.g. \cite[Equation (195)]{CHH}), this yields
\begin{equation}
u(y_1,y_2,\theta,\hat\tau)^2 \geq 2-2/a
\end{equation}
for $|(y_1,y_2)|\in [L_0,\sqrt{a}-1]$. Hence, remembering \eqref{a=root.beta} we conclude that 
\begin{equation}
 u (y_1,y_2,\theta,\tau) \geq -C\beta(\tau)^{\frac{1}{2}}
\end{equation}
holds for $|(y_1,y_2)| \leq C^{-1}\beta(\tau)^{-\frac{1}{4}}$ and $\tau\ll 0$. Finally, by convexity and \eqref{C_0.bound.y=L_0} the lower bound implies a corresponding upper bound. This finishes the proof of the claim.
\end{proof}

Now, remembering that  $\rho(\tau)=\beta(\tau)^{-\frac{1}{5}}$, by the claim and standard interior estimates we get
\begin{equation}
\|u(\cdot,\tau)\|_{C^4(\Sigma\cap B_{2\rho(\tau)}(0))}\leq \rho(\tau)^{-2}
\end{equation}
for $\tau\ll 0$. Moreover, by definition of $\beta$ we clearly have $\dot{\rho}\leq 0$. Finally, using the assumption that the neutral eigenfunctions dominate and the weighted $L^2$-estimate from \cite[Proposition 4.4]{CHH_wing} we see that
\begin{equation}
\left|\frac{d}{d\tau} \beta^2\right| = o(\beta^2),
\end{equation}
which implies $ |\dot{\rho}|\leq \rho$ for $\tau \ll 0$. This finishes the proof of proposition.
\end{proof}

\begin{proposition}[{graphical radius, c.f.  \cite[Proposition 4.19]{CHH}}]\label{radius_lower_bound}
There is a constant $\gamma>0$, so that $\rho(\tau)=\beta(\tau)^{-\frac{1}{5}}$ for $\tau\ll 0$ satisfies
\begin{equation}
    \rho(\tau)\geq |\tau|^{\gamma}.
\end{equation}
\end{proposition}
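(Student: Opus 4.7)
Since $\rho=\beta^{-1/5}$, the claim is equivalent to a polynomial upper bound $\beta(\tau)\leq C|\tau|^{-5\gamma}$ for $\tau\ll 0$. My plan, which parallels \cite[Prop.~4.19]{CHH}, is to combine the Colding--Minicozzi {\L}ojasiewicz inequality for cylindrical shrinkers \cite{CM_uniqueness} with Huisken's monotonicity of the renormalized F-functional applied to the bubble-sheet shrinker $\Gamma=\mathbb{R}^2\times S^1(\sqrt{2})$.

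Introduce the F-defect $G(\tau):=F[\Gamma]-F[\bar{M}_\tau]$. The renormalized monotonicity formula gives $\dot G(\tau)=\|\Phi(\bar{M}_\tau)\|^2_{L^2_G}$, where $\Phi=H-\langle q,\nu\rangle/2$, and since $\bar{M}_\tau\to\Gamma$ smoothly on compact sets we have $G(\tau)\geq 0$ and $G(\tau)\to 0$ as $\tau\to-\infty$. The Colding--Minicozzi {\L}ojasiewicz inequality for the generalized cylinder $\Gamma$---whose $C^{2,\alpha}$-closeness hypothesis on a large Gaussian ball is verified by the admissibility estimate \eqref{small_graph} applied with $\rho_0$---then yields
\begin{equation*}
 G(\tau)^{1+\eta} \leq C\,\|\Phi(\bar{M}_\tau)\|^2_{L^2_G} = C\,\dot G(\tau)
\end{equation*}
for some universal $\eta>0$ and all $\tau\ll 0$. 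Integrating this Bernoulli-type ODE backward from any fixed $\tau_*$ produces $G(\tau)\leq C|\tau|^{-1/\eta}$ for $\tau\ll 0$.

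To convert the F-defect bound into a bound on $\beta$, I would use the companion $L^2$ form of the {\L}ojasiewicz estimate, $\|u\|_{L^2_G}^{2+\delta}\leq C\|\Phi\|_{L^2_G}^2$, which the Colding--Minicozzi machinery provides together with the F-version, together with the pointwise barrier bound $|u|\leq C\beta^{1/2}$ on $B_{c\beta^{-1/4}}$ proved in Proposition \ref{prop_admiss}. The barrier bound renders the contribution of the region $|q|>\rho_0(\tau)$ exponentially small in $\rho_0$ and hence negligible against the claimed polynomial decay, so integrating yields $\int_\Gamma u(q,\tau)^2 \chi^2(|q|/\rho_0(\tau))\,e^{-|q|^2/4}\,dq\leq C|\tau|^{-\mu}$ for some $\mu>0$. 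Taking the supremum over $\sigma\leq\tau$ preserves this monotone-in-$|\sigma|$ bound and gives $\beta(\tau)\leq C|\tau|^{-\mu/2}$, whence $\rho(\tau)\geq c|\tau|^{\mu/10}$, which is the claim with $\gamma=\mu/10$.

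The main obstacle is the coercivity step converting the F-defect into an $L^2_G$ bound on $u$. The issue is that the neutral eigenfunctions lie in the kernel of $\mathcal{L}$, so the quadratic form $-\langle\mathcal{L}u,u\rangle$ degenerates on $\mathcal{H}_0$, and a naive Taylor expansion of $F$ yields only $G\lesssim \|u_0\|_{L^2_G}^3$, which is the wrong direction. The resolution is either to invoke the sharper $L^2$-version of {\L}ojasiewicz directly, or to use the Merle--Zaag structure from Proposition \ref{mz.ode.fine.bubble-sheet}---which in the neutral-dominant subsection controls $U_++U_-=o(U_0)$---together with monotonicity to extract the correct polynomial rate on $\|u\|_{L^2_G}$. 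All remaining ingredients---the Gaussian tail estimate via the ADS-shrinker foliation \eqref{rotated_barrier}, the integration of the Bernoulli inequality, and the monotonicity of the sup in the definition of $\beta$---are essentially routine bubble-sheet adaptations of the neck argument in \cite{CHH}.
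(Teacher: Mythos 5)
Your reduction to a polynomial bound on $\beta$ and your use of a Łojasiewicz inequality for the $F$-defect are in the spirit of the paper, but there is a genuine gap at exactly the step you yourself flag as ``the main obstacle'': converting decay of $G(\tau)=F(\Gamma)-F(\bar M_\tau)$ into decay of the Gaussian $L^2$ norm of $u(\cdot,\tau)$. The ``companion $L^2$ form'' $\|u\|_{\mathcal H}^{2+\delta}\leq C\|\Phi\|_{L^2_G}^{2}$ that you invoke is not an estimate Colding--Minicozzi provide in this form, and as stated it cannot hold: the critical set of $F$ near $\Gamma$ contains slightly rotated and translated cylinders, for which $\Phi\equiv 0$ while $u\not\equiv 0$, and even modulo this, on the neutral modes $\Phi$ is quadratic in $u$, so any inequality bounding $\|u\|$ by $\|\Phi\|$ with a fixed power degenerates precisely in the neutral-dominant regime relevant here. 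Moreover, even granting such an inequality, you would need a pointwise-in-time bound on $\dot G=\|\Phi\|_{L^2_G}^2$, whereas $G(\tau)\leq C|\tau|^{-1/\eta}$ only controls time averages of $\dot G$. Your fallback --- Merle--Zaag plus monotonicity --- also cannot produce the rate: Proposition \ref{mz.ode.fine.bubble-sheet} only compares modes to each other ($U_\pm=o(U_0)$) and gives no absolute decay rate for $U_0$; obtaining such a rate is the whole point of this proposition.

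The paper's mechanism is different and avoids any coercivity of $F$ near $\Gamma$. From the discrete Łojasiewicz inequality \cite[Theorem 6.1]{CM_uniqueness} and the discrete Łojasiewicz lemma \cite[Lemma 6.9]{CM_uniqueness} one gets, beyond the defect decay, the summability with polynomial tail \eqref{tail_decay_sum} of the square roots of the $F$-differences. Since the renormalized flow is the gradient flow of $F$, Cauchy--Schwarz on unit time intervals converts this into the accumulated-displacement bound
\begin{equation*}
\int_{-\infty}^{\tau}\int_{\bar M_{\tau'}}\Big|H(q)+\frac{q^{\perp}}{2}\Big|\,e^{-\frac{|q|^2}{4}}\,d\mu_{\tau'}(q)\,d\tau'\leq C|\tau|^{-\nu},
\end{equation*}
and \cite[Lemma A.48]{CM_uniqueness} turns total displacement, together with $\bar M_\tau\to\Gamma$ as $\tau\to-\infty$, into the Gaussian $L^1$ bound on $|u(\cdot,\tau)|$ over $\{|(y_1,y_2)|\leq\rho(\tau)/2\}$. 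The passage from $L^1$ to the $L^2$ quantity $\beta$ then uses the pointwise smallness of $u$ from Proposition \ref{prop_admiss} together with the weighted $L^2$ estimate of \cite[Proposition 4.4]{CHH_wing}, not a Łojasiewicz inequality. This ``finite length'' argument --- bounding the hypersurface's distance to $\Gamma$ by the integrated speed rather than by the instantaneous $F$-defect --- is the missing idea in your proposal; without it the argument does not close.
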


\begin{proof}
Consider the Gaussian area functional
\begin{equation}
F(M)=(4\pi)^{-3/2}\int_M e^{-\frac{|q|^2}{4}} \, .
\end{equation}
By Colding-Minicozzi \cite[Theorem 6.1]{CM_uniqueness}, there exist $\eta\in (1/3,1)$ and $K<\infty$, so that for $\tau\ll 0$ we have
\begin{equation}
\left (F(\Gamma)-F(\bar{M}_{\tau})\right)^{1+\eta} \leq K\left(F(\bar{M}_{\tau-1})-F(\bar{M}_{\tau+1})\right).
\end{equation}
Using the discrete Lojasiewicz lemma \cite[Lemma 6.9]{CM_uniqueness} this yields
\begin{equation}
\left(F(\Gamma)-F(\bar{M}_{\tau})\right) \leq C|\tau|^{-1/\eta},
\end{equation}
and 
\begin{equation}\label{tail_decay_sum}
\sum_{j=J}^{\infty} \left(F(\bar{M}_{-j-1})-F(\bar{M}_{-j})\right)^{1/2} \leq C(\nu)J^{-\nu},
\end{equation}
where $\nu = (\eta^{-1}-1)/4$. Since the renormalized mean curvature flow is the downwards gradient flow of the $F$-functional this implies
\begin{equation}
\int_{-\infty}^{\tau}\int_{\bar{M}_{\tau'}} \left|H(q)+\frac{q^{\perp}}{2}\right|e^{-\frac{|q|^2}{4}} d\mu_{\tau'}(q)\, d\tau' \leq C|\tau|^{-\nu}\, .
\end{equation}
Hence, applying \cite[Lemma A.48]{CM_uniqueness} we infer that
\begin{equation}
\int_{\Gamma\cap \{|(y_1,y_2)|\leq \rho(\tau)/2\}}| u(y_1,y_2,\theta,\tau)|e^{-\frac{y_1^2+y_2^2}{4}} \, dy\, d\theta \leq C|\tau|^{-\nu}.
\end{equation}
Together with Proposition \ref{improved radius} (admissibility) and the weighted $L^2$-estimate from \cite[Proposition 4.4]{CHH_wing}
we conclude that
\begin{equation}
\beta(\tau)\leq C|\tau|^{-\nu/2}.
\end{equation}
Choosing $\gamma=\nu/20$, this proves the assertion.
\end{proof}

\bigskip

\subsection{Almost symmetry}
The goal of this subsection is to prove that the angular derivative $ u_\theta$ is exponentially small. To show this we will generalize the arguments from \cite[Section 4.3.3]{CHH} to the bubble-sheet setting.

As in the previous subsection we assume that the neutral mode is dominant. Thanks to Proposition \ref{radius_lower_bound} (graphical radius) we can from now on work with the graphical radius
\begin{equation}
\rho(\tau):=|\tau|^\gamma.
\end{equation}

We will use the  bubble-sheet improvement theorem by Zhu \cite{Zhu}. We say that a vector field $K$ in $\mathbb{R}^{4}$ is a \emph{normalized rotational vector field} if it can be expressed as $K(x)=SJS^{-1}(x-q)$, where $S\in \mathrm{SO}(4)$, $q\in \mathbb{R}^{4}$ and  
\begin{equation}
J=\left(
  \begin{array}{ccccc}
   0 & 0 & 0 & 0  \\
   0 & 0 & 0 & 0   \\
   0 & 0 & 0 & -1    \\
  0 & 0 & 1 & 0  \\
  \end{array}
\right)
\end{equation}
We say that a space-time point $X=(x,t)\in \mathcal{M}$ is  \emph{$\varepsilon$-symmetric} if there exists a normalized rotational vector field $K$ so that
\begin{equation}
|K|H\leq 5 \,\,\, \textrm{and}\,\, \, |\langle K, \nu\rangle|H\leq \varepsilon   \qquad \textrm{in}\quad P(X, 100/ H(X)).
\end{equation}
By \cite[Theorem 3.7]{Zhu} there exist constants $\eps_0>0$ and $L<\infty$ with the following significance. Given $\varepsilon\leq\varepsilon_{0}$ and $X\in \mathcal{M}$, if every $X'\in \mathcal{M}\cap P(X, L/H(X))$ is $\varepsilon_{0}$-close to a bubble-sheet and $\varepsilon$-symmetric,  then $X$ is $\eps/2$-symmetric.

\begin{lemma}[{circular improvement, cf.  \cite[Lemma 4.23]{CHH}}]\label{improvement}
There exists a constant $\eta_{0}>0$, so that for $\tau\ll0$ all space-time points $X\in \mathcal{M}$ corresponding to points in $\bar{M}_{\tau}\cap \{|(y_1,y_2)|\leq \rho(\tau)\}$ are $2^{-\eta_{0}\rho(\tau)}$-symmetric.
\end{lemma}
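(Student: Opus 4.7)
The plan is to iterate Zhu's bubble-sheet improvement theorem \cite[Theorem 3.7]{Zhu} inwards from the boundary of the graphical region. Since each application of the improvement halves the symmetry defect, and one can perform approximately $\rho(\tau)/L$ such iterations before reaching the center, this produces the claimed factor $2^{-\eta_0\rho(\tau)}$ with $\eta_0\sim 1/L$.

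\textbf{Base case.} First I would show that for $\tau\ll 0$ every space-time point in the graphical region is simultaneously $\varepsilon_0$-close to a bubble-sheet and $\varepsilon_0$-symmetric. The closeness to a bubble-sheet at scale $H^{-1}$ is immediate from Proposition \ref{improved radius} combined with standard parabolic regularity, since $\|u(\cdot,\tau)\|_{C^4}\leq \rho(\tau)^{-2}\to 0$. For $\varepsilon_0$-symmetry I would take $K$ to be the normalized rotational vector field generating the $S^1(\sqrt 2)$-factor of $\Gamma$, so that $|K|\equiv \sqrt 2$ and $|K|H\equiv 1$ on $\Gamma$. Pulling $K$ back to the perturbed hypersurface $\bar M_\tau$ and using $\langle K,\nu_\Gamma\rangle\equiv 0$, one obtains $|\langle K,\nu\rangle|H \leq C\|u\|_{C^1}\leq C\rho^{-2}\leq \varepsilon_0$ whenever $\tau$ is sufficiently negative.

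\textbf{Iteration.} Writing $r_k := 2\rho(\tau)-kL$ and $\varepsilon_k := 2^{-k}\varepsilon_0$, I would prove by induction on $k\in\{0,\dots,\lfloor\rho(\tau)/L\rfloor\}$ that every point $X\in\mathcal M$ corresponding to $\bar M_{\tau'}\cap\{|(y_1,y_2)|\leq r_k\}$ with $\tau'\leq \tau$ is $\varepsilon_k$-symmetric. The base case $k=0$ is Step 1, applied at each time $\tau'\leq \tau$; this only makes things better since $\rho$ is nonincreasing. For the inductive step, fix a point $X$ at time $\tau'\leq \tau$ with $|(y_1,y_2)|\leq r_{k+1}$. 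Converting the physical parabolic neighborhood $P(X,L/H(X))$ to renormalized coordinates shows that all of its points lie in $\bar M_{\tau''}\cap\{|(y_1,y_2)|\leq r_{k+1}+L=r_k\}$ for some $\tau''\in[\tau'-c,\tau']$ (here $c=O(\log L)$). The inductive hypothesis then supplies $\varepsilon_k$-symmetry on the whole parabolic neighborhood, and the base case supplies $\varepsilon_0$-closeness to a bubble-sheet there, so Zhu's theorem yields $\varepsilon_{k+1}=\varepsilon_k/2$-symmetry at $X$. Taking $k=\lfloor\rho(\tau)/L\rfloor$ and restricting to time $\tau$ gives the lemma with $\eta_0=1/(2L)$, where the extra factor absorbs the harmless constant $\varepsilon_0$.

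\textbf{Main obstacle.} The key subtlety is that Zhu's improvement requires the hypotheses throughout a full backward parabolic neighborhood rather than on a single time slice, which forces the induction to be uniform in $\tau'\leq\tau$. This is ultimately compatible because $\rho$ is nonincreasing and $\bar M_{\tau'}\to \Gamma$ as $\tau'\to-\infty$, so the symmetry and closeness data only improve when moving into the past. A secondary technical point is the precise conversion between physical parabolic scales $L/H$ and renormalized spatial/temporal scales; the $C^4$-smallness of $u$ keeps $H$ uniformly close to $1/\sqrt 2$, so the renormalized neighborhoods have spatial extent $O(L)$ and past temporal extent $O(\log L)$, which matches the induction cleanly.
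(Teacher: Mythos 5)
Your proposal is correct and follows essentially the same argument as the paper: the base case ($\varepsilon_0$-closeness and $\varepsilon_0$-symmetry from the $C^4$-smallness of $u$ on the graphical region, uniformly for all earlier times) followed by an inward iteration of Zhu's bubble-sheet improvement, losing a strip of width comparable to $L$ and halving the symmetry defect at each step, yielding $\eta_0\sim 1/L$. The extra care you take with the physical-to-renormalized conversion of the parabolic neighborhoods (spatial extent $O(L)$, temporal extent $O(\log L)$, constants absorbed into $\eta_0$) is exactly the implicit bookkeeping in the paper's proof.
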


\begin{proof}
Let $\varepsilon_{0}>0$, $L<\infty$ be the constants from Zhu's bubble-sheet improvement theorem as recalled above. Let $\tau_0\leq\tau_\ast$ be sufficiently negative, and set $R:=2\rho(\tau_0)$.\\
Let $q\in \bar{M}_{\tau}$ be a point with $|(y_1,y_2)|\leq R-L$ and $\tau \leq \tau_0$, and denote by $X\in \mathcal{M}$ the corresponding space-time point in the unrescaled flow. Using \eqref{small_graph} we see that every $X'\in \mathcal{M}\cap P(X,LH^{-1}(X))$ is $\eps_0$-close to a bubble-sheet and is $\eps_0$-symmetric. Hence, by Zhu's bubble-sheet improvement theorem \cite[Theorem 3.7]{Zhu} the space-time point $X$ is $\eps_0/2$-symmetric.\\
Similarly, for any $q\in \bar{M}_{\tau}$ with $|(y_1,y_2)| \leq R-2L$ and $\tau\leq \tau_0$, the argument above shows that the corresponding space-time point $X\in \mathcal{M}$ is $\eps_0/4$-symmetric. Iterating this $k$ times, where $k$ is the smallest integer so that $(k+1)L>R$, we  get that for all $q\in \bar{M}_{\tau}$ with $|(y_1,y_2)| \leq R/2$ and $\tau\leq \tau_0$ the corresponding space-time point $X\in\mathcal{M}$ is $\eps_0/2^k$-symmetric.  Choosing $\eta_0=1/L$, this proves the lemma.
\end{proof}

We can now prove the main result of this subsection:

\begin{proposition}[{almost circular symmetry, cf.  \cite[Proposition 4.24]{CHH}}]\label{utheta}
There exists a constant $\eta>0$, such  that for all $\tau\ll 0$ we have
\begin{equation}
    |u_{\theta}(y_1,y_2,\theta,\tau)|\leq e^{-\eta \rho(\tau)},
\end{equation}
whenever $|(y_1,y_2)|\leq \rho(\tau)$.
\end{proposition}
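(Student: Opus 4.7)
The plan is to convert the pointwise $\varepsilon$-symmetry from Lemma \ref{improvement}, with $\varepsilon = 2^{-\eta_0 \rho(\tau)}$, into an exponential pointwise bound on the angular derivative $u_\theta$. The translation has two distinct ingredients: passing from a family of locally defined symmetry axes to a single global axis, and then identifying this global axis with the standard rotation generator of the cylinder $\Gamma$.

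First, I would extract a single normalized rotational vector field $K^\ast$ out of the family $\{K_X\}$ produced by Lemma \ref{improvement}. At each space-time point $X$ in the central region we are handed some $K_X = S_X J S_X^{-1}(\cdot - q_X)$ satisfying $|\langle K_X,\nu\rangle| H \leq 2^{-\eta_0 \rho(\tau)}$ on a parabolic neighborhood of size $100/H(X)$. If $X_1,X_2$ are nearby space-time points, then on the overlap of their parabolic neighborhoods both $K_{X_1}$ and $K_{X_2}$ are near-tangent to $\bar M_\tau$; since $\bar M_\tau$ is essentially a bubble-sheet there, the vanishing of $\langle K,\nu\rangle$ in several transverse directions pins down the axis, so the two vector fields agree up to a constant multiple of $\varepsilon$. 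Chaining such overlap comparisons along a path of bounded length — exactly as in \cite[Proposition 4.24]{CHH} for the neck setting — yields a single $K^\ast$ valid on all of $\bar M_\tau \cap \{|(y_1,y_2)|\leq \rho(\tau)\}$ with $|\langle K^\ast,\nu\rangle| H \leq C\varepsilon$.

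Next, I would identify $K^\ast$ with the canonical cylinder rotation $K_0(y_1,y_2,y_3,y_4)=(0,0,-y_4,y_3)$. Since $\bar M_\tau$ is graphical over $\Gamma$ with $C^4$ norm bounded by $\rho(\tau)^{-2}$, the unit normal to $\bar M_\tau$ agrees with $\nu_\Gamma$ up to the same error. On the cylinder $\Gamma$, the vector field $K_0$ is, up to sign, the unique normalized rotation for which $\langle K_0,\nu_\Gamma\rangle\equiv 0$; for any other normalized rotational vector field $K'$ one has $\sup_{\Gamma\cap B_{\rho(\tau)}} |\langle K',\nu_\Gamma\rangle| \geq c\,|K'-K_0|$ for a universal $c>0$, which is applicable once $\rho(\tau)$ exceeds some absolute constant (guaranteed by Proposition \ref{radius_lower_bound}). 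Consequently $|K^\ast - K_0|$ is exponentially small in $\rho(\tau)$ on the central region.

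Finally, the last step is the direct computation relating $\langle K_0,\nu_{\bar M}\rangle$ to $u_\theta$ in the bubble-sheet parameterization $q+u(q)\nu_\Gamma(q)$. A brief calculation gives
\begin{equation*}
\langle K_0,\nu_{\bar M}\rangle = \frac{\tfrac{1}{\sqrt{2}} u_\theta + O(|u|\,|\nabla u|)}{\sqrt{1+|\nabla u|^2}},
\end{equation*}
and combining with the universal upper and lower bounds on $H$ in the central region (coming from $C^1$-closeness to $\Gamma$) yields $|u_\theta|\leq e^{-\eta\rho(\tau)}$ for any $\eta$ strictly smaller than $\eta_0\log 2$. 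The main obstacle is the first step: pointwise $\varepsilon$-symmetry only produces local axes, and a naive chaining of $O(\varepsilon)$ discrepancies over a path of length $\rho(\tau)$ would blow the error up to $\rho(\tau)\cdot\varepsilon$, which is no longer exponentially small. The fix, as in \cite{CHH}, is to exploit that each local axis is essentially determined by the global near-cylindrical geometry up to an error independent of path length, so no linear growth in $\rho(\tau)$ actually occurs in the chaining.
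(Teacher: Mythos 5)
Your overall route matches the paper's: convert the pointwise $\varepsilon$-symmetry of Lemma \ref{improvement} into the existence of a single rotation $K$ making every central point $\varepsilon$-symmetric with respect to that fixed $K$, then extract the pointwise $u_\theta$ bound. But there are two specific issues worth flagging.

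First, your claimed ``main obstacle'' is not actually an obstacle, and your proposed fix is unnecessary. You write that chaining $O(\varepsilon)$ discrepancies over a path of length $\rho(\tau)$ gives $\rho(\tau)\cdot\varepsilon$, ``which is no longer exponentially small.'' This is false: with $\varepsilon=2^{-\eta_0\rho(\tau)}$ the product $\rho(\tau)\cdot 2^{-\eta_0\rho(\tau)}$ is still $\leq Ce^{-\eta\rho(\tau)}$ for any $\eta<\eta_0\log 2$, and the paper's proof explicitly accepts exactly this polynomial loss when bounding $\sup|K_{(q,\tau)}-K_\infty|\leq C\rho(\tau)2^{-\eta_0\rho(\tau)}$. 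So the linear-in-$\rho$ factor from spatial chaining is perfectly harmless, and the ``fix'' you describe (that local axes are determined up to path-length-independent error) is not needed and is also not how the paper argues.

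Second, and more substantively, your mechanism for identifying the common axis with the canonical rotation $K_0 = K_\infty$ is different from the paper's, and as stated it has a gap. You propose a stability estimate of the form $\sup_{\Gamma\cap B_{\rho(\tau)}}|\langle K',\nu_\Gamma\rangle|\geq c\,|K'-K_0|$ for normalized rotations $K'$, applied at a single fixed time. That kind of rigidity/stability statement is plausible but not proved in the paper and would require care: normalized rotational vector fields $K'=SJS^{-1}(\cdot-q)$ form a noncompact family (the base point $q$ can wander), their difference from $K_0$ grows linearly, and one would have to carefully specify in which norm and over which ball the comparison is made, and check the constant $c$ is uniform. The paper sidesteps this entirely by chaining \emph{in time}: it fixes a point at $y_1=y_2=0$ and compares $K_\tau$ with $K_{\tau-1}$ via \cite[Lemma 3.5]{Zhu}, then uses that the tangent flow at $-\infty$ is the bubble-sheet to know $K_\tau\to K_\infty=Jx$, summing a convergent geometric-type series $\sum_m 2^{-\eta_0\rho(\tau-m)}$. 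This gives the anchor without proving a stability lemma on $\Gamma$. Only after this time-chaining does the paper do the (short) spatial chaining out to radius $\rho(\tau)$, incurring the benign factor $\rho(\tau)$. Your final step converting $|\langle K_0,\nu\rangle| H$ into a pointwise bound on $u_\theta$ is consistent with the paper's $\tfrac12|u_\theta|\leq H|\langle K_\infty,\nu\rangle|$.
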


\begin{proof}
For each $\tau\leq \tau_\ast$ choose a point $q_\tau\in \bar{M_\tau}$ with $y_1=y_2=0$. By Lemma \ref{improvement} (circular improvement) the corresponding space-time point $X_\tau=(x_\tau,-e^{-\tau})\in\mathcal{M}$ in the unrescaled flow is $2^{-\eta\rho(\tau)}$-symmetric, i.e. there exists a normalized rotation vector field $K_\tau$  so that
\begin{equation}
|K_\tau|H\leq 5 \,\,\, \textrm{and}\,\, \, |\langle K_\tau, \nu\rangle|H\leq 2^{-\eta_0\rho(\tau)}   \qquad \textrm{in}\quad P(X_\tau, 100/ H(X_\tau)).
\end{equation}
By \cite[Lemma 3.5]{Zhu} and since $S\in \mathrm{SO}(4)$ by our convention, we have
\begin{equation}
   \sup_{x\in B_{10}(x_{\tau})} |K_\tau(x) - K_{\tau-1}(x)|\leq C2^{-\eta_0\rho(\tau)}.
\end{equation}
Moreover, since the tangent-flow at $-\infty$ is a bubble-sheet, the vector fields $K_\tau(x)$ converge for $\tau\to -\infty$ to $K_{\infty}(x)=Jx$. Hence, we infer that
\begin{equation}
\sup_{x\in B_{10}(x_{\tau})} |K_\tau(x) - K_{\infty}(x)| \leq  C\sum_{m=0}^{\infty}2^{-\eta_0 \rho(\tau-m)}.
\end{equation}
Recalling that $\rho(\tau)=|\tau|^\gamma$, this sum can be safely bounded by $Ce^{-\eta \rho(\tau)}$, where $\eta:=\eta_0/10$.\\
Similarly, as for any $q\in \bar{M}_{\tau}$ with $|(y_1,y_2)|\leq \rho(\tau)$ and $\tau\leq \tau_\ast$ the corresponding space-time point $X_{(q,\tau)}$ is
$C/2^{\eta_0 \rho(\tau)}$-symmetric with normalized rotation vectorfield $K_{(q,\tau)}$, we infer that 
\begin{equation}
\sup_{x\in B_{10}(x_{\tau})} |K_{(q,\tau)}(x) - K_{\infty}(x)|\leq C\rho(\tau) 2^{-\eta_0\rho(\tau) }\leq  Ce^{-\eta \rho(\tau)}.
\end{equation}
Hence, $X_{(q,\tau)}$ is $C/2^{\eta \rho(\tau)}$-symmetric with respect to $K_\infty$. We conclude that
\begin{equation}
\tfrac{1}{2}|u_{\theta}| \leq H |\langle K_\infty, \nu \rangle |\leq Ce^{-\eta \rho(\tau)}.
\end{equation}
Slightly decreasing $\eta$, this proves the proposition.
\end{proof}

\bigskip

\subsection{Evolution expansion}
As before, we assume that the neutral mode is dominant and work with the graphical radius $\rho(\tau)=|\tau|^\gamma$. In particular, we define $\hat{u}$ using this $\rho$.
The goal of this subsection is to prove:

\begin{proposition}[evolution expansion]\label{evolution equation u2} The function $\hat{u}$ evolves by
\begin{align}
   \partial_{\tau}\hat{u}=\mathcal{L}\hat{u}-\frac{1}{\sqrt{8}}\hat{u}^2-\tfrac{1}{\sqrt{8}}   \hat{u}_\theta^2-\tfrac{1}{ \sqrt{2}} \hat{u}  \hat{u}_{\theta\theta}+\hat{E},
\end{align}
where the error term satisfies the weighted $\mathcal{H}$-norm estimate
\begin{equation}
\left\langle |\hat{E}|, 1+y_1^2+y_2^2\right\rangle_{\mathcal{H}}\leq C\rho^{-1}\|\hat{u} \|_{\mathcal{H}}^2+ e^{-\rho/5}\, .
\end{equation}
\end{proposition}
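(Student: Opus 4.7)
The plan is to derive the exact nonlinear evolution equation for the untruncated $u$ from the graphical representation of the renormalized mean curvature flow, Taylor expand its nonlinearity to quadratic order in $(u,\nabla u,\nabla^2 u)$, and then transport the identity through the cutoff $\chi(r/\rho)$ while estimating every arising correction in the weighted $\mathcal H$-norm.

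To carry out the first step I would parameterize $\bar M_\tau$ over $\Gamma=\mathbb{R}^2\times S^1(\sqrt 2)$ by
\begin{equation*}
F(y_1,y_2,\theta)=\big(y_1,y_2,(\sqrt 2+u)\cos\theta,(\sqrt 2+u)\sin\theta\big),
\end{equation*}
and compute the induced metric, outward unit normal $\nu$, mean curvature $H$, and support function $\langle F,\nu\rangle$ as rational expressions in $u,\nabla u,\nabla^2 u$. The renormalized flow equation $\langle\partial_\tau F,\nu\rangle=-H+\tfrac12\langle F,\nu\rangle$ then takes the quasilinear form $\partial_\tau u=\mathcal L u+\mathcal Q[u]+\mathcal N[u]$, where $\mathcal N$ is at least cubic in $(u,\nabla u,\nabla^2 u)$. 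Since $F$ depends only linearly on the flat variables $y_1,y_2$, those directions contribute only to the linear part $\mathcal L u$ at quadratic order, so $\mathcal Q$ reduces to the quadratic nonlinearity of the two-dimensional surface-of-revolution subproblem $r=\sqrt 2+u(\theta)$; a direct expansion of the planar curvature formula for this curve should then yield
\begin{equation*}
\mathcal Q[u]=-\tfrac{1}{\sqrt 8}u^2-\tfrac{1}{\sqrt 8}u_\theta^2-\tfrac{1}{\sqrt 2}\,u\,u_{\theta\theta},
\end{equation*}
with coefficients $1/(2r_0)$ and $1/r_0$ dictated by the cylinder radius $r_0=\sqrt 2$, this being the bubble-sheet analogue of the standard neck expansion used in \cite{CHH}.

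In the second step, since $\chi$ depends only on $r=|(y_1,y_2)|$, one has $\hat u_\theta=\chi u_\theta$ and $\hat u_{\theta\theta}=\chi u_{\theta\theta}$. Multiplying the evolution equation by $\chi(r/\rho)$ and using the elementary identities $\chi u^2=\hat u^2+\chi(1-\chi)u^2$, $\chi u_\theta^2=\hat u_\theta^2+\chi(1-\chi)u_\theta^2$, and $\chi\,u\,u_{\theta\theta}=\hat u\hat u_{\theta\theta}+\chi(1-\chi)\,u\,u_{\theta\theta}$, together with the cutoff commutator already computed in the proof of Proposition \ref{prop_trunc_bubble}, should yield the claimed equation with $\hat E$ decomposing as a bulk cubic piece $\chi\mathcal N[u]$, annulus remainders $\chi(1-\chi)(\cdots)$ supported on $\{\rho/2\le r\le\rho\}$, and the linear cutoff commutator error. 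To bound $\hat E$ in the weighted norm $\langle\,\cdot\,,1+y_1^2+y_2^2\rangle_{\mathcal H}$, the graphical bound $\|u\|_{C^4(\Gamma\cap B_{2\rho})}\le\rho^{-2}$ gives the pointwise cubic estimate $|\mathcal N[u]|\le C\rho^{-2}(u^2+|\nabla u|^2+|\nabla^2 u|^2)$; combined with the weighted $L^2$-estimate and the inverse Poincar\'e inequality of \cite[Proposition 4.4]{CHH_wing}, and absorbing the polynomial weight $1+y_1^2+y_2^2$ at the cost of one power of $\rho$, I expect this to yield a bound of the form $C\rho^{-1}\|\hat u\|_{\mathcal H}^2$. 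The annulus and cutoff-commutator contributions are supported on $\{\rho/2\le r\le\rho\}$, where $|u|\le\rho^{-2}$ and the Gaussian factor satisfies $e^{-r^2/4}\le e^{-\rho^2/16}$, so the Gaussian comfortably dominates the polynomial weight and all $C^0$ bounds and produces a contribution bounded by $e^{-\rho/5}$ for $\tau\ll 0$.

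The hard part will be the first step: pinning down exactly the quadratic coefficients $-1/\sqrt 8$, $-1/\sqrt 8$, $-1/\sqrt 2$. Once one observes that the flat $y$-directions give no quadratic contribution, this reduces to an elementary but careful expansion of both $H$ and $\tfrac12\langle F,\nu\rangle$ consistently to second order in $u,u_\theta,u_{\theta\theta}$, from which the three numerical constants can be read off. The subsequent weighted $L^2$-bookkeeping is then a routine extension of the tools already introduced in \cite{CHH,CHH_wing}.
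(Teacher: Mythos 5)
Your proposal is correct and follows essentially the same route as the paper's proof: the exact graphical evolution over $\mathbb{R}^2\times S^1(\sqrt{2})$ (the paper's Appendix A), a second-order expansion yielding precisely $-\tfrac{1}{\sqrt{8}}u^2-\tfrac{1}{\sqrt{8}}u_\theta^2-\tfrac{1}{\sqrt{2}}u\,u_{\theta\theta}$ with a quasilinear cubic remainder, the cutoff identities exploiting that $\chi$ is $\theta$-independent, and the weighted error bound via the $C^4$-graphical estimate, \cite[Proposition 4.4]{CHH_wing} and the Gaussian tail. The only bookkeeping caveat is that absorbing the weight $1+y_1^2+y_2^2$ at the cost of a single power of $\rho$ is valid only where $r\lesssim\rho^{1/2}$; the paper handles $\rho^{1/2}\le r\le \rho$ by the coarse pointwise bound together with the Gaussian factor $e^{-r^2/4}\le e^{-\rho/4}$, which is exactly the domination argument you invoke on the annulus, so your scheme closes after this minor adjustment.
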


\begin{proof}
By Proposition \ref{MCF_graphical_equation} (evolution over cylinder) the renormalized graphical mean curvature evolution of $u(\cdot,\tau)$ over the cylinder $\Gamma=\mathbb{R}^2\times S^{1}(\sqrt{2})$ is given by
\begin{align}\label{equation_u} 
    \partial_{\tau}u=& \frac{\sum_{ \alpha,\beta=1}^2 A_{\alpha\beta}\partial_{\alpha}\partial_{\beta}u+(1+|\partial u|^2) u_{\theta\theta}-2\sum_{\alpha=1}^2u_\theta \partial_{\alpha} u  \partial_{\alpha}u_\theta}{(1+|\partial u|^2)(\sqrt{2}+u)^2+u_\theta^2}\nonumber\\
    &\frac{-(\sqrt{2}+u)^{-1}u_\theta^{2}}{(1+|\partial u|^2)(\sqrt{2}+u)^2+u_\theta^2}-\frac{1}{\sqrt{2}+u}+\frac{1}{2}\left(\sqrt{2}+u- \sum_{\alpha=1}^2 y_\alpha \partial_\alpha u\right),
\end{align}
where 
\begin{equation}\label{A}
    A_{\alpha\beta}=[(1+|\partial u|^2)(\sqrt{2}+u)^2+u_\theta^2]\delta_{\alpha\beta}-(\sqrt{2}+u)^{2}\partial_{\alpha}u \partial_{\beta}u.
\end{equation}
This implies
\begin{align}
   \partial_\tau u=\mathcal{L}  u+\mathcal{Q}(u)+E,
\end{align}
where
\begin{equation}
\mathcal{Q}(u)=-\tfrac{1}{\sqrt{8}}  u^2-\tfrac{1}{\sqrt{8}}   u_\theta^2-\tfrac{1}{ \sqrt{2}} u   u_{\theta\theta},
\end{equation}
and
\begin{equation}
|E|\leq C|\nabla u|^2(|u|+|\nabla u|+|\nabla^2 u|)+C|u|^2|\nabla^2u|+C|u|^3.
\end{equation}
Next, concerning the cutoff function, using $|\dot{\rho}|\leq \rho$ we compute
\begin{equation}\label{lin_est}
\begin{aligned}
\big|(\partial_\tau-\mathcal{L})( \chi u)-\chi (\partial_\tau -\mathcal{L})u \big| \leq 
\rho^{-2}|\chi''||u|+2\rho^{-1}|\chi'|(|\nabla u|+|ru|),
\end{aligned}
\end{equation}
where $r=|(y_1,y_2)|$. Similarly, using also that $\rho$ is $\theta$-independent we get
\begin{equation}\label{quad_est}
|\mathcal{Q}(\chi u)-\chi\mathcal{Q}(u)|=|\chi^2\mathcal{Q}(u)-\chi\mathcal{Q}(u)|= \chi (1-\chi)|\mathcal{Q}(u)|.
\end{equation}
Putting everything together, this shows that
\begin{equation}
\partial_\tau \hat u = \mathcal{L}\hat u-\tfrac{1}{2\sqrt{2}}\hat u^2-\tfrac{1}{2\sqrt{2}} \hat u_\theta^2-\tfrac{1}{ \sqrt{2}}\hat u \hat u_{\theta\theta} +\hat{E},
\end{equation}
where the error term satisfies the pointwise estimate
\begin{align}\label{est_eq}
|\hat{E}|\leq &C\chi(|u|+|\nabla u|)^2(|u|+|\nabla u|+|\nabla^2u|)\nonumber\\
&+ C |\chi'|\rho^{-1} \big(|\nabla u|+|ru|\big)+ C |\chi''|\rho^{-2}|u|\\
&+ C \chi(1-\chi)\big(|u|^2+|\nabla u|^2+|\nabla^2u|^2\big).\nonumber
\end{align}
To bound this in the $(1+r^2)$-weighted $\mathcal{H}$-norm, first observe that $\hat{E}$ is supported in the ball $\{r\leq \rho\}$, so in particular by the definition of graphical radius we have the inequality $|u|+|\nabla u| + |\nabla^2 u| \leq \rho^{-2}$
at our disposal.
Using this, for $r\leq \rho^{1/2}$ we can estimate 
\begin{equation}
|\hat{E}|(1+r^2)\leq  C\rho^{-1} \left(|u|^2 + |\nabla u|^2\right).
\end{equation}
Together with \cite[Proposition 4.4]{CHH_wing} this implies
\begin{equation}
\int_{r\leq \rho^{1/2}}|\hat{E}|(1+r^2)\, e^{-\frac{r^2}{4}} \, dy_1dy_2 \leq C\rho^{-1}\|\hat{u}\|_{\mathcal{H}}^2\, .
\end{equation}
Finally, for $r\geq \rho^{1/2}$ the coarse bound $|\hat{E}|(1+r^2) \leq C \rho^2$ yields the tail estimate
\begin{align}
\int_{\rho/2\leq r\leq \rho}|\hat{E}|(1+r^2) \, e^{-\frac{r^2}{4}}\, dy_1dy_2  \leq  e^{-\rho/5} \, .
\end{align}
This finishes the proof of the proposition.
\end{proof}

\bigskip

\section{Proof of the bubble-sheet quantization theorem}\label{sec_quant_thm}
In this section, we prove Theorem \ref{spectral theorem} (bubble-sheet quantization). As before, we work with the graphical radius $\rho(\tau)=|\tau|^\gamma$, and in particular define $\hat{u}$ and $U_\pm,U_0$ with respect to this $\rho$. By 
Proposition \ref{mz.ode.fine.bubble-sheet} (Merle-Zaag alternative) for $\tau\to -\infty$ either the neutral mode is dominant, i.e.
$U_-+U_+=o(U_0)$, 
or the unstable mode is dominant, i.e.
$U_-+U_0\leq C\rho^{-1}U_+$.\\

If the unstable mode is dominant, then by \eqref{U_PNM_system} we get
$\dot{U}_{+} \geq \tfrac{1}{2}U_{+}$  for all $\tau\ll 0$. 
Integrating this differential inequality yields $U_+(\tau) \leq C e^{\tau/2}$. Thus, recalling that $U_+ = \| \mathcal{P}_+ \hat{u}\|_{\mathcal{H}}^2$ and using again the assumption that the unstable mode is dominant we infer that $\| \hat{u} \|_{\mathcal{H}} \leq Ce^{\tau/4}$. Together with standard interpolation inequalities this implies that for any $R<\infty$ and $k<\infty$  we have
$\| u(\cdot,\tau) \|_{C^{k}(B_R)} \leq Ce^{\tau/5}$. Hence, if the the unstable mode is dominant then the conclusion of Theorem \ref{spectral theorem} holds with $Q=0$. We can thus assume from now on that the neutral mode is dominant, i.e.
\begin{equation}\label{neu_dom}
U_-+U_+=o(U_0).
\end{equation}
Assuming also that the flow is not a round shrinking cylinder our goal is to show that the conclusion of Theorem \ref{spectral theorem} holds for some $Q$ with $\mathrm{rk}(Q)\geq 1$.

\subsection{Derivation of the spectral ODEs}
In this subsection, we consider the expansion
\begin{equation}\label{expansion_123}
 \hat{u} = \sum_{j=1}^3 \alpha_j \psi_j +w,
\end{equation}
with respect to the eigenfunctions 
\begin{equation}\label{phi123}
    \psi_{1}=y^2_{1}-2,\quad  \psi_{2}=y^2_{2}-2,\quad  \psi_{3}=2y_{1}y_{2}.
\end{equation}
Hence, the spectral coefficients $\vec{\alpha}=(\alpha_1,\alpha_2,\alpha_3)$ are given by
\begin{equation}\label{def_exp_coeffs}
\alpha_j = \|\psi_j\|_{\mathcal{H}}^{-2} \langle  \psi_j,\hat{u}, \rangle_{\mathcal{H}}.
\end{equation}
The goal is to derive the following ODEs for the spectral coefficients:
\begin{proposition}[spectral ODEs]\label{odes-1}
The spectral coefficients $\alpha_{j}(\tau)$ satisfy the following system of ODEs:
\begin{equation}\label{odes0}
 \begin{cases}
   \dot{\alpha}_{1}=-\sqrt{8}(\alpha^2_{1}+\alpha_{3}^2)+o(|\vec{\alpha}|^2+|\tau|^{-100})\\
   \dot{\alpha}_{2}=-\sqrt{8}(\alpha^2_{2}+\alpha_{3}^2)+o(|\vec{\alpha}|^2+|\tau|^{-100})\\
    \dot{\alpha}_{3}=-\sqrt{8}(\alpha_{1}+\alpha_2)\alpha_{3}+o(|\vec{\alpha}|^2+|\tau|^{-100})\\
    \end{cases}
\end{equation}
\end{proposition}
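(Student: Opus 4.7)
\emph{Setup.} The plan is to differentiate the identity $\alpha_j(\tau) = \|\psi_j\|_{\mathcal{H}}^{-2}\langle \hat{u}(\cdot,\tau),\psi_j\rangle_{\mathcal{H}}$ in $\tau$ and substitute the evolution equation from Proposition \ref{evolution equation u2}. Since $\mathcal{L}\psi_j=0$ and $\mathcal{L}$ is self-adjoint on $\mathcal{H}$, the linear term drops out, leaving
\begin{equation}
\|\psi_j\|_{\mathcal{H}}^{2}\,\dot{\alpha}_j = -\tfrac{1}{\sqrt{8}}\langle \hat{u}^2, \psi_j\rangle_{\mathcal{H}} - \tfrac{1}{\sqrt{8}}\langle \hat{u}_\theta^2, \psi_j\rangle_{\mathcal{H}} - \tfrac{1}{\sqrt{2}}\langle \hat{u}\hat{u}_{\theta\theta}, \psi_j\rangle_{\mathcal{H}} + \langle \hat{E}, \psi_j\rangle_{\mathcal{H}}.
\end{equation}
By Proposition \ref{utheta} (almost circular symmetry), the two $\theta$-derivative terms are pointwise bounded by $Ce^{-2\eta\rho(\tau)}$ on the support of the cutoff, and since $\rho(\tau)=|\tau|^\gamma$ their pairings against the polynomial $\psi_j$ are $O(|\tau|^{-100})$. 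The $\hat{E}$ term is dispatched directly by the weighted estimate in Proposition \ref{evolution equation u2} together with $|\psi_j|\le C(1+y_1^2+y_2^2)$, giving $|\langle\hat E,\psi_j\rangle_{\mathcal{H}}|\le C\rho^{-1}\|\hat{u}\|^2_{\mathcal{H}}+e^{-\rho/5}=o(|\vec\alpha|^2)+O(|\tau|^{-100})$.

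\emph{Main quadratic computation.} I would then decompose $\hat u = \tilde u + w$, where $\tilde u:=\sum_i\alpha_i\psi_i$ and $w\perp\mathrm{span}(\psi_1,\psi_2,\psi_3)$ in $\mathcal{H}$. The leading piece of $-\tfrac{1}{\sqrt 8}\langle\hat u^2,\psi_j\rangle_{\mathcal{H}}$ is then $-\tfrac{1}{\sqrt 8}\sum_{i,k}\alpha_i\alpha_k\langle\psi_i\psi_k,\psi_j\rangle_{\mathcal{H}}$, a finite list of elementary cubic Gaussian moments over $\mathbb{R}^2\times S^1(\sqrt 2)$. Since $\psi_1,\psi_2,\psi_3$ are $\theta$-independent, the $\theta$-integral factors out trivially and one is reduced to polynomial moments of independent Gaussians in $y_1,y_2$ of variance $2$. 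A parity argument kills $\langle\psi_1^2\psi_2\rangle_{\mathcal{H}}$, $\langle\psi_1\psi_2^2\rangle_{\mathcal{H}}$, $\langle\psi_1\psi_2\psi_3\rangle_{\mathcal{H}}$, $\langle\psi_3^3\rangle_{\mathcal{H}}$, $\langle\psi_1^2\psi_3\rangle_{\mathcal{H}}$, and $\langle\psi_2^2\psi_3\rangle_{\mathcal{H}}$, while the surviving ones $\langle\psi_1^3\rangle_{\mathcal{H}}$, $\langle\psi_2^3\rangle_{\mathcal{H}}$, $\langle\psi_1\psi_3^2\rangle_{\mathcal{H}}$, $\langle\psi_2\psi_3^2\rangle_{\mathcal{H}}$ (computed with $\langle y_1^{2k}\rangle\propto (2k-1)!!\,2^k$) produce, after division by $\|\psi_j\|_{\mathcal{H}}^2$ and insertion of $-\tfrac{1}{\sqrt 8}$, precisely the claimed main terms $-\sqrt 8(\alpha_1^2+\alpha_3^2)$, $-\sqrt 8(\alpha_2^2+\alpha_3^2)$, and $-\sqrt 8(\alpha_1+\alpha_2)\alpha_3$.

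\emph{Cross term estimates.} It remains to show $|\langle 2\tilde u w + w^2,\psi_j\rangle_{\mathcal{H}}| = o(|\vec\alpha|^2+|\tau|^{-100})$. First, the Merle--Zaag alternative in the neutral-dominant case \eqref{neu_dom} ensures that the $\mathcal{H}_+\oplus\mathcal{H}_-$ parts of $w$ are $o(|\vec\alpha|)$ in $\mathcal{H}$-norm. Second, the remaining neutral directions of $w$ lie in $\mathrm{span}\{y_i\cos\theta,y_i\sin\theta\}$, which are the $\theta$-dependent Hermite modes; their coefficients are exponentially small by integration of the pointwise bound in Proposition \ref{utheta}, yielding an overall $\|w\|_\mathcal{H}= o(|\vec\alpha|)+O(e^{-\eta\rho})$. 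Cauchy--Schwarz against the cubic weight $\tilde u\psi_j$, whose $\mathcal{H}$-norm is $O(|\vec\alpha|)$ by a Gaussian moment calculation, combined with the polynomial-weighted $L^2$ estimate from \cite[Proposition 4.4]{CHH_wing}, then bounds $\langle\tilde u w,\psi_j\rangle_{\mathcal{H}}$ by $o(|\vec\alpha|^2)+O(|\tau|^{-100})$. The $\langle w^2,\psi_j\rangle_{\mathcal{H}}$ contribution is handled analogously, using in addition the pointwise admissibility bound $|\hat u|\le \rho^{-2}$ to trade one $\mathcal{H}$-factor of $w$ for an $L^\infty$-factor.

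\emph{Main obstacle.} The delicate step will be the cross term $\langle \tilde u w,\psi_j\rangle_{\mathcal{H}}$: bare $\mathcal{H}$-control of $w$ pairs badly with the cubic polynomial $\tilde u\psi_j$, so one must carefully combine the Merle--Zaag smallness in the non-neutral directions, the exponential smallness of the rotational neutral modes from almost symmetry, and the polynomial-weighted inverse-Poincar\'e estimates of \cite{CHH_wing} to recover the required $o(1)$ gain without losing powers against the polynomial weight.
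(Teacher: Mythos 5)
Your overall structure — differentiate $\alpha_j$, kill the linear term by $\mathcal L\psi_j=0$, handle the $\theta$-terms via almost circular symmetry, expand $\hat u=\tilde u+w$, and evaluate the surviving cubic Gaussian moments — matches the paper, and your moment computation (parity kills the mixed terms, the survivors give $-\sqrt8(\alpha_1^2+\alpha_3^2)$ etc.) is correct. Two small imprecisions: for $\hat u\hat u_{\theta\theta}$ you only have a pointwise bound on $u_\theta$, not $u_{\theta\theta}$, so you should write $\hat u\hat u_{\theta\theta}=(\hat u\hat u_\theta)_\theta-\hat u_\theta^2$ and integrate by parts in $\theta$ as the paper does; and the cross term $\langle\tilde u w,\psi_j\rangle_{\mathcal H}$ is actually \emph{not} the delicate step — since $\tilde u\psi_j$ is a fixed degree-$4$ polynomial with $\mathcal H$-norm $O(|\vec\alpha|)$, bare Cauchy--Schwarz against $\|w\|_{\mathcal H}=o(|\vec\alpha|)+O(e^{-\eta\rho/2})$ already gives $o(|\vec\alpha|^2)+O(|\tau|^{-100})$.

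The genuine gap is the $\langle w^2,\psi_j\rangle_{\mathcal H}$ term, and your proposed resolution fails. You suggest trading one $\mathcal H$-factor of $w$ for an $L^\infty$-factor via the admissibility bound $|\hat u|\le\rho^{-2}$; this gives at best $|\langle w^2,\psi_j\rangle_{\mathcal H}|\le C\rho^{-2}\|w\|_{\mathcal H}\le C\rho^{-2}\bigl(o(|\vec\alpha|)+e^{-\eta\rho/2}\bigr)$. But $\rho(\tau)=|\tau|^\gamma$ with $\gamma$ very small (it comes from the Lojasiewicz exponent), while $|\vec\alpha|\le C\|\hat u\|_{\mathcal H}\le C\rho^{-5}$, so $\rho^{-2}\gg|\vec\alpha|$; hence $\rho^{-2}\,o(|\vec\alpha|)$ is \emph{not} $o(|\vec\alpha|^2)$, and it is also far larger than $|\tau|^{-100}$ since $4\gamma\ll 100$. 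Similarly, citing \cite[Proposition 4.4]{CHH_wing} only gives the inverse-Poincar\'e bound $\langle\hat u^2,1+r^2\rangle_{\mathcal H}\le C\|\hat u\|_{\mathcal H}^2$, which after subtracting $\tilde u$ yields $\langle w^2,1+r^2\rangle_{\mathcal H}=O(|\vec\alpha|^2)$, with a $O$ rather than the required $o$. The paper closes this gap with a dedicated lemma (the remainder estimate) that derives a gradient bound $\|\nabla w\|_{\mathcal H}^2=o(|\vec\alpha|^2)+Ce^{-\eta\rho/2}$ from the projected evolution equation $\partial_\tau w=\mathcal L w+g$ via parabolic energy estimates on a unit time interval, and then applies Ecker's weighted Sobolev inequality to upgrade $\|w\|_{\mathcal H}$ and $\|\nabla w\|_{\mathcal H}$ control to the polynomially weighted bound $\langle w^2,1+r^2\rangle_{\mathcal H}=o(|\vec\alpha|^2+e^{-\eta\rho/3})$. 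Without this parabolic/Ecker step your argument cannot produce the needed $o$-gain on the $\langle w^2,\psi_j\rangle_{\mathcal H}$ term.
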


To show this, we start with the following remainder estimate:

\begin{lemma}[remainder estimate]\label{rotationdecay}
The remainder $w$ satisfies the weighted $\mathcal{H}$-norm estimate
\begin{equation}\label{remainder1}
\left\langle w^2, 1+y_1^2+y_2^2 \right\rangle_{\mathcal{H}} =  o(|\vec{\alpha}|^2+e^{-\eta\rho/3}).
\end{equation}
\end{lemma}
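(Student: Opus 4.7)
The plan is to decompose $w$ into three mutually $\mathcal{H}$-orthogonal pieces, estimate each one in the $\mathcal{H}$-norm, and then upgrade to the $(1+y_1^2+y_2^2)$-weighted estimate. Specifically, write
\begin{equation*}
w = w_+ + w_0^\theta + w_-,
\end{equation*}
where $w_\pm := \mathcal{P}_\pm \hat{u}$ are the unstable and stable projections, and $w_0^\theta$ is the $\mathcal{H}$-projection of $\hat{u}$ onto the four-dimensional subspace $\mathcal{H}_0^\theta := \mathrm{span}\{y_1\cos\theta, y_1\sin\theta, y_2\cos\theta, y_2\sin\theta\}$, i.e. the orthogonal complement of $\mathrm{span}\{\psi_1,\psi_2,\psi_3\}$ inside $\mathcal{H}_0$.

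First, I would bound the angular neutral piece $w_0^\theta$ using Proposition \ref{utheta} (almost circular symmetry). Writing $\cos\theta = \partial_\theta\sin\theta$ and integrating by parts in $\theta$ yields
\begin{equation*}
\langle \hat{u},\, y_1\cos\theta\rangle_{\mathcal{H}} = -\frac{1}{(4\pi)^{3/2}}\int_\Gamma \hat{u}_\theta\,y_1\sin\theta\, e^{-|q|^2/4}\,dq,
\end{equation*}
and since $\hat{u}_\theta = \chi(r/\rho)\,u_\theta$ is supported in $\{r\leq \rho\}$ where $|u_\theta|\leq e^{-\eta\rho}$ by Proposition \ref{utheta}, the polynomial factor $y_1$ is absorbed by the Gaussian and this inner product is bounded by $Ce^{-\eta\rho}$. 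The other three angular spectral coefficients are handled identically, giving $\|w_0^\theta\|_{\mathcal{H}} \leq Ce^{-\eta\rho}$. Since $\mathcal{H}_0^\theta$ is finite-dimensional and spanned by polynomials of degree at most two, the weighted and unweighted norms are equivalent on it, so $\langle (w_0^\theta)^2, 1+y_1^2+y_2^2\rangle_{\mathcal{H}} \leq Ce^{-2\eta\rho}$, well within the target $e^{-\eta\rho/3}$.

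Next, I would handle the unstable and stable pieces via the Merle-Zaag alternative. By the neutral-dominance assumption \eqref{neu_dom} together with Proposition \ref{mz.ode.fine.bubble-sheet},
\begin{equation*}
\|w_+\|_{\mathcal{H}}^2 + \|w_-\|_{\mathcal{H}}^2 = U_+ + U_- = o(U_0) = o\!\left(|\vec{\alpha}|^2 + \|w_0^\theta\|_{\mathcal{H}}^2\right) = o(|\vec{\alpha}|^2) + o(e^{-2\eta\rho}).
\end{equation*}
The unstable space $\mathcal{H}_+$ is five-dimensional and spanned by polynomials of degree at most one, so the $(1+y_1^2+y_2^2)$-weighted norm is equivalent to the $\mathcal{H}$-norm on it and the bound transfers directly. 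For the stable part $w_-$, I would pass from the $\mathcal{H}$-bound to the weighted bound by combining the weighted $L^2$-estimate of \cite[Proposition 4.4]{CHH_wing} with the graphicality $\|u\|_{C^4(\Gamma\cap B_{2\rho})}\leq \rho^{-2}$ guaranteed by Proposition \ref{prop_admiss}; the non-compactly-supported polynomial part of $w$ coming from $-\sum_j\alpha_j\psi_j$ outside $B_\rho$ contributes a Gaussian tail of order $|\vec{\alpha}|^2 e^{-\rho^2/5}$, which is trivially absorbed.

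The principal obstacle will be the transition from the unweighted $\mathcal{H}$-norm to the $(1+y_1^2+y_2^2)$-weighted norm on the infinite-dimensional stable space $\mathcal{H}_-$, since the polynomial weight can in principle amplify modes supported far from the origin. The argument closes because $\hat{u}$ is supported in $B_\rho$ with uniform $C^4$-bounds coming from admissibility of $\rho$, so only a controlled polynomial amount of weight can couple to the stable part, exactly what the weighted $L^2$-estimate of \cite{CHH_wing} is designed to exploit.
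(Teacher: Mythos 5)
Your treatment of the finite-dimensional pieces is fine and essentially matches the paper: the angular neutral coefficients $\alpha_4,\dots,\alpha_7$ are bounded by $Ce^{-\eta\rho}$ via $\psi_4=\partial_\theta\psi_5$ and integration by parts together with Proposition \ref{utheta}, and the Merle--Zaag dominance \eqref{neu_dom} gives $U_++U_-=o(|\vec{\alpha}|^2)+o(e^{-2\eta\rho})$ in the \emph{unweighted} $\mathcal{H}$-norm; on $\mathcal{H}_+$ and on the angular neutral span the weight is harmless by finite-dimensionality. The genuine gap is exactly at the point you flag as ``the principal obstacle'': upgrading the unweighted $o(\cdot)$ bound on the infinite-dimensional stable part to the $(1+y_1^2+y_2^2)$-weighted bound. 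Your proposed resolution --- citing the weighted $L^2$-estimate of \cite[Proposition 4.4]{CHH_wing} together with the support and $C^4$-bounds of $\hat{u}$ --- does not close this. That estimate (and the pointwise/graphical control) pertains to $u$ and $\hat u$ themselves; the spectral projection $w_-=\mathcal{P}_-\hat u$ inherits neither the compact support nor any pointwise bound, so there is no ``controlled coupling of the weight to the stable part'' to invoke. Moreover, a weighted bound on $\hat u$ only yields $\langle \hat u^2,1+r^2\rangle_{\mathcal{H}}\leq C\|\hat u\|^2_{\mathcal{H}}=O(|\vec{\alpha}|^2)$, which is useless here: the whole point is to retain the smallness $o(|\vec{\alpha}|^2)$ after multiplying by a weight that amplifies high Hermite modes. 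Quantitatively, the $(1+r^2)$-weighted Gaussian $L^2$-norm is comparable to the Gaussian $H^1$-norm, so $L^2$-smallness of $w$ alone cannot give the conclusion; some derivative control on $w$ (or an explicit duality argument shifting the weight onto quantities you do control) is indispensable, and your proposal supplies neither.

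The paper's proof provides precisely this missing ingredient: it projects the evolution expansion of Proposition \ref{evolution equation u2} onto the orthogonal complement of $\mathrm{span}\{\psi_1,\psi_2,\psi_3\}$ to get $\partial_\tau w=\mathcal{L}w+g$ with $\|g\|^2_{\mathcal{H}}=o(|\vec{\alpha}|^2)+Ce^{-\eta\rho}$, runs a parabolic energy argument over a unit time interval to obtain the gradient estimate $\|\nabla w(\tau)\|^2_{\mathcal{H}}=o\bigl(\max_{\tau'\in[\tau,\tau+1]}|\vec{\alpha}(\tau')|^2\bigr)+Ce^{-\eta\rho/2}$, uses the Merle--Zaag ODEs \eqref{U_PNM_system} to replace the maximum by $|\vec{\alpha}(\tau)|^2$, and only then applies Ecker's weighted Sobolev inequality \cite[page 109]{Ecker_logsob} to convert the $\mathcal{H}$- and gradient bounds into the $(1+y_1^2+y_2^2)$-weighted estimate. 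To repair your argument you would need to either reproduce this gradient estimate (there is no way to extract it from the Merle--Zaag $L^2$ quantities alone) or set up an explicit Cauchy--Schwarz/duality step such as $\langle w_-^2,1+r^2\rangle_{\mathcal{H}}\leq\|w_-\|_{\mathcal{H}}\,\|(1+r^2)w_-\|_{\mathcal{H}}$ and then bound $\|(1+r^2)w_-\|_{\mathcal{H}}$ by weighted norms of $\hat u$ and of the finite-dimensional projections --- a route you hint at but do not carry out, and which would still require verifying that the weighted estimate of \cite{CHH_wing} holds with the higher weight power this needs.
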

\begin{proof} Recall that $\mathcal{H}_0$ is 7-dimensional, spanned by the 3 eigenfunctions from \eqref{phi123} together with the following 4 eigenfunctions:
\begin{equation}\label{phi4567}
    \psi_{4}=y_{1}\cos\theta\quad \psi_{5}=y_{1}\sin\theta\quad \psi_{6}=y_{2}\cos\theta\quad \psi_{7}=y_{2}\sin\theta \, .
\end{equation}
If we consider the remainder of the full $\mathcal{H}_0$-expansion,
\begin{equation}
\tilde{w}:= \hat{u}- \sum_{j=1}^7 \alpha_j \psi_j, \qquad \textrm{with} \quad \alpha_j = \|\psi_j\|_{\mathcal{H}}^{-2}\langle \psi_j ,\hat{u} \rangle_{\mathcal{H}},
\end{equation}
where now $j=1,\ldots, 7$, then by assumption \eqref{neu_dom} we have
\begin{equation}
\|\tilde{w}\|_{\mathcal{H}}^2 = o\left( \sum_{j=1}^7 \alpha_j^2\right).
\end{equation}
On the other hand, using the relation $\psi_{4}= \partial_\theta \psi_{5}$ we can estimate
\begin{equation}
|\langle\psi_4,\hat{u}\rangle_{\mathcal{H}}|= |\langle  \psi_5,\partial_\theta \hat{u}\rangle_{\mathcal{H}} | \leq Ce^{-\eta\rho},
\end{equation}
where we applied Proposition \ref{utheta} (almost circular symmetry) in the last step.
Arguing similarly for $\alpha_5,\ldots,\alpha_7$, it follows that
\begin{equation}\label{remainder1}
\|w\|_{\mathcal{H}}^2 = o(|\vec{\alpha}|^2) + Ce^{-\eta\rho}.
\end{equation}
We will next prove a gradient estimate for $w$. To this end, we project the equation from Proposition \ref{evolution equation u2} (evolution equation) to the orthogonal complement of $\mathrm{span}\{ \psi_1,\psi_2,\psi_3\}$, and argue as above to obtain
\begin{equation}\label{eq_w}
\partial_\tau w = \mathcal{L} w + g,
\end{equation}
where
\begin{equation}\label{est_g}
\| g\|_{\mathcal{H}}^2= o(|\vec{\alpha}|^2) + Ce^{-\eta\rho}.
\end{equation}
Now, given $\tau_\ast\ll 0$, using \eqref{eq_w} and integration by parts we compute
\begin{align}
\frac{d}{d\tau}\int {e^{\tau_\ast-\tau}} w^2e^{-q^2/4} &= \int  {e^{\tau_\ast-\tau}}(2wg-2|\nabla w|^2 -w^2)\, e^{-q^2/4}\nonumber\\
& \leq \int  {e^{\tau_\ast-\tau}}(g^2-2|\nabla w|^2)\, e^{-q^2/4},
\end{align}
and
\begin{align}
 \frac{d}{d\tau}\int (\tau-\tau_\ast)|\nabla w|^2 e^{-q^2/4}
 &=\int \left(|\nabla w|^2  -2(\tau-\tau_\ast) (\mathcal{L}  w)(\mathcal{L}w+g)\right)\, e^{-q^2/4}\nonumber\\
 &\leq\int \left(|\nabla w|^2  +\tfrac{1}{2}(\tau-\tau_\ast) g^2)\right)\, e^{-q^2/4}\, .
\end{align}
For $\tau\in [\tau_\ast-1,\tau_\ast]$ this yields
\begin{align}
 \frac{d}{d\tau}\int \left((\tau-\tau_\ast)|\nabla w|^2+\tfrac{e^{\tau_\ast-\tau}}{2} w^2\right)\, e^{-q^2/4}\leq \int g^2\, e^{-q^2/4}\, .
 \end{align}
Hence, together with \eqref{remainder1} and \eqref{est_g} we infer that
\begin{equation}
\|\nabla w(\tau)\|_{\mathcal{H}}^2 = o\left(\max_{\tau' \in [\tau,\tau+1]}|\vec{\alpha}(\tau')|^2\right) + Ce^{-\eta\rho(\tau)/2}.
\end{equation}
Finally, by the Merle-Zaag ODEs \eqref{U_PNM_system} for $\tau\ll 0$ we have
\begin{equation}
\max_{\tau' \in [\tau,\tau+1]}|\vec{\alpha}(\tau')|^2\leq 2 |\vec{\alpha}(\tau)|^2\, .
\end{equation}
Together with Ecker's weighted Sobolev inequality \cite[page 109]{Ecker_logsob}, this implies the assertion.
\end{proof}

We can now prove the main result of this subsection:

\begin{proof}[{Proof of Proposition \ref{odes-1}}]
Using Proposition \ref{evolution equation u2} (evolution expansion) we get
\begin{align}\label{ak}
\dot{\alpha}_k=  \|\psi_k\|_{\mathcal{H}}^{-2}\left\langle  \mathcal{L}\hat{u}-\frac{1}{\sqrt{8}}\hat{u}^2-\tfrac{1}{\sqrt{8}}   \hat{u}_\theta^2-\tfrac{1}{ \sqrt{2}} \hat{u}  \hat{u}_{\theta\theta}+\hat{E},\psi_k\right\rangle_{\mathcal{H}}\, ,
\end{align}
where
\begin{equation}
|\langle\hat{E}, \psi_k \rangle_{\mathcal{H}}| \leq C\left(\rho^{-1}\|\hat{u} \|_{\mathcal{H}}^2+ e^{- \rho/5}\right)\, .
\end{equation}
Next, using $\mathcal{L}\psi_k =0$ and integration by parts we see that
\begin{equation}
\langle \mathcal{L}\hat{u},\psi_k \rangle_{\mathcal{H}} = 0.
\end{equation}
Moreover, using Proposition \ref{utheta} (almost circular symmetry), writing $\hat{u}\hat{u}_{\theta\theta}=(\hat{u}\hat{u}_\theta)_\theta-\hat{u}_\theta^2$, and using integration by parts we can estimate
\begin{equation}
|\langle \hat{u}_\theta^2,\psi_k \rangle_{\mathcal{H}}| + |\langle \hat{u}\hat{u}_{\theta\theta},\psi_k \rangle_{\mathcal{H}}|  \leq  C e^{-\eta\rho}.
\end{equation}
Furthermore, remembering the expansion \eqref{expansion_123} we compute
\begin{equation}
\langle \hat{u}^2,\psi_k \rangle_{\mathcal{H}} = \sum_{i,j=1}^3 \alpha_i\alpha_j \langle \psi_i \psi_j, \psi_k \rangle_\mathcal{H} + 2 \sum_{i=1}^3 \alpha_i \langle \psi_i \psi_k ,w\rangle_\mathcal{H}+\langle w^2, \psi_k \rangle_\mathcal{H}\, .
\end{equation}
By Lemma \ref{rotationdecay} (remainder estimates) we have
\begin{equation}
 2 \sum_{i=1}^3 \alpha_i \langle \psi_i \psi_k ,w\rangle_\mathcal{H}+\langle w^2, \psi_k \rangle_\mathcal{H} = o(|\vec{\alpha}|^2+e^{-\eta\rho/3})\, .
\end{equation}
Combining the above facts we infer that
\begin{equation}
\dot{\alpha}_k= -\frac{1}{\sqrt{8}}\sum_{i,j=1}^3  \|\psi_k\|_{\mathcal{H}}^{-2} \langle \psi_i \psi_j, \psi_k\rangle_{\mathcal{H}} \alpha_i \alpha_j + o(|\vec{\alpha}|^2+|\tau|^{-100}).
\end{equation}
Finally, for functions on our bubble-sheet $\Gamma=\mathbb{R}^2\times S^1(\sqrt{2})$ that are independent of $\theta$ the Gaussian inner product is explicitly given by
\begin{equation}
    \langle f, g\rangle_{\mathcal{H}}= (8e\pi)^{-1/2}\int_{\mathbb{R}^2} e^{-\frac{y_1^2+y_2^2}{4}}f(y_1,y_2)g(y_1,y_2) \, dy_1dy_2\, .
\end{equation}
Hence, an elementary computation shows that
\begin{equation}\label{I1}
     \|\psi_{1}\|^2_{\mathcal{H}}=\|\psi_{2}\|^2_{\mathcal{H}}=\tfrac{1}{2} \|\psi_{3}\|^2_{\mathcal{H}} \,
\end{equation}
and that up to permutation of indices the only nonvanishing coefficients are
\begin{equation}\label{I2}
    \langle \psi_1\psi_1, \psi_1 \rangle_\mathcal{H}=\langle \psi_2\psi_2, \psi_2 \rangle_\mathcal{H}=8 \|\psi_{1}\|^2_{\mathcal{H}} ,
\end{equation}
and
\begin{equation}\label{I3}
     \langle\psi_3\psi_3,\psi_{1}\rangle_{\mathcal{H}}= \langle \psi_{3}\psi_3,\psi_{2}\rangle_{\mathcal{H}}=4 \|\psi_{3}\|^2_{\mathcal{H}}.
\end{equation}
Putting things together, this proves the proposition.
\end{proof}

\bigskip

\subsection{Quantized asymptotics of the spectral ODEs} Recall from Proposition  \ref{odes-1} (spectral ODEs) that the spectral coefficients satisfy
\begin{equation}\label{odes0_rest}
 \begin{cases}
   \dot{\alpha}_{1}=-\sqrt{8}(\alpha^2_{1}+\alpha_{3}^2)+o(|\vec{\alpha}|^2+|\tau|^{-100})\\
   \dot{\alpha}_{2}=-\sqrt{8}(\alpha^2_{2}+\alpha_{3}^2)+o(|\vec{\alpha}|^2+|\tau|^{-100})\\
    \dot{\alpha}_{3}=-\sqrt{8}(\alpha_{1}+\alpha_2)\alpha_{3}+o(|\vec{\alpha}|^2+|\tau|^{-100})\\
    \end{cases}
\end{equation}
To solve these spectral ODEs, we start with the following a priori estimate:

\begin{proposition}[a priori estimate]\label{conv_est}
The spectral coefficients satisfy
\begin{equation}
\alpha_1 \leq o(|\vec{\alpha}|+|\tau|^{-100}), \quad \alpha_2 \leq o(|\vec{\alpha}|+|\tau|^{-100}),
\end{equation}
and
\begin{equation}
\alpha_3^2 - \alpha_1\alpha_2 \leq o(|\vec{\alpha}|^2+|\tau|^{-100}).
\end{equation}
\end{proposition}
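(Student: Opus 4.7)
The proof rests on convexity of $M_t$. Since $M_t$ is ancient noncollapsed, the enclosed region $K_\tau\subset\mathbb{R}^4$ is convex (cf.\ \cite{HaslhoferKleiner_meanconvex}). In the graphical region $K_\tau$ is described by $\{(y_1,y_2,x_3,x_4):\sqrt{x_3^2+x_4^2}\leq \sqrt{2}+u(y_1,y_2,\theta)\}$ with $\theta=\mathrm{arg}(x_3+ix_4)$. For any unit vector $v\in\mathbb{R}^2$, any base point $y_0$ in the graphical region, and any angle $\theta^0$, intersecting $K_\tau$ with the 2-plane $\{(y_0+sv,\, t\cos\theta^0,\, t\sin\theta^0):s,t\in\mathbb{R}\}$ produces a convex planar region whose upper $t$-boundary is $\sqrt{2}+u(y_0+sv,\theta^0)$; concavity of this upper boundary in $s$ gives
\[
\nabla_y^2 u(y,\theta,\tau)\leq 0
\]
pointwise as a symmetric $2\times 2$ matrix throughout the graphical region.

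Next, I would evaluate this inequality at the origin. Since $\chi\equiv 1$ near the origin for $\rho\gg 1$, we have $\hat u=u$ there, and the expansion \eqref{expansion_123} yields $\nabla_y^2\hat u(0,\theta,\tau)=2Q+\nabla_y^2 w(0,\theta,\tau)$ where $Q:=\begin{pmatrix}\alpha_1&\alpha_3\\ \alpha_3&\alpha_2\end{pmatrix}$. Combined with the pointwise convexity,
\[
2Q+\nabla_y^2 w(0,\theta,\tau)\leq 0.
\]
To conclude it suffices to show the pointwise bound $|\nabla_y^2 w(0,\theta,\tau)|=o(|\vec{\alpha}|+|\tau|^{-100})$ uniformly in $\theta$. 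By Lemma \ref{rotationdecay}, $\|w\|_{\mathcal H}^2=o(|\vec{\alpha}|^2)+Ce^{-\eta\rho/3}$, so on the compact region $\{|(y_1,y_2)|\leq 10\}\times S^1$ (where the Gaussian weight is bounded below) one has $\|w\|_{L^2}=o(|\vec{\alpha}|+|\tau|^{-100})$, using $\rho=|\tau|^\gamma$. Projecting Proposition \ref{evolution equation u2} off $\mathrm{span}\{\psi_1,\psi_2,\psi_3\}$ yields an equation $\partial_\tau w=\mathcal Lw+g$ with $\|g\|_{\mathcal H}$ correspondingly small, and the graphical bound $\|u\|_{C^4(B_{2\rho})}\leq \rho^{-2}$ controls all nonlinear coefficients; interior parabolic regularity then promotes the weighted $L^2$ bound to the required pointwise $C^2$ bound at the origin.

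Given the pointwise estimate we obtain $2Q\leq o(|\vec{\alpha}|+|\tau|^{-100})\,I$ as symmetric matrices. Reading off the diagonal entries $Q_{11}=\alpha_1$ and $Q_{22}=\alpha_2$ immediately gives $\alpha_1,\alpha_2\leq o(|\vec{\alpha}|+|\tau|^{-100})$. For the third inequality, the matrix $-2Q+o(|\vec{\alpha}|+|\tau|^{-100})\,I$ is positive semidefinite and hence has non-negative determinant; expanding the $2\times 2$ identity $\det(A+B)=\det A+\det B+\mathrm{tr}(A)\mathrm{tr}(B)-\mathrm{tr}(AB)$ with $A=-2Q$ and perturbation $B$ of order $o(|\vec{\alpha}|+|\tau|^{-100})$, and using the already-proven diagonal bounds, gives $4(\alpha_1\alpha_2-\alpha_3^2)\geq -o(|\vec{\alpha}|^2+|\tau|^{-100})$, which is the claimed bound. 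The main technical obstacle is the second step, namely promoting the weighted $L^2$ bound on $w$ to a pointwise $C^2$ bound at the origin: while all the analytic ingredients are in place, one needs to verify carefully that the constants in the interior Schauder estimate remain uniformly controlled as $\tau\to-\infty$, using the uniform graphical bound and the smallness of the forcing.
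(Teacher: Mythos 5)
Your convexity input (concavity of $y\mapsto \sqrt{2}+u(y,\theta,\tau)$ from slicing the convex enclosed region, hence $\nabla_y^2 u\leq 0$ pointwise in the graphical region) and your final matrix algebra are both fine, and convexity is indeed the underlying source of these estimates. However, the middle step — promoting $\|w\|_{\mathcal{H}}^2=o(|\vec{\alpha}|^2)+Ce^{-\eta\rho/3}$ to the pointwise bound $|\nabla_y^2 w(0,\theta,\tau)|=o(|\vec{\alpha}|+|\tau|^{-100})$ — is a genuine gap, not a matter of tracking Schauder constants. First, in the equation $\partial_\tau w=\mathcal{L}w+g$ the forcing $g$ is only known to be small in the Gaussian $L^2$ norm; interior parabolic regularity with $L^2$ forcing does not give $C^2$ bounds on $w$ (in three spatial variables $W^{2,2}$ does not embed into $C^2$, and even $L^\infty$ forcing only yields $C^{1,\beta}$). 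Second, and more seriously, the pointwise size of $g$ near the origin (the quadratic terms in $\hat u$ and the projection terms $\sum_j\dot\alpha_j\psi_j$) is a priori only $O(\rho^{-4})$ from the graphical bound $\|u\|_{C^4}\leq\rho^{-2}$, and at this stage there is no lower bound on $|\vec{\alpha}|$ in terms of powers of $\rho^{-1}=|\tau|^{-\gamma}$, so $O(\rho^{-4})$ is not $o(|\vec{\alpha}|+|\tau|^{-100})$; for the same reason, interpolating between $\|w\|_{L^2(B_{10})}$ and $\|w\|_{C^4}\leq C\rho^{-2}$ loses a power that cannot be absorbed. A repair would require an additional ingredient, e.g.\ a local smoothing estimate bounding $\|u(\cdot,\tau)\|_{C^{2,\alpha}(B_{10})}$ by $\sup_{\tau'\in[\tau-1,\tau]}\|\hat u(\cdot,\tau')\|_{\mathcal{H}}$ (so that the quadratic forcing is $O(|\vec{\alpha}|^2)=o(|\vec{\alpha}|)$ in a H\"older norm), together with Proposition \ref{odes-1} to control $|\dot\alpha_j|$, followed by Moser iteration and Schauder for $w$; none of this is supplied in your write-up.

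For comparison, the paper's proof avoids pointwise Hessians entirely: it argues by contradiction along a sequence $\tau_i\to-\infty$ with $|\tau_i|^{100}|\vec{\alpha}(\tau_i)|$ bounded away from zero, invokes \cite[Proposition 5.2]{CHH_wing} for the tilted flow (which yields that subsequential $\mathcal{H}$-limits of $\hat u/\|\hat u\|_{\mathcal{H}}$ are quadratics $q_{11}\psi_1+q_{22}\psi_2+q_{12}\psi_3$ with $(q_{\alpha\beta})$ negative semidefinite — the convexity enters there only through weak, integrated information, so no pointwise $C^2$ control of the remainder is ever needed), and uses Proposition \ref{utheta} to show the fine-tuning rotation of \cite{CHH_wing} is exponentially close to the identity, so the conclusion transfers to the untilted flow and contradicts the assumed failure of the estimates. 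If you want a self-contained argument along your lines, the cleanest fix is to imitate this: pass concavity of $\sqrt{2}+u$ in $y$ to the $\mathcal{H}$-limit in the distributional sense (testing $\partial^2_{vv}$ against nonnegative test functions), rather than trying to control $\nabla_y^2 w$ at a point.
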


\begin{proof}
In essence, this will be a consequence of \cite[Proposition 5.2]{CHH_wing}. However, since the setup of the present paper is different we first have to related the tilted and untilted flow. To discuss this, recall that in the cited proposition the tilted flow $\tilde{M}_\tau = S(\tau)\bar{M}_\tau$ has been considered, where the fine-tuning rotation $S(\tau)\in \mathrm{SO}(4)$ has been constructed in \cite[Proposition 4.1]{CHH_wing} via the implicit function theorem to ensure that the profile function $\tilde{u}$ of the tilted flow satisfies the orthogonality conditions
\begin{equation}
\langle \tilde{u}\chi,\psi_j \rangle_{\mathcal{H}} =0 \qquad (j=4,\ldots,7).
\end{equation}
Observe that thanks to Proposition \ref{utheta} (almost circular symmetry) the profile function $u$ of the untilted flow already satisfies
\begin{equation}
|\langle u\chi,\psi_j \rangle_{\mathcal{H}} | \leq Ce^{-\eta \rho} \qquad (j=4,\ldots,7).
\end{equation}
Hence, inspecting the proof of \cite[Proposition 4.1]{CHH_wing} we can arrange that
\begin{equation}\label{tilt_small}
|S(\tau) - \textrm{id}| \leq Ce^{-\eta \rho}.
\end{equation}
Now, suppose towards a contradiction there is a sequence $\tau_i\to -\infty$ with 
\begin{equation}\label{seq_large}
\liminf_{i\to \infty}   |\tau_i|^{100}|\vec{\alpha}(\tau_i)| > 0,
\end{equation}
such that
\begin{equation}\label{eq_at_least_one}
\max\left\{\liminf_{i \to \infty} \frac{ \alpha_1}{|\vec{\alpha}|}({\tau_i}), \liminf_{i \to \infty} \frac{ \alpha_2}{|\vec{\alpha}|}({\tau_i}), \liminf_{i \to \infty} \frac{ \alpha_3^2- \alpha_1 \alpha_2}{|\vec{\alpha}|^2}(\tau_i)\right\}>0.
\end{equation}
By \cite[Proposition 5.2]{CHH_wing} after passing to a subsequence we have
\begin{equation}
\lim_{i'\to \infty} \frac{\hat{\tilde{u}}(\cdot,\tau_{i'})}{\|\hat{\tilde{u}}(\cdot,\tau_{i'})\|_{\mathcal{H}} } = q_{11} \psi_1(y)+q_{22}\psi_2(y)+q_{12}\psi_3(y)
\end{equation}
in $\mathcal{H}$-norm, where $\{q_{\alpha\beta}\}$ is semi-negative definite symmetric $2\times2$-matrix.
Thanks to \eqref{tilt_small} and \eqref{seq_large} this holds for the untilted flow as well, i.e.
\begin{equation}
\lim_{i'\to \infty} \frac{\hat{u}(\cdot,\tau_{i'})}{\|\hat{u}(\cdot,\tau_{i'})\|_{\mathcal{H}} } = q_{11} \psi_1(y)+q_{22}\psi_2(y)+q_{12}\psi_3(y).
\end{equation}
This contradicts \eqref{eq_at_least_one}, and thus proves the proposition.
\end{proof}

We now consider the trace and determinant,
\begin{equation}
    S:=\alpha_{1}+\alpha_{2},\quad D:=\alpha_{1} \alpha_{2}-{\alpha_{3}^{2}}  \, .
\end{equation}

\begin{proposition}[evolution of trace and determinant]\label{prop_sum_disc}
The trace and determinant satisfy
\begin{equation}\label{Riccati ODEs}
    \begin{cases}
      \dot{S}=-\sqrt{8}(S^2-2D)+o(|\tau|^{-2}),\\
       \dot{D}=-\sqrt{8}SD+o(|\tau|^{-3}).
    \end{cases}
    \end{equation}
\end{proposition}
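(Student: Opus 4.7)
The plan is to compute $\dot{S}$ and $\dot{D}$ directly from Proposition \ref{odes-1} and recognize the resulting polynomial expressions in $(\alpha_{1},\alpha_{2},\alpha_{3})$ in terms of $S$ and $D$. The main subtlety is upgrading the crude error $o(|\vec{\alpha}|^{2}+|\tau|^{-100})$ coming from the spectral ODEs to the sharper $o(|\tau|^{-2})$, respectively $o(|\tau|^{-3})$, claimed in the proposition; this will require a preliminary a priori bound $|\vec{\alpha}(\tau)|=O(|\tau|^{-1})$ that I derive within the proof using Proposition \ref{conv_est} and Riccati comparison.

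The algebraic identities are straightforward. For $\dot{S}=\dot{\alpha}_{1}+\dot{\alpha}_{2}$, the elementary identity $\alpha_{1}^{2}+\alpha_{2}^{2}+2\alpha_{3}^{2}=(\alpha_{1}+\alpha_{2})^{2}-2(\alpha_{1}\alpha_{2}-\alpha_{3}^{2})=S^{2}-2D$ immediately gives
\[
\dot{S}=-\sqrt{8}(S^{2}-2D)+o(|\vec{\alpha}|^{2}+|\tau|^{-100}).
\]
For $\dot{D}=\dot{\alpha}_{1}\alpha_{2}+\alpha_{1}\dot{\alpha}_{2}-2\alpha_{3}\dot{\alpha}_{3}$, substituting the spectral ODEs makes the cubic leading terms telescope to $-\sqrt{8}(\alpha_{1}+\alpha_{2})(\alpha_{1}\alpha_{2}-\alpha_{3}^{2})=-\sqrt{8}SD$, with residual contributions bounded by $o(|\vec{\alpha}|^{3}+|\vec{\alpha}|\,|\tau|^{-100})$.

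The key step is to establish $|\vec{\alpha}(\tau)|=O(|\tau|^{-1})$. First I would show that $|\vec{\alpha}|\leq C|S|$ for $\tau\ll 0$ by case analysis on the signs of $\alpha_{1},\alpha_{2}$ afforded by Proposition \ref{conv_est}: the case $\alpha_{1},\alpha_{2}>0$ is ruled out because then $\alpha_{1}^{2}+\alpha_{2}^{2}=o(|\vec{\alpha}|^{2})$ would force $\alpha_{3}^{2}=(1-o(1))|\vec{\alpha}|^{2}$, whereas $\alpha_{3}^{2}\leq\alpha_{1}\alpha_{2}+o(|\vec{\alpha}|^{2})=o(|\vec{\alpha}|^{2})$; in the mixed-sign case only the nonpositive $\alpha_{i}$ can dominate, giving $|S|\geq(1-o(1))|\vec{\alpha}|$; and in the case $\alpha_{1},\alpha_{2}\leq 0$, AM-GM applied to $\alpha_{3}^{2}\leq\alpha_{1}\alpha_{2}+o(|\vec{\alpha}|^{2})$ yields $|\vec{\alpha}|^{2}\leq C\,S^{2}+o(|\vec{\alpha}|^{2})$. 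Combined with $S\leq o(|\vec{\alpha}|)\leq o(|S|)$, the estimate $|\vec{\alpha}|\leq C|S|$ then forces $S<0$ for $\tau\ll 0$. Inserting the reverse Cauchy-Schwarz bound $|\vec{\alpha}|^{2}\geq S^{2}/2$ into the $\dot{S}$-formula above gives the differential inequality $\dot{S}\leq -cS^{2}+O(|\tau|^{-100})$ for some $c>0$; setting $f=-1/S>0$ yields $\dot{f}\leq -c$, which upon integration backward in time from any fixed $\tau_{0}$ gives $|S(\tau)|\leq C|\tau|^{-1}$, and hence $|\vec{\alpha}(\tau)|\leq C|\tau|^{-1}$.

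With this a priori bound in hand, the $\dot{S}$-error becomes $o(|\vec{\alpha}|^{2})=o(|\tau|^{-2})$ and the $\dot{D}$-error becomes $o(|\vec{\alpha}|^{3}+|\vec{\alpha}|\,|\tau|^{-100})=o(|\tau|^{-3})$, completing the proof. I expect the main obstacle to be the case analysis underlying $|\vec{\alpha}|\leq C|S|$ together with the resulting strict negativity of $S$: the inequalities from Proposition \ref{conv_est} give only one-sided information on $\alpha_{1},\alpha_{2}$, and one must carefully combine these with the constraint on $\alpha_{3}^{2}$ and Cauchy-Schwarz to control every component of $\vec{\alpha}$ by $|S|$ and to exclude degenerate configurations in which $S$ might vanish or change sign at arbitrarily negative times.
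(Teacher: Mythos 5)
Your algebraic manipulations are correct: the identities $\alpha_1^2+\alpha_2^2+2\alpha_3^2 = S^2 - 2D$ and the telescoping of cubics to $-\sqrt{8}SD$ are exactly right, and your case analysis for $|\vec{\alpha}|\leq C|S|$ (modulo the $o(|\tau|^{-100})$ additive term) is a valid unpacking of what the paper records more tersely as $|\vec{\alpha}|^2 \leq (1+o(1))S^2 + o(|\tau|^{-100})$. You have also correctly observed that only the \emph{upper} bound $|S|=O(|\tau|^{-1})$ is needed to obtain the stated error terms $o(|\tau|^{-2})$ and $o(|\tau|^{-3})$ — the paper proves a two-sided bound (Claim 3.7) because it also needs the lower bound later in the proof of Theorem 3.8, but that is not logically required here.

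However, there is a genuine gap in your Riccati step. You write that substituting $f=-1/S$ into $\dot{S}\leq -cS^2 + O(|\tau|^{-100})$ yields $\dot{f}\leq -c$. In fact
\[
\dot f = \frac{\dot S}{S^2} \leq -c + \frac{O(|\tau|^{-100})}{S^2}\,,
\]
and the extra term is \emph{not} bounded unless you already know $|S|\gtrsim |\tau|^{-50}$, which you do not. Nothing in your argument rules out stretches of $\tau$ on which $|S|$ is extremely small compared to $|\tau|^{-100}$, and on such stretches the transformed inequality is useless. This is exactly the subtlety the paper addresses head-on: it first proves $\limsup_{\tau\to-\infty}|\tau|^{10}\bar{S}(\tau)=\infty$ (using a geometric argument via the graphical radius and the Merle–Zaag ODEs), and then runs an open–closed argument on the sets $I=\{\bar S = |S|\}$ and $J=\{\bar S\geq |\tau|^{-10}\}$ to propagate a lower bound on $|S|$ before any integration is performed.

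The gap can be closed without the paper's full machinery — for instance by the comparison-function argument with $h=S+B/|\tau|$ for a large constant $B$: at any zero of $h$ one has $|S|=B/|\tau|\gg|\tau|^{-50}$, so the error is negligible and $\dot h<0$ there; this forces $h$ to keep one sign for all $\tau\ll0$, and if the bad sign $h<0$ persisted then $|S|>B/|\tau|$ everywhere, which makes the error harmless and lets the Riccati integration give $|S|\leq C'/|\tau|$ with $C'$ independent of $B$, a contradiction for $B$ large. But \emph{some} such argument that first secures control over the regime where $|S|$ could be tiny is indispensable, and your proposal currently asserts the differential inequality $\dot f\leq -c$ without it. So the approach is essentially the paper's (spectral ODEs $\to$ bound $|\vec\alpha|$ by $|S|$ $\to$ Riccati-type analysis of $S$), but the proposal as written skips the step that makes the Riccati argument sound.
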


\begin{proof}
Using Proposition \ref{odes-1} (spectral ODEs) we compute
\begin{align}
      \dot{S}&=-\sqrt{8}(\alpha_1^2+\alpha_2^2 + 2\alpha_3^2)+o(|\vec{\alpha}|^2+|\tau|^{-100})\nonumber\\
      &= -\sqrt{8}(S^2-2D)+o(|\vec{\alpha}|^2+|\tau|^{-100}),
\end{align}      
      and
      \begin{align}
       \dot{D}&=-\sqrt{8}\left( \alpha_1 \alpha_2^2+\alpha_1\alpha_3^2+\alpha_2  \alpha_1^2+\alpha_2\alpha_3^2 -2S\alpha_3^2  \right)+o(|\vec{\alpha}|^3+|\tau|^{-100})\nonumber\\
       &=-\sqrt{8}SD+o(|\vec{\alpha}|^3+|\tau|^{-100}).
\end{align}  
Next, observe that Proposition \ref{conv_est} (a priori estimate) implies
\begin{equation}\label{s_dom_est}
|\vec{\alpha}|^2 \leq (1+o(1)) S^2+o(|\tau|^{-100}).
\end{equation}
This yields
 \begin{equation}\label{sum_disc_int}
    \begin{cases}
      \dot{S}=-\sqrt{8}(S^2-2D)+o(S^2+|\tau|^{-100}),\\
       \dot{D}=-\sqrt{8}SD+o(|S|^3+|\tau|^{-100}).
    \end{cases}
 \end{equation}  
To proceed, we need the following claim:

\begin{claim}[trace asymptotics]\label{claim_asympt_sum} We have
\begin{equation}\label{Stau}
   \frac{1+o(1)}{\sqrt{2}\tau} \leq S\leq  \frac{1-o(1)}{2\sqrt{2}\tau}.
\end{equation}
\end{claim}
\begin{proof}[Proof of the claim]
Note that Proposition \ref{conv_est} (a priori estimate) implies
\begin{equation}\label{Dtau}
-o(S^2+|\tau|^{-100})\leq D\leq \frac{1}{4}S^2.
\end{equation}
This yields
\begin{equation}\label{the_ode_for_s}
    -2\sqrt{2}S^2-o(S^2+|\tau|^{-100}) \leq \dot{S}\leq -\sqrt{2}S^2+o(S^2+|\tau|^{-100}).
\end{equation}
To control the errors, we consider the monotone quantity
\begin{equation}\label{def_bar_s}
    \bar{S}(\tau)=\sup_{\sigma\leq\tau}|S(\sigma)|\, .
\end{equation}
We first observe that
\begin{equation}\label{tau-S}
    \limsup_{\tau\rightarrow -\infty}|\tau|^{10}\bar{S}(\tau)=\infty\, .
\end{equation}
Indeed, if this failed then using in particular \eqref{s_dom_est} we could infer that the function $\rho(\tau)=c|\tau|^2$, where $c>0$ is small, is an admissible graphical radius, c.f.  Proposition \ref{prop_admiss} (admissibility). However, by \eqref{U_PNM_system} this would imply $|\tfrac{d}{d\tau} \log U_0(\tau)| \leq C|\tau|^{-2}$, hence $|\log U_0(\tau)| \leq C$ for $\tau\ll 0$. This contradicts the fact that $U_0(\tau)$ converges to $0$ for $\tau\to -\infty$, and thus proves \eqref{tau-S}.\\
We now fix $\tau_\ast\ll 0$, and consider the sets
\begin{align}
I:=\left\{\tau \leq \tau_\ast \, :\,  \bar{S}(\tau)=|S(\tau)|\right\}, \quad J:&=\left\{\tau \leq \tau_\ast \, :\, \bar{S}(\tau)\geq |\tau|^{-10}\right\}.
\end{align}
Note that thanks to $\lim_{\tau\to-\infty}\bar{S}(\tau)= 0$ and \eqref{tau-S} the set $I\cap J$ contains a sequence of numbers going to $-\infty$. Also, clearly $I\cap J\subseteq (-\infty,\tau_\ast]$ is closed. Now, given any $\tau_0\in I\cap J$ by \eqref{the_ode_for_s} we have
\begin{equation}\label{OED_ineq}
\dot{S}(\tau)\leq -S^2(\tau)\leq -\tfrac{1}{2}|\tau|^{-100}
\end{equation}
at $\tau=\tau_0$. In particular, remembering \eqref{def_bar_s}, we see that $S(\tau_0)<0$. In fact, we can find a $\delta>0$ such that \eqref{OED_ineq} and $S(\tau)<0$ hold for $|\tau-\tau_0|<\delta$.
Moreover, if there is some $\hat{\tau} \in I$ with $\hat\tau \leq\tau_0$ and $\dot{S}(\hat{\tau})=0$ then \eqref{the_ode_for_s} yields
\begin{equation}
|S^2(\hat{\tau})|\leq 2|\hat{\tau}|^{-100}\leq 2|{\tau}_0|^{-100}< \tfrac{1}{100} |S^2(\tau_0)|.
\end{equation}
Thus, if  $ \frac{1}{10}|S(\tau_0)|\leq |S(\tau)|\leq |S(\tau_0)|$ then $\tau\in I$.
Hence, possibly after decreasing $\delta$, we get $(\tau_0-\delta,\tau_0]\subseteq I$. Then, \eqref{OED_ineq}  implies $(\tau_0-\delta,\tau_0]\subseteq J$.\\
Summarizing, we can find $S$ by solving 
\begin{equation}
    -2\sqrt{2}S^2-o(S^2) \leq \dot{S}\leq -\sqrt{2}S^2+o(S^2)
\end{equation}
for $\tau\leq\tau_\ast$, subject to $\lim_{\tau\to -\infty}S(\tau)=0$. This yields the claim.
\end{proof}

To conclude, note that by the claim we have
\begin{equation}
o(S^2+|\tau|^{-100})=o(|\tau|^{-2}), \qquad o(|S|^3+|\tau|^{-100})=o(|\tau|^{-3}).
\end{equation}
Together with \eqref{sum_disc_int} this finishes the proof of the proposition.
\end{proof}

We can now conclude the proof of the bubble-sheet quantization theorem, which we restate here in a technically sharper way:

\begin{theorem}[bubble-sheet quantization]\label{spectral theorem_restated}
For any ancient noncollapsed mean curvature flow in $\mathbb{R}^{4}$ whose tangent flow at $-\infty$ is given by \eqref{bubble-sheet_tangent_intro}, the bubble-sheet function $u$, truncated at the graphical radius $\rho(\tau)=|\tau|^\gamma$, where $\gamma$ is the exponent from Proposition \ref{radius_lower_bound} (graphical radius), satisfies
 \begin{equation}
\lim_{\tau\to -\infty} \Big\|\, |\tau| \hat{u}(y,\theta,\tau)- y^\top Qy +2\mathrm{tr}(Q)\, \Big\|_{\mathcal{H}} = 0,
\end{equation}
where $Q$ is a symmetric $2\times 2$-matrix whose eigenvalues are quantized to be either 0 or $-1/\sqrt{8}$.
In particular, for all $R<\infty$ and all $k\in\mathbb{N}$ we have
 \begin{equation}
\lim_{\tau\to -\infty} \Big\|\,
|\tau| u(y,\theta,\tau)- y^\top Qy +2\mathrm{tr}(Q)\, \Big\|_{C^{k}(B_R)} = 0.
\end{equation}
Here, for $\mathrm{rk}(Q)\neq 1$ the matrix $Q$ is independent of time, while in the case $\mathrm{rk}(Q)=1$ we have
\begin{equation}
Q=R(\tau)^\top \begin{pmatrix}
0 & 0\\
0 & -1/\sqrt{8}
\end{pmatrix}R(\tau)
\end{equation}
for some rotation matrix $R(\tau)\in \mathrm{SO}(2)$ with $|\dot{R}(\tau)|=o(|\tau|^{-1})$.
 \end{theorem}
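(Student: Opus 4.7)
The plan is to analyze the $(S,D)$-system from Proposition \ref{prop_sum_disc} to establish the dichotomy \eqref{case1_intro}--\eqref{case2_intro}, then extract the matrix $Q$ by linear algebra, and finally upgrade the $\mathcal{H}$-convergence to $C^k$-convergence on compact sets. I work under the neutral-dominance alternative $U_-+U_+=o(U_0)$, since otherwise the conclusion already holds with $Q=0$ as explained at the start of the section.

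First I would rescale time by $\sigma := \log(-\tau)$ and set $\tilde S(\sigma) := \tau S(\tau)$, $\tilde D(\sigma) := \tau^2 D(\tau)$. Proposition \ref{prop_sum_disc} then transforms into the asymptotically autonomous planar system
\begin{align*}
\frac{d\tilde S}{d\sigma} &= \tilde S - \sqrt{8}\,(\tilde S^2 - 2\tilde D) + o(1),\\
\frac{d\tilde D}{d\sigma} &= \tilde D\,(2-\sqrt{8}\,\tilde S) + o(1),
\end{align*}
confined by Claim \ref{claim_asympt_sum} and Proposition \ref{conv_est} to the compact region $\tilde S\in[\tfrac{1}{\sqrt 8},\tfrac{1}{\sqrt 2}]+o(1)$, $\tilde D\in[-o(1),\tilde S^2/4]$. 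In this region the unperturbed system has exactly three equilibria: the excluded origin, a saddle $(1/\sqrt 8,0)$ with stable manifold tangent to $\partial_{\tilde S}$, and a linear sink $(1/\sqrt 2,1/8)$.

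To bypass a full $2$-dim phase-plane analysis I would reduce to the scalar ratio $R := D/S^2$. A direct computation using Propositions \ref{odes-1} and \ref{prop_sum_disc} produces the logistic equation
\[
\frac{dR}{d\sigma} = \sqrt{8}\,\tilde S\,R(1-4R) + o(1),\qquad R\in[-o(1),\tfrac{1}{4}+o(1)],
\]
with coefficient $\sqrt{8}\,\tilde S\ge 1-o(1)$. I expect this to be the main obstacle of the whole proof: one must promote the qualitative unperturbed dynamics (attractor $R=1/4$, repellor $R=0$) to the rigidity $\lim_{\sigma\to\infty}R(\sigma)\in\{0,1/4\}$ in the presence of a genuine $o(1)$ error. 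The argument is a barrier/asymptotically-autonomous one: if $\limsup R<1/4$, then on any set $\{R\ge\epsilon\}$ the drift $\sqrt 8\,\tilde S\,R(1-4R)$ is bounded below by a fixed positive constant while the $o(1)$ error eventually becomes negligible, forcing $R(\sigma)\to 0$; if instead $\limsup R=1/4$, one checks that the sink $R=1/4$ attracts the trajectory. Substituting the dichotomy back into the ODE for $S$ and integrating $\dot S = -\sqrt{8}(1-2R)S^2(1+o(1))$ yields $S=\tfrac{1}{\sqrt 2\,\tau}+o(|\tau|^{-1})$, $D=\tfrac{1}{8\tau^2}+o(\tau^{-2})$ in the first branch and $S=\tfrac{1}{\sqrt 8\,\tau}+o(|\tau|^{-1})$, $D=o(\tau^{-2})$ in the second.

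Finally I would translate these $(S,D)$-asymptotics into the matrix form of $Q$. Packaging Proposition \ref{odes-1} as the clean matrix ODE
\[
\dot A = -\sqrt 8\,A^2 + o(|A|^2),\qquad A(\tau) := \begin{pmatrix}\alpha_1 & \alpha_3\\ \alpha_3 & \alpha_2\end{pmatrix},
\]
we note that $A$ has trace $S$ and determinant $D$. In the non-degenerate branch both eigenvalues of $|\tau|A$ converge to $-1/\sqrt 8$, so the symmetric matrix $|\tau|A$ with coinciding limit eigenvalues must converge to $-\tfrac{1}{\sqrt 8}I$, giving $Q=-\tfrac{1}{\sqrt 8}I$. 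In the half-degenerate branch the eigenvalues of $|\tau|A$ tend to $\{0,-1/\sqrt 8\}$; diagonalizing $A(\tau)=R(\tau)^\top \mathrm{diag}(\mu_1,\mu_2) R(\tau)$ produces the stated form of $Q$, and the rate $|\dot R(\tau)|=o(|\tau|^{-1})$ follows because in the eigenbasis the matrix ODE reads $R\dot AR^\top = -\sqrt 8\,\mathrm{diag}(\mu_1^2,\mu_2^2) + o(|\tau|^{-2})$; the diagonal principal part commutes with $\mathrm{diag}(\mu_1,\mu_2)$, so the off-diagonal entries of $R\dot AR^\top$ are $o(|\tau|^{-2})$, and the standard eigenvector perturbation formula together with the spectral gap $|\mu_1-\mu_2|\asymp|\tau|^{-1}$ delivers $|\dot R|=o(|\tau|^{-1})$. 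Combining the asymptotics for $\vec\alpha$ with Lemma \ref{rotationdecay}, which gives $\|w\|_\mathcal H=o(|\vec\alpha|)=o(|\tau|^{-1})$, yields the $\mathcal H$-convergence of $|\tau|\hat u$ to $y^\top Qy - 2\mathrm{tr}(Q)$. The $C^k(B_R)$ statement then follows by combining this with the a priori $C^4$-smallness \eqref{small_graph} and standard parabolic interior estimates for the evolution equation for $u$, noting that $\hat u\equiv u$ on $B_R$ once $\rho(\tau)/2\ge R$.
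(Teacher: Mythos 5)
Your proposal is essentially correct and follows the same overall strategy as the paper: reduce to the neutral-dominant case, derive and analyze the $(S,D)$ Riccati system, and translate the asymptotics into the matrix $Q$. The one genuinely different step is how the dichotomy \eqref{case1_intro}--\eqref{case2_intro} is extracted from the perturbed Riccati system. The paper works with the rescaled two-dimensional system in $(x,y)=(-\tau a,\tau^2 b)$, identifies the saddle and the source/node, and invokes a Lyapunov-function/phase-plane argument that any trajectory staying away from the saddle must have come from the node. You instead pass to the scalar ratio $R=D/S^2$, which satisfies the nearly autonomous logistic equation $R'=\sqrt 8\,\tilde S\,R(1-4R)+o(1)$, and you run a one-dimensional barrier argument to force $R\to 0$ or $R\to 1/4$. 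The computation producing the logistic equation is correct, and the reduction to a scalar ODE is a clean simplification: it sidesteps any explicit Lyapunov function and isolates exactly the degenerate (zero) versus non-degenerate (positive) rank content in a single quantity. What it gives up is that the drift vanishes at the repellor $R=0$, so the dichotomy is not a straight barrier statement but a contradiction argument; you would need to spell out that if $\limsup R\in(0,1/4)$ then the positive drift on $[\limsup R-\epsilon,\limsup R+\epsilon]$ forces $R$ to exceed $\limsup R+\epsilon$, and that once $\limsup R=1/4$ a second barrier at $(1/4+\ell)/2$ rules out $\liminf R=\ell<1/4$. Your phrasing (``forcing $R(\sigma)\to 0$'') reads as if the positive drift pushes $R$ down, which it does not; the logic is the contrapositive. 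The remaining steps — the matrix reformulation $\dot A=-\sqrt 8\,A^2+o(|A|^2)$, the eigenvalue analysis, the spectral-gap estimate $|\dot R(\tau)|=o(|\tau|^{-1})$ from the off-diagonal block, the use of Lemma \ref{rotationdecay} to pass from $\vec\alpha$ to $\hat u$, and the interpolation to $C^k(B_R)$ — all match the paper in substance. One small slip: you switch the branch labels between stating the dichotomy (first branch $R\to 0$) and reporting the asymptotics (first branch listed is $D=1/(8\tau^2)$, which is $R\to 1/4$); this does not affect the argument.
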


\begin{proof}By the reduction at the beginning of this section, assuming that the neutral mode is dominant we have to show that the conclusion holds for for some $Q$ with $\mathrm{rk}(Q)\geq 1$. To get rid of some annoying prefactors and minus signs, we set
\begin{equation}
a:=-\sqrt{2}S,\qquad b:= 8D\, .
\end{equation}
Then, our ODEs from Proposition \ref{prop_sum_disc} (evolution of trace and determinant) take the form
\begin{equation}\label{Riccati ODEs_ab}
    \begin{cases}
      \dot{a}=2a^2-b+o(|\tau|^{-2}),\\
       \dot{b}=2ab+o(|\tau|^{-3}),
    \end{cases}
    \end{equation}
and our a priori estimates from \eqref{Stau} and \eqref{Dtau} take the form
\begin{equation}
\frac{1-o(1)}{2|\tau|} \leq a \leq \frac{1+o(1)}{|\tau|}\, , \qquad \frac{-o(1)}{\tau^2} \leq b \leq \frac{1+o(1)}{\tau^2}\, .
\end{equation}   
In fact, it is useful to make yet another substitution to get rid of the $\tau$-dependence. Specifically, we set
\begin{equation}
x:=-\tau a\, , \qquad y:=\tau^2 b\ , \qquad \sigma:=-\log(-\tau)\, .
\end{equation}
In these new variables our ODEs take the form
\begin{equation}\label{Riccati ODEs_xy}
    \begin{cases}
      x'=2x^2-x-y+o(1),\\
      y'=2xy-2y+o(1),
    \end{cases}
    \end{equation}
and our a priori estimates take the form
\begin{equation}\label{apriori_xy}
\frac{1}{2}-o(1) \leq x \leq 1+o(1)\, , \qquad -o(1)\leq y \leq 1+o(1) \, .
\end{equation}
Observe that the vector field
\begin{equation}
V(x,y)=(2x^2-x-y,2xy-2y)
\end{equation}
in the region relevant by \eqref{apriori_xy} has exactly the two zeros
\begin{equation}
(x,y)=(1/2,0),\qquad (x,y)=(1,1)\, ,
\end{equation}
and that there is an integral curve from $(1,1)$ to $(1/2,0)$, but no integral curve in the other direction.
It follows that for $\sigma\to -\infty$ we either have
\begin{equation}\label{first_option}
x = \frac{1}{2} + o(1)\, , \quad y =o(1)\, ,
\end{equation}
or
\begin{equation}\label{second_option}
x = 1 + o(1)\, , \quad y =1+o(1)\, .
\end{equation}
Indeed, either \eqref{first_option} holds and we are done, or there exists some $\eps>0$ and arbitrarily negative times $\sigma$ such that $|(x(\sigma),y(\sigma))-(1/2,0)|\geq \eps$. But then integrating the ODEs \eqref{Riccati ODEs_xy} backwards in time and using a Lyapunov function argument, we would either leave the relevant rectangular region and thus obtain a contradiction with the a priori estimates \eqref{apriori_xy}, or converge to $(1,1)$ and thus conclude that \eqref{second_option} holds.
Consequently, the trace and determinant for $\tau\to -\infty$ satisfy either 
\begin{equation}
    \begin{cases}
S=\frac{1}{\sqrt{8}\tau}+o(|\tau|^{-1})\\  
D=o(|\tau|^{-2}),
\end{cases}
\end{equation}
or
    \begin{equation}
    \begin{cases}
S=\frac{1}{\sqrt{2}\tau}+o(|\tau|^{-1})\\  
D=\frac{1}{8\tau^2}+o(|\tau|^{-2}).
\end{cases}
\end{equation}
In the first case, remembering also Proposition \ref{conv_est} (a priori estimates), we infer that the eigenvalues of the matrix
\begin{equation}
\begin{pmatrix}
\alpha_1 & \alpha_3\\
\alpha_3 & \alpha_2
\end{pmatrix}
\end{equation}
are
\begin{equation}
\frac{1}{\sqrt{8}\tau}+ o(|\tau|^{-1}) \quad \textrm{and} \quad o(|\tau|^{-1})\, .
\end{equation}
Hence, the conclusion
 \begin{equation}
\lim_{\tau\to -\infty} \Big\|\, |\tau| \hat{u}(y,\theta,\tau)- y^TQy +2\mathrm{tr}(Q)\, \Big\|_{\mathcal{H}} = 0
\end{equation}
holds with
\begin{equation}
Q=R(\tau)^\top \begin{pmatrix}
0 & 0\\
0 & -1/\sqrt{8}
\end{pmatrix}R(\tau)\, ,
\end{equation}
where
\begin{equation}
R(\tau)=\begin{pmatrix}
\cos \phi(\tau) & -\sin \phi(\tau)\\
\sin \phi(\tau) & \cos \phi(\tau)
\end{pmatrix}\, .
\end{equation}
Furthermore, considering the original spectral ODEs,
\begin{equation}\label{odes2}
 \begin{cases}
    \dot{\alpha}_{1}=-\sqrt{8}(\alpha^2_1+\alpha_{3}^2)+o(|\tau|^{-2})\\
    \dot{\alpha}_{2}=-\sqrt{8}(\alpha^2_2+\alpha_{3}^2)+o(|\tau|^{-2})\\
    \dot{\alpha}_{3}=-\sqrt{8}(\alpha_1+\alpha_2)\alpha_3+o(|\tau|^{-2}),\\
    \end{cases}
\end{equation}
we infer that
\begin{equation}
|\dot{R}(\tau)|= o(|\tau|^{-1})\, .
\end{equation}
Finally, in the second case the eigenvalues are
\begin{equation}
\frac{1}{\sqrt{8}\tau}+ o(|\tau|^{-1}) \quad \textrm{and} \quad \frac{1}{\sqrt{8}\tau}+ o(|\tau|^{-1})\, .
\end{equation}
Hence, the conclusion holds with
\begin{equation}
Q= \begin{pmatrix}
-1/\sqrt{8} & 0\\
0 & -1/\sqrt{8}
\end{pmatrix}\, .
\end{equation}
This finishes the proof of the theorem.
\end{proof}

\bigskip

\section{The fully-degenerate case}\label{sec_full_deg}

In this section, we prove Theorem \ref{Rk=0} (fully-degenerate case), which we restate here for convenience of the reader.

\begin{theorem}[fully-degenerate case]\label{Rk=0_rest}
Let $M_t$ be an ancient noncollapsed mean curvature flow in $\mathbb{R}^{4}$ whose tangent flow at $-\infty$ is given by \eqref{bubble-sheet_tangent_intro}. If $\mathrm{rk}(Q)=0$, then $M_t$ is either a round shrinking $\mathbb{R}^2\times S^1$ or $\mathbb{R}\times$2d-bowl. 
\end{theorem}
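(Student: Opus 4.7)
The plan is to reduce to the noncompact case, which is already classified in \cite{CHH_wing}, and then rule out compact examples via a tip blowup argument.

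\textbf{Step 1 (Reduction to the unstable regime).} If $M_t$ is the round shrinking cylinder we are done, so assume otherwise. I first show that under $\mathrm{rk}(Q)=0$ the unstable mode must be dominant in the Merle-Zaag alternative (Proposition \ref{mz.ode.fine.bubble-sheet}). If instead the neutral mode were dominant, Theorem \ref{spectral theorem_restated} would apply: either the projection of $\hat u$ onto $\mathcal H_0$ is nontrivial and the theorem forces $\mathrm{rk}(Q)\geq 1$, contradicting the hypothesis, or the projection vanishes identically, which combined with $U_\pm = o(U_0)$ gives $\hat u \equiv 0$, so $\bar M_\tau \equiv \Gamma$ for $\tau\ll 0$ and $M_t$ is the round cylinder by backward uniqueness, contradicting our assumption.

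\textbf{Step 2 (Noncompact case).} If $M_t$ is noncompact, then since the unstable mode dominates, the main result of \cite{CHH_wing} directly identifies $M_t$ as $\mathbb{R}\times$2d-bowl. It thus remains only to rule out compact examples.

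\textbf{Step 3 (No compact example).} Suppose toward a contradiction that $M_t$ is compact. By the fine bubble-sheet theorem of \cite{CHH_wing}, there is a nonzero vector $(a_1, a_2) \in \mathbb{R}^2$ such that for every space-time point $X$, after a suitable $X$-dependent recentering in the $x_3x_4$-plane, the profile $u^X$ of the renormalized flow centered at $X$ satisfies
\begin{equation}
u^X(y_1,y_2,\theta,\tau) = e^{\tau/2}(a_1 y_1 + a_2 y_2) + o(e^{\tau/2})
\end{equation}
for $\tau\ll 0$ depending only on the bubble-sheet scale of $X$, and this vector is independent of $X$. After a rotation in the $x_1x_2$-plane, I may assume $(a_1,a_2)=(a,0)$ with $a>0$. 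For each sufficiently negative $s$, choose $p_s^\pm \in M_s$ realizing the max and min of $x_1$ on $M_s$ (which exist by compactness), and blow up the flow at the space-time points $X_s^\pm = (p_s^\pm, s)$ on the bubble-sheet scale, extracting subsequential limits $M_\infty^\pm$. These limits are ancient noncollapsed flows with bubble-sheet tangent at $-\infty$, and by invariance of $(a_1, a_2)$ under change of center they both inherit the same fine bubble-sheet vector $(a,0)$. On the other hand, since $p_s^+$ maximizes $x_1$ on $M_s$, the flow near $p_s^+$ lies on the $\{x_1 \leq 0\}$-side of its limiting cylinder, forcing the leading linear coefficient in $y_1$ of $u^{X_s^+}$ to be nonpositive; an analogous argument at $p_s^-$ gives a nonnegative coefficient. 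Passing to the blowup limits, the $y_1$-components of the fine bubble-sheet vectors of $M_\infty^+$ and $M_\infty^-$ must therefore have opposite signs, contradicting the conclusion that both equal $a>0$.

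The main obstacle is Step 3: extracting, from the $x_1$-extremizing property of $p_s^\pm$, a definite and opposite sign for the $y_1$-coefficient of the blowup's fine bubble-sheet expansion, \emph{after} the $x_3x_4$-plane recentering built into the fine bubble-sheet theorem. Geometrically the sign is natural, since the tip of a mean-convex hypersurface lies on one side of any cylinder tangent to it at $-\infty$, but the rigorous execution requires controlling how the $x_3 x_4$-recentering interacts with the max/min condition in $x_1$, and ensuring that the bubble-sheet scale at $p_s^\pm$ is large enough for the fine bubble-sheet expansion to be nontrivial and to persist in the blowup limit.
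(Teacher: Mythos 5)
Your overall structure matches the paper's argument exactly: reduce to the unstable regime, dispatch the noncompact case via \cite{CHH_wing}, and rule out compact examples by a tip blowup contradiction. Steps 1 and 2 are fine. The substantive issue is the gap in Step 3, which you flag honestly but which is genuinely the heart of the matter.

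Two remarks on Step 3. First, a small but telling slip: you say you need the bubble-sheet scale at $p_s^\pm$ to be \emph{large enough} for the fine bubble-sheet expansion to persist, but the condition you actually need is the opposite — a uniform \emph{upper} bound $\sup_i Z(p_i^\pm, t_i)<\infty$. The threshold $\tau_\ast(Z(X))$ in the fine bubble-sheet expansion degrades as $Z$ grows, and more importantly, if $Z(X_i^\pm)\to\infty$ then blowing up on the bubble-sheet scale multiplies the fine bubble-sheet vector by $Z(X_i^\pm)^{-1}\to 0$, killing it in the limit. The paper's Claim 4.2 proves exactly this upper bound, and the proof is not routine: one supposes $Z\to\infty$, rescales, takes a limit $\mathcal{M}^\infty$, applies the quantization theorem to get a matrix $Q^\infty$, rules out $\mathrm{rk}(Q^\infty)\geq 1$ (this would force neutral-mode dominance for $\mathcal{M}^i$ at large $i$, contradicting the standing hypothesis) and rules out the round cylinder (extinction time / halfspace argument), concluding $\mathcal{M}^\infty$ has dominant unstable mode and hence a \emph{nonvanishing} fine bubble-sheet vector — contradicting the vanishing forced by $Z^{-1}\to 0$. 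Without this claim, your blowup limits $M^\pm_\infty$ need not retain the nontrivial fine bubble-sheet structure you want to compare.

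Second, the paper sidesteps the delicate sign argument you describe (reading off a definite sign for the $y_1$-coefficient of $u^{X_s^\pm}$ through the $x_3x_4$-recentering). Once the scale is bounded, the paper takes the limits $\mathcal{M}^\pm$ by translating without rescaling, observes that they are noncompact with bubble-sheet tangent and dominant unstable mode, and hence by \cite[Theorem 1.10]{CHH_wing} are $\mathbb{R}\times$2d-bowls. By the $x_1$-extremizing choice of $p_i^\pm$, these bowls translate in opposite $x_1$-directions, and since the fine bubble-sheet vector of $\mathbb{R}\times$2d-bowl is determined by (and aligned with) its translation direction, the vectors of $\mathcal{M}^+$ and $\mathcal{M}^-$ point opposite ways — contradicting their being equal to $(a_1,a_2)$. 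This replaces your heuristic ``lies on one side of its limiting cylinder'' argument with a clean rigidity statement. If you want to complete the proof, you should prove the uniform upper bound on the bubble-sheet scale at the tips, and invoke the bowl classification to get the direction contradiction, rather than attempting to read the sign directly from the graphical expansion.
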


\begin{proof}
We assume throughout the proof that the flow is not a round shrinking $\mathbb{R}^2\times S^1$. As we have seen in the previous section our assumption $\mathrm{rk}(Q)=0$ is then equivalent to the assumption that in Proposition \ref{mz.ode.fine.bubble-sheet} (Merle-Zaag alternative) the unstable mode is dominant. If the flow is noncompact, then by \cite[Theorem 1.10]{CHH_wing} it must be $\mathbb{R}\times$2d-bowl. Hence, our task is to rule out the compact case.\\
So suppose towards a contraction $\mathcal{M}=\{M_t\}$ is a compact ancient noncollapsed mean curvature flow in $\mathbb{R}^{4}$ whose tangent flow at $-\infty$ is given by \eqref{bubble-sheet_tangent_intro} and for which in the Merle-Zaag alternative the unstable mode is dominant. Then by \cite[Theorem 6.7 and Theorem 6.9]{CHH_wing} there is a universal nonvanishing fine bubble-sheet vector $(a_{1}, a_{2})$ associated to our flow, such that for any space-time point $X$ after suitable recentering in the $x_3x_4$-plane the profile function $u^X$ of the renormalized flow $\bar{M}_\tau^X$ centered at $X$ satsifies
\begin{equation}
u^X = e^{\tau/2}(a_1y_1+a_2y_2) + o(e^{\tau/2})
\end{equation}
for all $\tau\leq \tau_\ast(Z(X))$, depending only on an upper bound for the bubble-sheet scale $Z(X)$ as defined in \cite[Definition 2.8]{CHH_wing}.\\
Now, considering any sequence $t_i\to -\infty$, let $p_{t_i}^\pm\in M_{t_i}$ be points such that
\begin{equation}\label{choice_min_max}
x_1(p_{t_i}^-)=\min_{p\in M_{t_i}} x_1(p)\, ,\qquad x_1(p_{t_i}^+)=\max_{p\in M_{t_i}} x_1(p)\, .
\end{equation}

\begin{claim}[bubble-sheet scale]\label{cyl_scale_bd}
We have
\begin{equation}
\sup_i Z(p^{\pm}_i,t_i)< \infty.
\end{equation}
\end{claim}

\begin{proof}[Proof of the claim]
We will argue similarly as in the proofs of \cite[Proposition 5.8]{CHH}, \cite[Proposition 6.2]{CHHW}, and \cite[Claim 7.2]{CHH_wing}.\\
Write $X_i^\pm=(p^{\pm}_i,t_i)$, and suppose towards a contradiction that $Z(X_i^\pm)\to \infty$. Let $\mathcal{M}^i$ be the sequence of flows obtained by shifting $X_i^\pm$ to the origin, and parabolically rescaling by $Z(X^{\pm}_i)^{-1}$. By \cite[Theorem 1.14]{HaslhoferKleiner_meanconvex} we can pass to a subsequential limit $\mathcal{M}^\infty$, which is an ancient noncollapsed flow that is weakly convex and smooth until it becomes extinct. Note also that by construction $\mathcal{M}^\infty$ has bubble-sheet tangent flow at $-\infty$.\\
Now, by Theorem \ref{spectral theorem} (bubble-sheet quantization) associated $\mathcal{M}^\infty$ there is a fine-bubble sheet matrix $Q^\infty$. If $\textrm{rk}(Q^\infty)\geq 1$, then
 for large $i$ this contradicts the fact that $\mathcal{M}^i$ has dominant unstable mode. Thus, $Q^\infty=0$.\\
Moreover, if $\mathcal{M}^\infty$ was a round shrinking $\mathbb{R}^2\times S^1$, then if it became extinct at time $0$ that would contradict the definition of the bubble-sheet scale, and if it became extinct at some later time that would contradict the fact that $M^\infty_0\cap (\mathbb{R}^2\times \{0\})$ is contained in a halfspace by construction.\\
By the above, the flow $\mathcal{M}^\infty$ has dominant unstable mode. Thus, by \cite[Theorem 6.7 and Theorem 6.9]{CHH_wing} associated to $\mathcal{M}^\infty$ there is a nonvanishing fine bubble-sheet vector $(a_{1}^\infty, a_{2}^\infty)$. This contradicts the fact that the fine-bubble sheet vector of $\mathcal{M}^i$ is obtained from the fine bubble-sheet vector $(a_1,a_2)$ of $\mathcal{M}$ by multiplying by  $Z(X^\pm_i)^{-1}\to 0$, and thus proves the claim.
\end{proof}

Continuing the proof of the theorem, consider the sequence
\begin{equation}
    \mathcal{M}^{\pm,i}=\mathcal{M}-(p^{\pm}_i,t_i) \, ,
\end{equation}
which is obtained by shifting in space-time without rescaling. By \cite[Theorem 1.14]{HaslhoferKleiner_meanconvex} we can pass to subsequential limits $\mathcal{M}^\pm$, which are ancient noncollapsed flows that are weakly convex and smooth until they becomes extinct. By Claim \ref{cyl_scale_bd} (bubble-sheet scale) and since $|x_1(p_{t_i}^\pm)| \to \infty$, we infer that $\mathcal{M}^\pm$ is noncompact and has a bubble-sheet tangent at $-\infty$. Moreover, the argument from the proof of Claim \ref{cyl_scale_bd} also yields that $\mathcal{M}^\pm$ has dominant unstable mode, with the same fine bubble-sheet vector $(a_1,a_2)$ as $\mathcal{M}$.
Now, by \cite[Theorem 1.10]{CHH_wing} the flows $\mathcal{M}^\pm$ must be $\mathbb{R}\times$2d-bowl. However, observe that by our choice of points in \eqref{choice_min_max}, the flow $\mathcal{M}^-$ translates in positive $x_1$-direction, while the flow $\mathcal{M}^+$ translates in negative $x_1$-direction. In particular, the fine bubble-sheet vector of $\mathcal{M}^-$ points in positive $x_1$-direction, while the fine bubble-sheet vector of $\mathcal{M}^+$ points in negative $x_1$-direction. This contradicts the fact that $\mathcal{M}^-$ and $\mathcal{M}^+$ have the same fine-bubble sheet vector, and thus concludes the proof of the theorem.
\end{proof}

\bigskip

\section{The half-degenerate case}\label{sec_half_deg}

In this short section, we prove Theorem \ref{thm_halfdeg} (half-degenerate case), which we restate here for convenience of the reader.

\begin{theorem}[half-degenerate case]
Let $M_t$ be an ancient noncollapsed mean curvature flow in $\mathbb{R}^{4}$ whose tangent flow at $-\infty$ is given by \eqref{bubble-sheet_tangent_intro}.
If $\mathrm{rk}(Q)=1$, and  if $M_t$ is either splits off a line or is selfsimilarly translating, then $M_t$ is either $\mathbb{R}\times$2d-oval or belongs to the one-parameter family of 3d oval-bowls constructed by Hoffman-Ilmanen-Martin-White, respectively.
\end{theorem}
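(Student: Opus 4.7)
My plan is to treat the two additional hypotheses separately and, in each case, invoke an existing classification theorem and then use Theorem \ref{Rk=0} (fully-degenerate case) to discard the alternatives that are incompatible with $\mathrm{rk}(Q)=1$.

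\emph{Case 1: $M_t$ splits off a line.} Write $M_t=\mathbb{R}\times N_t$, where $N_t$ is an ancient noncollapsed mean curvature flow in $\mathbb{R}^3$. The bubble-sheet tangent flow hypothesis \eqref{bubble-sheet_tangent_intro} for $M_t$ forces the tangent flow at $-\infty$ of $N_t$ to be a round shrinking neck $\mathbb{R}\times S^1(\sqrt{2|t|})$. By the classification of Brendle-Choi \cite{BC1,BC2} and Angenent-Daskalopoulos-Sesum \cite{ADS1,ADS2}, $N_t$ is either a round shrinking cylinder, the 2d-bowl translator, or a 2d-oval. In the first two sub-cases $M_t$ is, respectively, a round shrinking $\mathbb{R}^2\times S^1$ or $\mathbb{R}\times$2d-bowl; both fall under the fully-degenerate case $\mathrm{rk}(Q)=0$ by Theorem \ref{Rk=0}, contradicting $\mathrm{rk}(Q)=1$. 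Hence $N_t$ must be a 2d-oval, and we conclude $M_t=\mathbb{R}\times$2d-oval.

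\emph{Case 2: $M_t$ is selfsimilarly translating.} By the classification of selfsimilarly translating ancient noncollapsed flows in $\mathbb{R}^4$ established in \cite{CHH_translator}, $M_t$ is either $\mathbb{R}\times$2d-bowl, a 3d round bowl, or belongs to the one-parameter $\mathbb{Z}_2\times\mathrm{O}(2)$-symmetric Hoffman-Ilmanen-Martin-White family. The 3d round bowl is asymptotic to a round cylinder $\mathbb{R}\times S^2$, so its tangent flow at $-\infty$ is of type $j=1$ rather than the bubble-sheet $j=2$ required by \eqref{bubble-sheet_tangent_intro}, so it is incompatible with our setting. The $\mathbb{R}\times$2d-bowl is excluded by Theorem \ref{Rk=0}, which says it must have $\mathrm{rk}(Q)=0$. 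Therefore only the HIMW family is consistent with our assumption $\mathrm{rk}(Q)=1$, and $M_t$ must belong to it.

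Since the proof reduces in each case to invoking a previous classification and then eliminating degenerate models via Theorem \ref{Rk=0}, there is no substantial analytic obstacle. The only subtlety — and perhaps the only point warranting care — is verifying that each excluded candidate either has an incompatible tangent flow at $-\infty$ or falls into the fully-degenerate class; for this, the tangent flow computation (3d round bowl) is immediate, and the assignment $\mathbb{R}\times$2d-bowl $\leftrightarrow\mathrm{rk}(Q)=0$ is precisely the content of Theorem \ref{Rk=0}. The genuinely hard inputs — the classification of ancient noncollapsed flows in $\mathbb{R}^3$ and the classification of translating solitons in $\mathbb{R}^4$ — are imported as black boxes from \cite{BC1,BC2,ADS1,ADS2,CHH_wing,CHH_translator}.
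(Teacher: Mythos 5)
There is a genuine logical gap in how you eliminate the degenerate candidates. Theorem \ref{Rk=0} (fully-degenerate case) asserts the implication $\mathrm{rk}(Q)=0 \Rightarrow M_t\in\{\mathbb{R}^2\times S^1,\ \mathbb{R}\times\mathrm{Bowl}_2\}$. You use it in the \emph{converse} direction: you write that the round shrinking cylinder and $\mathbb{R}\times$2d-bowl ``fall under the fully-degenerate case $\mathrm{rk}(Q)=0$ by Theorem \ref{Rk=0}'' and that ``$\mathbb{R}\times$2d-bowl is excluded by Theorem \ref{Rk=0}, which says it must have $\mathrm{rk}(Q)=0$.'' Theorem \ref{Rk=0} says no such thing; it does not, by itself, assign a rank to any particular model solution. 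To complete your argument you must separately establish that these two models have $\mathrm{rk}(Q)=0$, and that is what the paper actually does: for the round shrinking $\mathbb{R}^2\times S^1$ one has $u\equiv 0$ and hence trivially $Q=0$; for $\mathbb{R}\times S^1$ times $\mathbb{R}$ and for $\mathbb{R}\times$2d-bowl, the renormalized flow converges \emph{exponentially} fast to the bubble-sheet (the 2d-bowl and the round cylinder approach the neck at an exponential rate), so the unstable mode dominates and the quantization theorem gives $Q=0$, contradicting $\mathrm{rk}(Q)=1$. This direct exponential-decay argument is the correct replacement for your backwards invocation of Theorem \ref{Rk=0}.

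Apart from this, your overall skeleton — reduce to the 3d classification of Brendle--Choi/Angenent--Daskalopoulos--Sesum in the split case, and to the translator classification of \cite{CHH_translator} in the translating case, then exclude the candidates with the wrong tangent cone or the wrong rank — matches the paper's proof. The paper additionally records, using \cite[Theorem 1.6]{ADS1} and \cite[Theorem 3.6]{CHH_translator}, that the surviving candidates (the 2d-oval cross $\mathbb{R}$ and the HIMW oval-bowls) do in fact realize the expansion of Theorem \ref{spectral theorem} with a rank-one $Q$. This consistency check is not strictly needed for the implication you are proving, but it is worth knowing it is there.
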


\begin{proof}
Consider first the case that our flow splits off a line, i.e. that $M_t=\mathbb{R}\times N_t$ is the product of a line and a two-dimensional ancient noncollapsed flow $N_t$. Then, by the work of Brendle-Choi \cite{BC1} and Angenent-Daskalopoulos-Sesum \cite{ADS1,ADS2} the flow $N_t$ must be either flat $\mathbb{R}^2$, a round shrinking $S^2$, a round shrinking $ \mathbb{R}\times S^1$, a 2d-bowl, or an ancient 2d-oval. However, $\mathbb{R}^2$ and $S^2$ are excluded by the assumption that $M_t$ has a bubble-sheet tangent at $-\infty$. Moreover, if $N_t$ was $\mathbb{R}\times S^1$ or a 2d-bowl, then the renormalized flow $\bar{M}_\tau$ would converge exponentially fast to the bubble-sheet, contradicting the assumption that $\mathrm{rk}(Q)=1$. Hence, the only possibility is that $N_t$ is a 2d-oval. Then, by \cite[Theorem 1.6]{ADS1} the profile function $v(y,\tau)$ of the renormalized flow $\bar{N}_\tau$ satisfies
\begin{equation}
v(y,\tau)=-\frac{y^2-2}{\sqrt{8}\tau}+o(|\tau|^{-1})\, .
\end{equation}
Consequently, the fine bubble-sheet function $u(y_1,y_2,\tau)$ of $\bar{M}_\tau=\mathbb{R}\times \bar{N}_\tau$ satisfies
\begin{equation}\label{expansion_inward_bend}
u(y_1,y_2,\tau)=-\frac{y_2^2-2}{\sqrt{8}\tau}+o(|\tau|^{-1})\, ,
\end{equation}
and hence the expansion from Theorem \ref{spectral theorem} (bubble-sheet quantization) indeed holds with
\begin{equation}\label{eq_mat_q_half}
Q=
\begin{pmatrix}
0 & 0\\
0 & -1/\sqrt{8}
\end{pmatrix}\, .
\end{equation}
Consider now the case that our flow is selfsimilarly translating. Then, by the recent classification by Choi, Hershkovits and the second author \cite{CHH_translator} it is either $\mathbb{R}\times$2d-bowl, or a 3d round bowl, or belongs to the one-parameter family of 3d oval-bowls constructed by Hoffman-Ilmanen-Martin-White. The case of the 3d round bowl is excluded by the assumption that the tangent flow at $-\infty$ is a bubble-sheet, and the case $\mathbb{R}\times$2d-bowl is excluded by the assumption $\mathrm{rk}(Q)=1$. Finally, by \cite[Theorem 3.6]{CHH_translator} for the 3d oval-bowls we indeed have the same asymptotics as in \eqref{expansion_inward_bend}, and hence the expansion from Theorem \ref{spectral theorem} (bubble-sheet quantization)  holds again with the matrix $Q$ from \eqref{eq_mat_q_half}. This finishes the proof of the theorem.
\end{proof}

\bigskip

\section{The non-degenerate case}\label{sec_non_deg}

In this final section, we prove Theorem \ref{Rk=2} (non-degenerate case), which we restate here for convenience of the reader:

\begin{theorem}[non-degenerate case]\label{Rk=2_rest}Let $M_t$ be an ancient noncollapsed mean curvature flow in $\mathbb{R}^{4}$ whose tangent flow at $-\infty$ is given by \eqref{bubble-sheet_tangent_intro}.
If $\mathrm{rk}(Q)=2$, then $M_t$ is compact and $\mathrm{SO}(2)$-symmetric and satisfies the following sharp asymptotics:
\begin{itemize}
    \item Parabolic region: The bubble-sheet function $u$ for $\tau\to -\infty$ satisfies
    \begin{equation*}
     u(y_1,y_2,\theta,\tau)=\frac{y_1^2+y_2^2-4}{\sqrt{8}\tau} +o(|\tau|^{-1})
    \end{equation*}
        uniformly for $|(y_1,y_2)|\leq R$.
    \item Intermediate region: We have
\begin{equation*}
    \lim_{\tau\rightarrow -\infty}u(|\tau|^{\frac{1}{2}}z_1,|\tau|^{\frac{1}{2}}z_2,\theta, \tau)+\sqrt{2}=\sqrt{2-(z_1^2+z_2^2)}
\end{equation*}
    uniformly on every compact subset of $\{ z_1^2+z_2^2<\sqrt{2}\}$.
    \item Tip region:  Set $\lambda(s)=\sqrt{|s|^{-1}\log |s|}$, and given any angle $\phi$ let $p_{s}\in M_s$ be the point that maximizes $\langle p, \cos(\phi) e_1 + \sin(\phi) e_2\rangle$ among all $p\in M_s$. Then, as $s\to -\infty$ the rescaled flows 
    \begin{equation*}
        {\widetilde{M}}^{s}_{t}=\lambda(s)\cdot(M_{s+\lambda(s)^{-2}t}-p_{s})
    \end{equation*}
    converge to $\mathbb{R}\times N_{t}$, where $N_{t}$ is the 2d-bowl in $\mathbb{R}^{3}$ with speed $1/\sqrt{2}$.
\end{itemize}
\end{theorem}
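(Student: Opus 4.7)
The parabolic region asymptotic is immediate from Theorem \ref{spectral theorem_restated} applied with $Q=-\tfrac{1}{\sqrt{8}}\,I_{2\times 2}$: one computes $y^\top Q y-2\mathrm{tr}(Q)=\tfrac{1}{\sqrt{8}}(4-y_1^2-y_2^2)$, which after dividing by $|\tau|=-\tau$ gives exactly the claimed expansion. The substantive work is therefore to promote this parabolic-scale information to the intermediate and tip scales, to deduce compactness, and to establish $\mathrm{SO}(2)$-symmetry. My plan throughout is to follow the strategy of \cite{DH_ovals}, but with the crucial modification that the $\mathrm{SO}(2)\times\mathrm{SO}(2)$-symmetry available there must here be replaced by the \emph{almost} symmetry produced via Zhu's bubble-sheet improvement.

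For the intermediate region I would prove matching upper and lower bounds for $u(\sqrt{|\tau|}z,\theta,\tau)+\sqrt{2}$ against $\sqrt{2-|z|^2}$. For the \emph{lower bound} I would construct an inner barrier analogous to the $\Gamma_a$ of \eqref{rotated_barrier}, now rotationally symmetric in the full $(y_1,y_2)$-plane: using the parabolic asymptotic on a circle $|(y_1,y_2)|=R$ as matching data, one anchors an ADS-type shrinker-with-boundary inside $\bar M_\tau$ and propagates the inward quadratic bending out past the parabolic scale. For the \emph{upper bound}, I would first extend Proposition \ref{utheta} (almost symmetry) from radius $\rho=|\tau|^{\gamma}$ all the way out to $(\sqrt{2}-o(1))\sqrt{|\tau|}$ by iterating Zhu's bubble-sheet improvement across the intermediate region, using that the curvature remains bubble-sheet-close outside the tip scale. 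Once almost symmetry holds, the radial profile $v=\sqrt{2}+u$ satisfies an approximately one-dimensional equation for which $\sqrt{2-r^2/|\tau|}$ is a sharp supersolution modulo controllable error terms; this is the adaptation of the supersolution estimate from \cite{DH_ovals}. Compactness then follows automatically, since $\sqrt{2}+u\to 0$ as $|z|^2\to\sqrt{2}$ forces $\bar M_\tau$ to be bounded.

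For the tip region, the intermediate asymptotic pins down the tip curvature scale: along any direction $\cos\phi\,e_1+\sin\phi\,e_2$ the point $p_s$ lies at distance $\sim 2^{1/4}\sqrt{|s|}$ from the center and carries curvature of order $\lambda(s)=\sqrt{|s|^{-1}\log|s|}$ (the $\sqrt{\log|s|}$ factor arising from the matched asymptotic expansion between the intermediate region and the tip, exactly as in the ADS analysis). After rescaling by $\lambda(s)$ and translating $p_s$ to the origin, Brakke compactness and noncollapsedness produce a subsequential limit $\widetilde M^\infty$ which is ancient noncollapsed in $\mathbb{R}^4$; the intermediate asymptotic forces it to split off an $\mathbb{R}$-factor tangent to the ``ellipse'' $|z|^2=\sqrt{2}$ at the chosen direction, and the transverse 2d factor, being an ancient noncollapsed translator, must be a 2d-bowl by Brendle-Choi/ADS. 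Matching the translation speed against the rate at which $p_s$ recedes under the renormalized flow pins the speed to $1/\sqrt{2}$, and uniqueness of this model gives full, not only subsequential, convergence.

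Having the sharp three-region asymptotics in hand yields the canonical-neighborhood dichotomy: every space-time point at sufficiently negative time is modelled either on a round shrinking $\mathbb{R}^2\times S^1$ or on a translating $\mathbb{R}\times$2d-bowl. Full $\mathrm{SO}(2)$-symmetry is then obtained by combining Zhu's bubble-sheet improvement in the inner and intermediate regions with a Brendle-Choi type cap improvement over the tip (as in \cite{BC1,ADS2,Zhu}), and iterating forward in time. The main obstacle I anticipate is the upper bound in the intermediate region. Unlike the setting of \cite{DH_ovals}, we begin without any a priori symmetry assumption, so the one-dimensional supersolution comparison must be carried out with carefully controlled angular error terms from Zhu's iteration; moreover the comparison has to remain effective all the way to the boundary $|z|^2\to\sqrt{2}$, where the profile becomes vertical and the barrier argument is most delicate.
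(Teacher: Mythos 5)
Your proposal follows essentially the paper's path for the parabolic region, the intermediate region (lower bound via rotationally symmetric ADS-type inner barriers anchored at the parabolic scale; upper bound via extending almost symmetry to radius $\sqrt{(2-\delta)|\tau|}$ through iterated Zhu bubble-sheet improvement and then a one-dimensional supersolution comparison for $v=\sqrt{2}+u$), compactness, canonical neighborhoods, and the final $\mathrm{SO}(2)$-symmetry via Zhu's bubble-sheet and cap improvements. The substantive gap is in the tip region.

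You assert that the rescaled limit at the tip scale ``splits off an $\mathbb{R}$-factor tangent to the ellipse'' and that ``the transverse 2d factor, being an ancient noncollapsed translator, must be a 2d-bowl.'' Neither the line-splitting nor the translator property is justified: the coarse intermediate asymptotics say something about the geometry at scale $\sqrt{|s|}$, but at tip scale $\lambda(s)^{-1}=\sqrt{|s|/\log|s|}$ that information degenerates, and a blowup limit of an ancient noncollapsed flow need not split off a line nor be selfsimilar a priori (e.g.\ the 3d round bowl has a bubble-sheet tangent and does not split). The paper closes this gap differently: first, Hamilton's Harnack inequality $\frac{d}{dt}H(p_t)\ge 0$, combined with the mean curvature flow equation and the diameter asymptotics $d(t)=\sqrt{2|t|\log|t|}\,(1+o(1))$ (derived from the intermediate region), pins the tip curvature to $H(p_t)/\sqrt{|t|^{-1}\log|t|}\to 1/\sqrt{2}$. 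Second, the limit is identified \emph{not} by a splitting argument but by a wedge argument: by convexity the enclosed region contains the triangle with vertices $p_t,q_t^\pm$, and the intermediate asymptotics give a lower bound $\angle q_t^-p_tq_t^+\ge\pi/4$, so the rescaled limit is a noncompact ancient noncollapsed flow containing a wedge, hence by the wingless theorem \cite[Theorem 1.4]{CHH_wing} it is $\mathbb{R}\times$(2d-bowl). That theorem is the missing ingredient that delivers both the line-splitting and the soliton structure your proposal takes for granted. (Also a small slip: the tip distance is $\sqrt{2|s|\log|s|}\,(1+o(1))$, not $\sim 2^{1/4}\sqrt{|s|}$; you have dropped the $\sqrt{\log|s|}$ factor from the parabolic rescaling, and you have also inherited the statement's typo $z_1^2+z_2^2<\sqrt{2}$, which should read $z_1^2+z_2^2<2$ as in the proofs of the intermediate-region propositions.)
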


We will first establish the sharp asymptotics, which in particular will imply compactness, and afterwards will prove the circular symmetry.\\

As before, we work with the truncated bubble-sheet function
\begin{equation}
\hat{u}(y_1,y_2,\theta,\tau)=u(y_1,y_2,\theta,\tau)\chi\left(\frac{|(y_1,y_2)|}{|\tau|^\gamma} \right)\, ,
\end{equation}
where the exponent $\gamma>0$ is from Proposition \ref{radius_lower_bound} (graphical radius).

\begin{proposition}[parabolic region]\label{prop_parabolic}
The truncated bubble-sheet function $\hat{u}$ satisfies
\begin{equation}\label{inw_quad11}
\lim_{\tau\to -\infty} \left\|  \tau \hat{u}(y_1,y_2,\theta,\tau) - \frac{y_1^2+y_2^2-4}{\sqrt{8}} \right\|_{\mathcal{H}}=0\, .
\end{equation}
Moreover, there exist $\tau_\ast>-\infty$ and an increasing function $\delta:(-\infty,\tau_\ast)\to (0,1/100)$ with $\lim_{\tau\to -\infty}\delta(\tau)=0$ such that for $\tau\leq \tau_\ast$ we have
    \begin{equation}\label{inw_quad22}
\sup_{|(y_1,y_2)|\leq \delta(\tau)^{-1}} \left|    u(y_1,y_2,\theta,\tau) - \frac{y_1^2+y_2^2-4}{\sqrt{8}\tau} \right|\leq \frac{\delta(\tau)}{|\tau|}\, .
    \end{equation}
    In particular, the hypersurfaces $M_t$ are compact.

\end{proposition}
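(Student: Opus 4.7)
The plan is to treat the three assertions of the proposition one at a time.

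First, the $\mathcal{H}$-norm statement \eqref{inw_quad11} is a direct consequence of Theorem \ref{spectral theorem_restated} (bubble-sheet quantization) in the case $\mathrm{rk}(Q)=2$. The eigenvalue quantization clause of that theorem forces both eigenvalues of $Q$ to equal $-1/\sqrt{8}$, so $Q=-\tfrac{1}{\sqrt{8}}I_2$; then $y^\top Q y - 2\,\mathrm{tr}(Q) = \tfrac{1}{\sqrt{8}}(4-y_1^2-y_2^2)$, and using $|\tau|^{-1}=-\tau^{-1}$ for $\tau<0$ reproduces exactly the normalization in \eqref{inw_quad11}.

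Second, the uniform pointwise estimate \eqref{inw_quad22} on balls of slowly growing radius $\delta(\tau)^{-1}$ will be obtained from the $C^k(B_R)$-clause of Theorem \ref{spectral theorem_restated} by a standard diagonalization. For every integer $k\ge 100$ the theorem yields $\tau_k\to -\infty$ such that
\[
\sup_{|y|\le k}\Big|\, |\tau|\,u(y,\theta,\tau) - \tfrac{4-y_1^2-y_2^2}{\sqrt{8}}\,\Big|\le \tfrac{1}{k}\qquad\text{for all }\tau\le\tau_k.
\]
After arranging the sequence $\tau_k$ to be strictly decreasing, setting $\delta(\tau):=1/k$ on $(\tau_{k+1},\tau_k]$ yields an increasing function $\delta$ on $(-\infty,\tau_{100}]$ with $\delta(\tau)\to 0$ as $\tau\to-\infty$; by construction $\left|\,u - (y_1^2+y_2^2-4)/(\sqrt{8}\tau)\,\right| \le \delta(\tau)/|\tau|$ on $|y|\le \delta(\tau)^{-1}$.

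The delicate assertion is compactness, and here the plan is to argue by contradiction. If $M_t$ were noncompact and split off a line, say $M_t=\mathbb{R}\times N_t$ with split direction $y_1$, then $u$ would be independent of $y_1$, forcing $\alpha_1\equiv 0\equiv\alpha_3$ and hence $\mathrm{rk}(Q)\le 1$, contradicting $\mathrm{rk}(Q)=2$. If $M_t$ were noncompact and did not split off a line, then by \cite[Theorem 1.4]{CHH_wing} the blowdown of every time slice would be a ray; combined with the fine bubble-sheet analysis in \cite[Sections 5--7]{CHH_wing}, this setup forces the unstable mode to be dominant in the Merle--Zaag alternative, and the reduction at the start of Section \ref{sec_quant_thm} then gives $Q=0$, again contradicting $\mathrm{rk}(Q)=2$.

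I expect the main obstacle to be making the non-splitting subcase of the compactness step rigorous, since it requires threading the blowdown and fine bubble-sheet analyses of \cite{CHH_wing} together to conclude unstable-mode dominance under our present hypotheses. A backup route, should the path through \cite{CHH_wing} prove awkward, is to invoke the ADS-type inner barriers $\Gamma_a$ introduced in Section \ref{sec_fine_bubble_sheet}: the inward-quadratic bending quantified by \eqref{inw_quad22} at the outer edge of the parabolic region selects a shrinker barrier $\Gamma_a$ that must be enclosed by $\bar M_\tau$, forcing $\bar M_\tau$ to close up at a controlled radius and thereby contradicting noncompactness.
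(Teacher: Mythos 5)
Your treatment of \eqref{inw_quad11} and \eqref{inw_quad22} is correct and essentially what the paper does: \eqref{inw_quad11} follows by plugging $Q=-\tfrac{1}{\sqrt{8}}I_2$ into Theorem~\ref{spectral theorem_restated}, and the diagonalization you describe is a legitimate way to upgrade the "for every $R$, $C^k(B_R)$-convergence" clause to a ball of slowly growing radius (the paper compresses this into the phrase "standard parabolic estimates").

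However, your compactness argument has a genuine gap. In subcase (b) you assert that a noncompact, non-splitting ancient noncollapsed flow with bubble-sheet tangent "forces the unstable mode to be dominant." This is false: the Hoffman--Ilmanen--Martin--White 3d oval-bowls are noncompact, do not split off a line, and yet have dominant neutral mode with $\mathrm{rk}(Q)=1$ (this is precisely the content of Theorem~\ref{thm_halfdeg}). Theorem 1.10 of \cite{CHH_wing} only classifies the flows \emph{under the hypothesis} of unstable-mode dominance; the blowdown-is-a-ray result from their Theorem 1.4 does not by itself yield that hypothesis. So case (b) does not go through, and the remaining cases do not exhaust the noncompact scenario. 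Your backup route via the ADS barriers $\Gamma_a$ also does not work as stated: those are \emph{inner} barriers, so "$\Gamma_a$ is enclosed by $\bar M_\tau$" produces a lower bound on the profile, not closure.

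The correct argument, which is what the paper means by "together with convexity," is direct and avoids any classification. Since ancient noncollapsed flows bound convex regions, the radial profile $r(y_1,y_2)=\sqrt{2}+u(y_1,y_2,\theta,\tau)$ (for each fixed $\theta$) is concave in $(y_1,y_2)$ wherever defined. By \eqref{inw_quad22}, at $|(y_1,y_2)|=\delta(\tau)^{-1}$ the radial derivative of $r$ is at most $-c\,\delta(\tau)^{-1}/|\tau|$ with $c>0$. Concavity then forces $r$ to reach $0$ at some radius $\lesssim \delta(\tau)|\tau|$, i.e. $\bar M_\tau$ closes up at a finite controlled radius, so $M_t$ is compact.
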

\begin{proof}
By the assumption $\mathrm{rk}(Q)=2$ we have
\begin{equation}\label{eq_mat_q}
Q=
\begin{pmatrix}
-1/\sqrt{8} & 0\\
0 & -1/\sqrt{8}
\end{pmatrix}\, ,
\end{equation}
hence
\begin{equation}
y^\top Q y - 2\mathrm{tr}(Q) = -\frac{y_1^2+y_2^2-4}{\sqrt{8}}\, .
\end{equation}
Applying Theorem \ref{spectral theorem_restated}  (bubble-sheet quantization) this proves \eqref{inw_quad11}. Together with standard parabolic estimates this yields \eqref{inw_quad22}. Finally, together with convexity this implies that the hypersurfaces $M_t$ are compact.
\end{proof}

Next, to capture the intermediate region, we consider the function
\begin{equation}
\bar{u}(z_1,z_2,\theta, \tau)=\sqrt{2}+u(|\tau|^{\frac{1}{2}}z_1,|\tau|^{\frac{1}{2}}z_2,\theta, \tau)\, .
\end{equation}

\begin{proposition}[intermediate region lower bound]\label{prop_inter_lower}
For any compact subset $K\subset \{ z_1^2+z_2^2<2\}$ we have
\begin{equation}
    \liminf_{\tau\rightarrow-\infty} \inf_{(z_1,z_2)\in K} \left(\bar{u}(z_1,z_2,\theta, \tau)-\sqrt{2-(z_1^2+z_2^2)}\right)\geq 0.
\end{equation}
\end{proposition}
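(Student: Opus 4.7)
The plan is to construct interior barriers in the intermediate scale whose profiles approximate $\sqrt{2-|z|^2}$ from below, and to invoke the inner barrier principle, anchored by the sharp parabolic asymptotics of Proposition~\ref{prop_parabolic}. Fix a compact $K\subset \{z_1^2+z_2^2< 2\}$ and a small $\epsilon>0$ with $|z|^2 \le 2-4\epsilon$ on $K$.

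First, in the rescaled variables $z=y/|\tau|^{1/2}$, Proposition~\ref{prop_parabolic} together with the Taylor expansion of $\sqrt{2-|z|^2}$ at $z=0$ yields, uniformly on the shrinking ball $\{|z|\le |\tau|^{\gamma-1/2}\}$,
\begin{equation*}
\bar u(z,\theta,\tau)=\sqrt{2}-\frac{|z|^2}{\sqrt{8}}+O(|\tau|^{-1})=\sqrt{2-|z|^2}+O(|z|^4)+O(|\tau|^{-1}).
\end{equation*}
In particular, for any fixed small $r_0=r_0(\epsilon)>0$ one has $\bar u(z,\theta,\tau) > \sqrt{2-|z|^2-\epsilon}$ on $\{|z|\le r_0\}$ for all $\tau\le \tau_0(\epsilon)$ sufficiently negative; this is the central anchor.

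Next, for each such $\epsilon>0$ I would construct a compact $\mathrm{SO}(2)$-symmetric hypersurface $\tilde\Sigma^{\epsilon}_\tau\subset \mathbb{R}^4$ (symmetric under rotations in the $(x_3,x_4)$-plane) whose profile in the rescaled variables lies strictly below $\sqrt{2-|z|^2-\epsilon}$ on $\{|z|^2\le 2-2\epsilon\}$ and which closes off near $|z|^2=2-2\epsilon$ into tip caps modeled on scaled translating $2$d-bowls of speed $1/\sqrt{2}$. This construction is a $4$-dimensional analogue of the ADS shrinker-with-boundary \eqref{ads_shrinker} and its rotated variant $\Gamma_a$ from \eqref{rotated_barrier}: an inwards-bending profile in both $z_1, z_2$ variables is smoothly joined at $|z|^2 = 2-2\epsilon$ to scaled translating-bowl caps. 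By a direct computation adapting \cite{DH_ovals} and using the almost circular symmetry from Proposition~\ref{utheta} to absorb $\theta$-dependent errors as exponentially small perturbations, one verifies that $\tilde\Sigma^{\epsilon}_\tau$ satisfies the strict inner-barrier inequality $H-\tfrac{1}{2}\langle q,\nu\rangle>0$ away from the tip gluing region, with the excess bounded below in terms of $\epsilon$.

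With the barrier in place, it remains to verify initial enclosure and propagate. For $\tau_1=\tau_1(\epsilon)\le \tau_0(\epsilon)$ sufficiently negative, I would check that $\tilde\Sigma^{\epsilon}_{\tau_1}$ is enclosed by $\bar M_{\tau_1}$: in the central rescaled ball $\{|z|\le r_0\}$ this is the anchor above; in the remainder of the body and at the tip caps it follows from convexity of $\bar M_{\tau_1}$, combined with the fact that the barrier body lies strictly inside the solid cylinder $\{r\le \sqrt{2-\epsilon}\}$ and with the $\Gamma_a$-foliation argument from \cite{CHH_wing} adapted to the present setting. The inner barrier principle of \cite[Corollary~3.4]{CHH_wing} then propagates the enclosure to all $\tau\ge \tau_1$, yielding $\bar u\ge \sqrt{2-|z|^2-\epsilon}$ on $K$ for $\tau\ll 0$; sending $\epsilon\to 0$ gives the claim. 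The main obstacle I expect is precisely the barrier construction together with the strict subsolution verification---essentially a Bernstein-type calculation in the two $z$-directions glued to the tip bowls, in the spirit of \cite{DH_ovals}---and the initial-enclosure check in the annular body, where direct graphical control is absent and one must argue via convexity and the $\Gamma_a$-foliation rather than by graphs over the cylinder.
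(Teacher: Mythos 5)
Your overall strategy---anchor in the parabolic region via Proposition~\ref{prop_parabolic}, slip an inner barrier underneath, propagate with the inner barrier principle---is the right skeleton, but the execution diverges from the paper at two substantive points, one of which is a genuine gap.

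First, and most importantly, there is no need (and it would be considerably harder) to construct a new barrier by gluing an inwards-bending quadratic profile to scaled translating-bowl caps. The paper simply reuses the existing ADS shrinkers-with-boundary $\Sigma_a$ in their rotated and radially shifted form $\Gamma_a^\eta$ from \eqref{rotated_barrier_rest}; these are already proven to be inner barriers for the renormalized flow in \cite[Prop.~3.1]{CHH_translator}, and \cite[Lemmas~4.3 and~4.4]{ADS1} provide precisely the two quantitative facts one needs: (i) the refined upper bound $u_a(r)\le \sqrt{2}-\tfrac{r^2-3}{\sqrt{2}a^2}$ for $0\le r\le M(a)$, which is used to match the barrier at the anchor circle, and (ii) the asymptotic $u_a(r)=\sqrt{2-2r^2/a^2}+o(1)$ uniformly as $a\to\infty$, which converts the enclosure into the desired profile lower bound after convexity. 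Your proposed bowl-cap gluing and strict-subsolution verification are a much larger undertaking, of the kind one does when \emph{constructing} ancient solutions, and they are not needed to prove a lower bound. Also, the inner barrier argument needs no circular symmetry of $\bar M_\tau$: the barrier $\Gamma_a^\eta$ is a fixed rotationally symmetric hypersurface and the enclosure argument is purely geometric, so invoking Proposition~\ref{utheta} here is unnecessary.

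Second, there is a gap in your anchoring step. You assert that Proposition~\ref{prop_parabolic} gives the lower bound $\bar u > \sqrt{2-|z|^2-\epsilon}$ on a \emph{fixed} ball $\{|z|\le r_0\}$ for $\tau\ll 0$. But \eqref{inw_quad22} only controls $u$ on $\{|(y_1,y_2)|\le\delta(\tau)^{-1}\}$, which in the $z$-coordinate $z=y/|\tau|^{1/2}$ is the ball of radius $\delta(\tau)^{-1}|\tau|^{-1/2}$; there is no reason this should cover a fixed $r_0$-ball for $\tau\to-\infty$ (one has no rate on $\delta(\tau)\to0$). The paper handles this by anchoring the barrier not on a filled region but at a single circle $|(y_1,y_2)|=L(\hat\tau)$, with $L(\hat\tau)\le\delta(\hat\tau)^{-1}$ chosen carefully (also $L\le M(|\hat\tau|^{1/2})$ and $L\le|\hat\tau|^{1/2-1/100}$) so that both the parabolic asymptotics and the refined estimate on $u_a$ apply there, and with the parameters $\hat a$, $\hat\eta$ tuned so that the barrier sits strictly inside $\bar M_\tau$ at that circle. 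The barrier then extends the bound to $|(y_1,y_2)|\ge L(\hat\tau)$, which is where the intermediate region lives. Your proposal, as written, does not supply this quantitative matching at the anchor and relies on a fixed-ball anchor that the parabolic asymptotics do not give.
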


\begin{proof}We will generalize the argument from our prior paper \cite[proof of Proposition 2.8]{DH_ovals} to the setting without symmetry assumptions.\\
By \cite[Lemma 4.4]{ADS1} there exists an increasing positive function $M(a)$ with $\lim_{a\to \infty}M(a)= \infty$, such that the profile function $u_a$ of the ADS-barrier $\Sigma_a$ defined in \eqref{ads_shrinker} for $0\leq r\leq M(a)$ satisfies
\begin{equation}
    u_{a}(r)\leq \sqrt{2}-\frac{r^2-3}{\sqrt{2}a^2}\, .
\end{equation}
We fix $\tau_\ast$ negative enough, and for $\tau\leq\tau_\ast$ set
\begin{equation}
    L(\tau)=\min\{\delta(\tau)^{-1}, M(|\tau|^{\frac{1}{2}}), |\tau|^{\frac{1}{2}-\frac{1}{100}}\} \, ,\quad \hat{a}(\tau)=\sqrt{\frac{2|\tau|}{1+L(\tau)^{-1}}}\, ,   
\end{equation}
where $\delta(\tau)$ is the function from Proposition \ref{prop_parabolic} (parabolic region).
Then, for $\hat{\tau}\leq\tau_\ast$, we get
\begin{align}\label{b2}
    u_{\hat{a}(\hat{\tau})}\left(L(\hat{\tau})+3L(\hat{\tau}\right)^{-1})
    \leq \sqrt{2} -\frac{L(\hat{\tau})^2}{\sqrt{8}|\hat{\tau}|}\, .
\end{align}
On the other hand, by Proposition \ref{prop_parabolic} (parabolic region) we have
\begin{equation}\label{b1}
       \sqrt{2}+ u(y_1,y_2,\theta, \tau)\geq \sqrt{2}-\frac{L(\hat{\tau})^2}{\sqrt{8}|\hat{\tau}|}\, ,
    \end{equation}
whenever $|(y_1,y_2)|=L(\hat{\tau})$ and $\tau\leq \hat{\tau}$. 
Hence, if we consider the shifted and rotated hypersurfaces
\begin{equation}\label{rotated_barrier_rest}
\Gamma_a^\eta=\{(r\cos\theta ,r\sin\theta,y_3,y_4)\in  \mathbb{R}^4:\theta\in [0,2\pi), (r-\eta,y_3,y_4) \in {\Sigma}_a \},
\end{equation}
with $\eta=\hat{\eta}(\hat{\tau})=3L(\hat{\tau})^{-1}$ and $a=\hat{a}(\hat{\tau})$ as above, then applying the inner barrier principle from \cite[Proposition 3.1]{CHH_translator}  we infer that
\begin{equation}
   \sqrt{2}+u(y_1,y_2,\theta,\tau) \geq    u_{\hat{a}(\hat{\tau})}\left(|(y_1,y_2)|+\hat{\eta}(\hat{\tau})\right) \, ,
\end{equation}
whenever  $ |(y_1,y_2)|\geq L(\hat{\tau})$ and  $\tau\leq \hat{\tau}$. Moreover, by \cite[Lemma 4.3]{ADS1}, we have
\begin{equation}
u_a(r)=\sqrt{2-\frac{2r^2}{a^2}}+o(1)
\end{equation}
uniformly in $r$ as $a\to \infty$. Together with convexity we conclude that
\begin{equation}
    \liminf_{\tau\rightarrow-\infty} \inf_{(z_1,z_2)\in K} \left(\bar{u}(z_1,z_2,\theta, \tau)-\sqrt{2-(z_1^2+z_2^2)}\right)\geq 0.
\end{equation}
This proves the proposition.
\end{proof}

Observe that by the lower bound, for any angle $\phi$ we have
 \begin{equation}
 \max_{p\in M_t}\langle p, \cos(\phi) e_1 + \sin(\phi) e_2\rangle \geq  \sqrt{(2-o(1))|t|\log|t|}\, .
\end{equation}
In particular, the bubble-sheet function $u(y_1,y_2,\theta,\tau)$ is well-defined whenever $|(y_1,y_2)|\leq \sqrt{(2-o(1))|\tau|}$.
To proceed, we prove almost symmetry away from these tip regions:

\begin{lemma}[almost symmetry]\label{lemma_almost_symm_further}
For every $\delta>0$, there exist constants $\eta>0$ and $\tau_\ast>-\infty$, such  that for all $\tau\leq\tau_\ast$ we have
\begin{equation}
 \sup_{|(y_1,y_2)|\leq \sqrt{(2-\delta)|\tau|}}    |u_{\theta}(y_1,y_2,\theta,\tau)|\leq e^{-\eta |\tau|^{1/2}}\, .
\end{equation}
\end{lemma}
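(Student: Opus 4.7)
The plan is to extend the iterative argument from the proof of Proposition \ref{utheta} (almost circular symmetry) to the much larger intermediate region, using Proposition \ref{prop_inter_lower} (intermediate region lower bound) to verify the hypotheses of Zhu's bubble-sheet improvement theorem throughout the region $|(y_1,y_2)| \leq \sqrt{(2-\delta/2)|\tau|}$.

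I would first show that for $\tau\ll 0$ every space-time point $X$ corresponding to $(y_1,y_2,\theta,\tau)$ with $|(y_1,y_2)| \leq \sqrt{(2-\delta/2)|\tau|}$ is $\varepsilon_0$-close to some bubble-sheet $\mathbb{R}^2 \times S^1(r_X)$ at scale $1/H(X)$. Indeed, Proposition \ref{prop_inter_lower} combined with convexity (which gives $u\leq 0$ in the graphical region via comparison with the outer cylinder $\Gamma$) yields the uniform pinching $\sqrt{\delta/4} \leq \sqrt{2}+u \leq \sqrt{2}$, while standard interior estimates applied to the intermediate asymptotics yield $|\nabla u| = o(1)$ in this region. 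Since the mean curvature in the rescaled frame is then bounded above and below by constants depending only on $\delta$, we get the required bubble-sheet closeness, and hence every such $X$ is $\varepsilon_0$-symmetric with respect to the rotation vector field of its local bubble-sheet.

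Next, I would iterate Zhu's improvement theorem \cite[Theorem 3.7]{Zhu} from the outer boundary inward, exactly as in the proof of Lemma \ref{improvement} (circular improvement). In the rescaled frame, the step size $L/H$ per iteration is bounded above by a constant $L_1 = L_1(L,\delta)$. Reaching from renormalized radius $\sqrt{(2-\delta/2)|\tau|}$ inward to radius $\sqrt{(2-\delta)|\tau|}$ thus requires at least $N = c_\delta \sqrt{|\tau|}$ iterations, yielding symmetry error at most $\varepsilon_0 2^{-N} \leq e^{-\eta_0 \sqrt{|\tau|}}$ for some $\eta_0 = \eta_0(\delta) > 0$ at every point in the target region. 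Finally, I would convert the symmetry defect into a bound on $u_\theta$ following the final part of the proof of Proposition \ref{utheta}: the normalized rotation vector fields $K_{(q,\tau)}$ associated to different space-time points converge to the fixed $K_\infty(x) = Jx$ coming from the bubble-sheet tangent flow at $-\infty$, with exponential rate $e^{-\eta_0\sqrt{|\tau|}/2}$ via telescoping and \cite[Lemma 3.5]{Zhu}, and the estimate $\tfrac{1}{2}|u_\theta| \leq H|\langle K_\infty,\nu\rangle| \leq Ce^{-\eta_0\sqrt{|\tau|}/2}$ gives the conclusion after slightly decreasing $\eta$.

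The main technical obstacle is the first step, specifically verifying the uniform bubble-sheet closeness right up to the boundary at $\sqrt{(2-\delta/2)|\tau|}$. One needs together: the quantitative lower bound on $\sqrt{2}+u$ from Proposition \ref{prop_inter_lower}, a quantitative upper bound from comparing with the outer cylinder using convexity, and the gradient decay $|\nabla u|=o(1)$ in the intermediate region (which can be obtained from the intermediate asymptotics via standard interior estimates). The hypothesis $\delta > 0$ is essential: as $r^2/|\tau| \to 2$ the cross-sectional radius degenerates to zero and the bubble-sheet hypothesis of Zhu's theorem fails, so one cannot push the symmetry improvement all the way into the tip region by this method.
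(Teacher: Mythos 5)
Your overall strategy mirrors the paper's: verify uniform bubble-sheet closeness in the region $|(y_1,y_2)|\leq \sqrt{(2-\delta/2)|\tau|}$, then iterate Zhu's improvement inward $\sim\sqrt{|\tau|}$ times, and finally convert the symmetry defect into the $u_\theta$ estimate as in Proposition~\ref{utheta}. Steps two and three are essentially identical to the paper's argument, including the correct count of iterations and the telescoping against $K_\infty$.

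The divergence, and the gap, is in step one. The paper establishes the $\eps_0$-bubble-sheet closeness by a compactness/contradiction argument: if some sequence of space-time points $(p_i,t_i)$ with $|p_i|\leq\sqrt{(2-\delta/2)|t_i|\log|t_i|}$ failed to lie on the center of an $\eps_0$-bubble-sheet, one rescales by $H(p_i,t_i)$ and invokes the global convergence theorem \cite[Theorem 1.12]{HaslhoferKleiner_meanconvex}; because these points are quantitatively far from the tips (using Proposition~\ref{prop_inter_lower}) and from extinction, the blowup limit splits off two lines, hence is a round shrinking $\mathbb{R}^2\times S^1$ by \cite[Lemma 3.14]{HaslhoferKleiner_meanconvex}, a contradiction. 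Your direct quantitative substitute has three problems. First, the claim ``$u\leq 0$ in the graphical region via comparison with the outer cylinder'' is false: by Proposition~\ref{prop_parabolic} we have $u(y,\theta,\tau)=\tfrac{y_1^2+y_2^2-4}{\sqrt{8}\tau}+o(|\tau|^{-1})$, which is strictly positive for $|(y_1,y_2)|<2$ and $\tau\ll 0$, so $\bar M_\tau$ is outside $\Gamma$ in the parabolic core. (This is fixable — an upper bound $\sqrt{2}+u\leq\sqrt{2}+C/|\tau|$ would do — but it signals that the picture you are relying on is off.) Second, and more seriously, radius pinching together with $|\nabla u|=o(1)$ does not by itself yield ``$\eps_0$-close to a bubble-sheet'' in the strong parabolic $C^k$ sense required by Zhu's theorem at scale $1/H(X)$; you would need higher-derivative and backward-in-time control, which is exactly what the compactness argument delivers without further work. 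Third, your source for $|\nabla u|=o(1)$ is ``standard interior estimates applied to the intermediate asymptotics'', but the intermediate upper bound (Proposition~\ref{prop_inter_upper}) is proved after, and indeed using, the lemma you are trying to prove, so invoking it here risks circularity; only Proposition~\ref{prop_inter_lower} and the parabolic region asymptotics are available at this point, and neither gives a pointwise two-sided expansion across the whole intermediate region. Replacing your first step with the paper's compactness argument repairs all three issues at once.
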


\begin{proof} We first claim that given any $\eps_0>0$ we can find $T_0>-\infty$ so that all space-time points $X=(p,t)\in \mathcal{M}$ with $|p|\leq \sqrt{(2-\delta/2)|t|\log|t|}$ and $t\leq T_0$ lie on the center of an $\eps_0$-bubble-sheet. Indeed, if there were a sequence $t_i\to -\infty$ and points $p_i\in M_{t_i}$ that do not lie on the center of an $\eps_0$-bubble-sheet then by the global convergence theorem \cite[Theorem 1.12]{HaslhoferKleiner_meanconvex} after passing to a subsequence
\begin{equation}
\widetilde{M}^i_t:=H(p_i,t_i)\cdot(M_{t_i+H(p_i,t_i)^{-2}t}-p_{i})
\end{equation}
would converge to a limit that splits off two lines, and hence by \cite[Lemma 3.14]{HaslhoferKleiner_meanconvex} is a round shrinking $\mathbb{R}^2\times S^1$. This is a contradiction and thus establishes the claim. Using the property that every point under consideration is $\eps_0$-close to a bubble-sheet, and arguing similarly as in the proof of Proposition \ref{utheta} (almost circular symmetry) the assertion follows.
\end{proof}

We can now establish a matching upper bound:

\begin{proposition}[intermediate region upper bound]\label{prop_inter_upper}
For any compact subset $K\subset \{ z_1^2+z_2^2<2\}$ we have
\begin{equation}
    \limsup_{\tau\rightarrow-\infty} \sup_{(z_1,z_2)\in K} \left(\bar{u}(z_1,z_2,\theta, \tau)-\sqrt{2-(z_1^2+z_2^2)}\right)\leq 0.
\end{equation}
\end{proposition}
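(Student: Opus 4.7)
The plan is to establish the upper bound by a comparison argument with a radial outer barrier modeled on the self-similar profile $\sqrt{2-r^2/|\tau|}$, generalizing the supersolution construction of \cite{DH_ovals} from the $\mathrm{SO}(2)\times\mathrm{SO}(2)$-symmetric setting to the present situation. The key observation is that the renormalized graphical equation from Proposition \ref{MCF_graphical_equation} admits a pointwise parabolic comparison principle in $(y_1,y_2,\theta)$, so a $(y_1,y_2)$-radial and $\theta$-independent supersolution that dominates $F:=\sqrt 2 + u$ on the parabolic boundary of a suitable spacetime region will dominate it in the interior for every $\theta$.

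For each small $\mu>0$, I would define the candidate outer barrier
\begin{equation*}
F^+_{\mu}(y_1,y_2,\tau) := \sqrt{2-\tfrac{y_1^2+y_2^2}{|\tau|-\mu^{-1}}} + \tfrac{\mu}{|\tau|^{1/2}}.
\end{equation*}
Since $F^+_\mu$ is independent of $\theta$, its substitution into the renormalized graphical PDE of Proposition \ref{MCF_graphical_equation} annihilates all $\theta$-derivative terms, and the remaining computation, upgrading the one-dimensional calculation in the proof of \cite[Proposition 2.8]{DH_ovals} to two transversal radial dimensions, yields the strict supersolution inequality
\begin{equation*}
\partial_\tau F^+_{\mu} - \mathcal{N}[F^+_{\mu}] \ge c\mu |\tau|^{-3/2}
\end{equation*}
for some $c>0$ and $\tau\ll 0$, where $\mathcal{N}$ denotes the RHS of the graphical equation. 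The time shift $\mu^{-1}$ inside the square root and the additive shift $\mu|\tau|^{-1/2}$ each provide a definite amount of slack.

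For the matching, at the inner boundary $r=\delta(\tau)^{-1}$ Proposition \ref{prop_parabolic} gives $F = \sqrt 2 + (r^2-4)/(\sqrt 8\,\tau) + o(|\tau|^{-1})$ uniformly in $\theta$, whereas Taylor expansion yields $F^+_{\mu} = \sqrt 2 - r^2/(\sqrt 8\,|\tau|) + \mu|\tau|^{-1/2} + O(|\tau|^{-2})$, so $F < F^+_\mu$ there for $\tau\ll 0$. On an asymptotic initial slice $\tau_1\to -\infty$, bubble-sheet convergence gives $F\to\sqrt 2$ locally uniformly while $F^+_\mu > \sqrt 2$ pointwise, so the ordering also holds on the initial slice. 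Near the outer boundary of the barrier's domain, the lower bound of Proposition \ref{prop_inter_lower} together with compactness of $M_t$ confines the flow to the interior of this region, with the thin sliver up to the tip handled by an auxiliary capping barrier anchored at a slightly smaller radius. The parabolic comparison principle then gives $F \le F^+_{\mu}$; evaluating at $(y_1,y_2) = (|\tau|^{1/2}z_1, |\tau|^{1/2}z_2)$ for $(z_1,z_2)\in K\Subset\{z_1^2+z_2^2<2\}$ and sending $\tau\to -\infty$ followed by $\mu\to 0$ yields the claimed upper bound. The almost-symmetry Lemma \ref{lemma_almost_symm_further} enters as a supporting tool: by making the flow's $\theta$-dependence exponentially small in the region of interest, it ensures that the matching estimates are uniform in $\theta$ and that the interior geometric quantities are well controlled.

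The main obstacle will be the two-dimensional supersolution calculation: one must verify the strict inequality above uniformly across the full annular region where the barrier is used, including the singular-looking outer zone where $F^+_{\mu}$ becomes small and the $-1/F^+_\mu$ term in $\mathcal{N}$ is delicate. Compared with the one-dimensional computation in \cite{DH_ovals}, the additional radial term $F^+_{\mu,r}/r$ stemming from the extra transversal dimension produces corrections that must be tracked through the entire algebraic expansion of $\mathcal{N}[F^+_\mu]$; the favourable sign of these corrections together with the shrinker structure of the leading-order profile is what ultimately generates the required slack, but careful bookkeeping and a clean identification of the dominant error terms is what makes this step nontrivial.
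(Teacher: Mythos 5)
Your barrier approach is genuinely different from what the paper does, and it has a real gap at the outer lateral boundary. The paper does not construct an explicit supersolution to the full quasilinear equation at all. Instead, it observes that convexity of $M_t$ makes the entire second-order (elliptic) part of the operator in Proposition~\ref{MCF_graphical_equation} nonpositive, and almost-symmetry (Lemma~\ref{lemma_almost_symm_further}) makes the $\theta$-derivative contributions exponentially small, leaving the scalar \emph{first-order} inequality
$v_\tau \le -1/v + \tfrac12 (v - y_\alpha\partial_\alpha v) + Ce^{-\eta|\tau|^{1/2}}$.
Setting $w=v^2-2$ turns this into a linear transport inequality $w_\tau \le w - \tfrac12\rho w_\rho + Ce^{-\eta|\tau|^{1/2}}$, which one integrates along the characteristics $\rho = \rho_0 e^{\tau/2}$, anchoring the estimate at the inner (parabolic region) data from Proposition~\ref{prop_parabolic}. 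Crucially, this characteristic-type integration propagates information radially outward from the parabolic region and never requires any boundary information at large $\rho$ — there is no outer boundary in the argument.

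By contrast, your parabolic comparison principle for the full quasilinear PDE over a spacetime region requires ordering $F \le F^+_\mu$ on the \emph{entire} parabolic boundary, including the outer lateral boundary where $r$ is comparable to $\sqrt{2|\tau|}$. You do not establish this ordering, and it appears to be circular: at radius $r=\sqrt{2(1-\varepsilon)|\tau|}$ the barrier value is $F^+_\mu \approx \sqrt{2\varepsilon} + \mu|\tau|^{-1/2} + O(|\tau|^{-1})$, while all you know about the flow there from Proposition~\ref{prop_inter_lower} is the \emph{lower} bound $v \gtrsim \sqrt{2\varepsilon}-o(1)$, which is of no help for $v \le F^+_\mu$; an a priori upper bound of size $\sqrt{2\varepsilon} + o(|\tau|^{-1/2})$ is essentially what the proposition is trying to prove. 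Invoking compactness and an unspecified "auxiliary capping barrier" does not close this: near the tip the surface ceases to be a graph over the cylinder, $F^+_\mu$ tends to zero, and the equation degenerates, so you would have to construct and verify a genuine closed outer barrier hypersurface enclosing $M_t$ — a substantially harder object than what you sketch. The phrase "supersolution estimate from \cite{DH_ovals}" in the paper's outline does not refer to an explicit barrier; it refers to precisely this convexity-plus-transport argument, which avoids the outer boundary issue entirely. If you want to salvage the barrier route you would need either to build and verify a closed enclosing barrier including a cap modeled on $\mathrm{Bowl}_2\times\mathbb{R}$ and matched to the intermediate region, or to find an independent a priori upper bound at the outer boundary — neither of which is present in your proposal.
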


\begin{proof}
We will generalize the argument from our prior paper \cite[proof of Proposition 2.8]{DH_ovals} to the setting without symmetry assumptions.\\
By Proposition \ref{MCF_graphical_equation} (evolution over cylinder), convexity, and Lemma \ref{lemma_almost_symm_further} (almost symmetry) the graphical function $v=\sqrt{2}+u$ satisfies
\begin{align}\label{v_evolution_inequ}
    v_\tau\leq 
     -\frac{1}{v}+\frac{1}{2}\left(v- y_\alpha \partial_\alpha v\right)+ C e^{-\eta |\tau|^{1/2}} \, 
\end{align}
for $\tau\ll0$ and $|(y_1,y_2)|\leq \sqrt{(2-\delta)|\tau|}$, where $C=C(\delta)<\infty$. Now, given any angles $\phi$ and $\theta$, considering the function
\begin{equation}
w(\rho,\tau):=v(\rho\cos\phi,\rho\sin\phi,\theta,\tau)^2-2\, ,
\end{equation}
we infer that
\begin{equation}
   w_\tau\leq w-\tfrac12 \rho w_\rho + Ce^{-\eta|\tau|^{1/2}}\, . 
\end{equation}
Hence, for every $\rho_{0}>0$ we have
\begin{equation}
    \frac{d}{d\tau}(e^{-\tau}w(\rho_{0} e^{\frac{\tau}{2}}, \tau))\leq Ce^{-\tau-\eta|\tau|^{1/2}}
\end{equation}
for $\tau\ll 0$. Integrating this inequality, we obtain for every $\lambda\in (0, 1]$ that
\begin{equation}\label{ODEv}
    w(\rho, \tau)\leq \lambda^{-2}w(\lambda \rho, \tau+2\log\lambda)+o(|\tau|^{-1}).
\end{equation}
On the other hand, by Proposition \ref{prop_parabolic} (parabolic region), given any $A<\infty$, the inequality
 \begin{equation}\label{limitv}
        w(\rho, \tau)\leq |\tau|^{-1}({4-\rho^2})+o(|\tau|^{-1})
    \end{equation}
    holds for $\rho\leq A$.
Thus, for $\rho\geq A$ we obtain
\begin{equation}
    w(\rho,\tau)\leq -\frac{(1-2A^{-2})\rho^2}{|\tau|+2 \log(\rho/A)}+o(|\tau|^{-1}).
\end{equation}
This implies the assertion.
\end{proof}

To analyze the tip region we define the rescaling factor
\begin{equation}
\lambda(s)=\sqrt{|s|^{-1}\log |s|}\, ,
\end{equation}
and given any angle $\phi$ consider the point $p_{s}\in M_s$ where
\begin{equation}
 \max_{p\in M_s}\langle p, \cos(\phi) e_1 + \sin(\phi) e_2\rangle
\end{equation}
is attained (note that $p_s$ is unique by strict convexity). Then, we have:
 
\begin{proposition}[tip region]\label{prop_tip}
 For $s\to -\infty$ the rescaled flows 
    \begin{equation}
        {\widetilde{M}}^{s}_{t}=\lambda(s)\cdot(M_{s+\lambda(s)^{-2}t}-p_{s})
    \end{equation}
    converge to $L\times N_{t}$, where $L$ is the line spanned by $-\sin(\phi)e_1+\cos(\phi)e_2$ and $N_{t}$ is the 2d-bowl with speed $1/\sqrt{2}$ moving in direction $-(\cos(\phi) e_1 + \sin(\phi) e_2)$.
\end{proposition}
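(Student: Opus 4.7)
The plan is to generalize the tip region argument of our prior paper \cite{DH_ovals} to the present non-symmetric setting. In \cite{DH_ovals} the a priori $\mathrm{O}(2)\times \mathrm{O}(2)$-symmetry was used directly to split off a line; here, the relevant approximate rotational symmetry in the $(y_1,y_2)$-plane will come instead from the sharp intermediate region asymptotics in Propositions \ref{prop_inter_lower} and \ref{prop_inter_upper}, while the transverse $\mathrm{SO}(2)$-symmetry of the bowl factor will come from Lemma \ref{lemma_almost_symm_further}.

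\textbf{Step 1 (Compactness).} The intermediate region asymptotics give $\langle p_s, e_\phi\rangle = \sqrt{(2+o(1))|s|\log|s|}$ with $e_\phi := \cos\phi\, e_1 + \sin\phi\, e_2$. Standard parabolic regularity at the matching scale, combined with noncollapsing and convexity, yields the two-sided bound $C^{-1}\lambda(s)\leq H(p_s,s)\leq C\lambda(s)$. The global convergence theorem \cite[Theorem 1.12]{HaslhoferKleiner_meanconvex} then produces a subsequential $C^\infty_{\mathrm{loc}}$-limit $\widetilde{M}^{\infty}_t\subset \mathbb{R}^4$ which is ancient, noncollapsed, convex, and eternal.

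\textbf{Step 2 (Line splitting).} The heart of the argument is to show that any such $\widetilde{M}^{\infty}_t$ is translation invariant along $e_L := -\sin\phi\, e_1 + \cos\phi\, e_2$. For fixed $\rho\in\mathbb{R}$, set $\theta_s := \rho/\sqrt{2\log|s|}\to 0$, and let $p_s^{\rho}\in M_s$ be the unique point maximizing $\langle\cdot, e_{\phi+\theta_s}\rangle$. Using Propositions \ref{prop_inter_lower} and \ref{prop_inter_upper} (whose limit profile $\sqrt{2-z_1^2-z_2^2}$ is $(y_1,y_2)$-rotationally invariant) together with strict convexity, one verifies $\lambda(s)(p_s^{\rho}-p_s)\to \rho\, e_L$. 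Re-running Step 1 with $\phi$ replaced by $\phi+\theta_s$ and noting that $\theta_s\to 0$, the rescaled flows centered at $p_s^{\rho}$ subsequentially converge to the same limit $\widetilde{M}^{\infty}_t$. Comparing the two centerings forces $\widetilde{M}^{\infty}_t = \widetilde{M}^{\infty}_t - \rho\, e_L$, i.e.\ invariance under $\rho\, e_L$-translation for every $\rho$.

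\textbf{Step 3 (Transverse factor and speed).} Writing $\widetilde{M}^{\infty}_t = L\times N_t$, the factor $N_t$ is a 2d ancient noncollapsed convex eternal flow in $\mathbb{R}^3$ with round shrinking cylinder tangent flow at $-\infty$, and by Lemma \ref{lemma_almost_symm_further} it is rotationally symmetric about the cylinder axis. The Brendle-Choi / Angenent-Daskalopoulos-Sesum classification \cite{BC1,ADS1,ADS2} then identifies $N_t$ as a 2d-bowl. Its speed is read off by differentiating $\langle p_s, e_\phi\rangle = \sqrt{(2+o(1))|s|\log|s|}$ in $s$, which gives $\dot{p}_s \cdot e_\phi = -(1+o(1))\lambda(s)/\sqrt{2}$; under the parabolic rescaling by $\lambda(s)$ the translator thus moves at speed $1/\sqrt{2}$ in direction $-e_\phi$. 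Uniqueness of the 2d-bowl of prescribed speed upgrades subsequential to full convergence.

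\textbf{Main obstacle.} Step 2 is the substantive difficulty, and the reason this does not reduce directly to \cite{DH_ovals}. Without a priori symmetry in the $(y_1,y_2)$-plane, identifying the $\rho\, e_L$-shift from the rotation $e_\phi\mapsto e_{\phi+\theta_s}$ requires quantitative control of the $o(1)$ terms in Propositions \ref{prop_inter_lower} and \ref{prop_inter_upper} uniform under small rotations $\theta_s\sim 1/\sqrt{\log|s|}$, since the tip scale is only a $\sqrt{\log|s|}$ factor smaller than the intermediate scale. The expected route is to couple an improved inner ADS-barrier argument as in the proof of Proposition \ref{prop_inter_lower} with a separate almost $(y_1,y_2)$-rotational symmetry statement in the spirit of Lemma \ref{lemma_almost_symm_further}.
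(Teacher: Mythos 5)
Your proposal takes a genuinely different and substantially harder route than the paper, and the gap you flag in Step 2 is real and, as written, unfilled. The paper avoids the line-splitting-by-rotation argument entirely: after recording the sharp diameter asymptotics $d(t)=\sqrt{2|t|\log|t|}(1+o(1))$ and the tip curvature estimate (obtained via Hamilton's Harnack inequality combined with the MCF equation, which gives the \emph{sharp} constant $H(p_t)/\lambda(t)\to 1/\sqrt{2}$, not just two-sided bounds), the paper considers the auxiliary extremal points $q_t^\pm$ in direction $\pm e_2$ and shows, using the intermediate region asymptotics and convexity, that the enclosed body contains a triangle $p_t q_t^- q_t^+$ with $\angle q_t^- p_t q_t^+\geq \pi/4$. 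Any subsequential blowup limit of $\widetilde{M}^s_t$ therefore encloses a wedge, so the no-wing-like-flows theorem \cite[Theorem 1.4]{CHH_wing} immediately yields that the limit is $\mathbb{R}\times$2d-bowl, with the line in $e_2$-direction and the bowl translating in $-e_1$ with speed $1/\sqrt{2}$. This sidesteps both the quantitative control under $\theta_s\sim 1/\sqrt{\log|s|}$-rotations and the need to reclassify the 2d factor via \cite{BC1,ADS1,ADS2}.

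Two concrete issues with your write-up. First, in Step 2 the identity $\widetilde{M}^\infty_t=\widetilde{M}^\infty_t-\rho\, e_L$ is extracted by claiming that rescaled flows centered at $p_s^\rho$ (tip in direction $\phi+\theta_s$) converge to the \emph{same} limit as those centered at $p_s$ ``since $\theta_s\to 0$.'' But the limit's dependence on the centering direction is precisely what one is trying to determine, and the shift $\lambda(s)(p_s^\rho-p_s)\to\rho\, e_L$ is nonzero at the rescaled scale; without an a priori uniqueness or continuity statement this step is circular, and establishing the needed uniformity of the $o(1)$ errors in the intermediate region under such rotations is exactly the obstacle you name. Second, in Step 3 you read off the translator speed by ``differentiating $\langle p_s,e_\phi\rangle=\sqrt{(2+o(1))|s|\log|s|}$ in $s$,'' but differentiating an asymptotic identity does not yield an asymptotic for the derivative. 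The paper makes this rigorous via Hamilton's Harnack inequality, which gives $\frac{d}{dt}H(p_t)\geq 0$; the monotonicity together with the diameter asymptotics pins down the sharp speed. You should adopt that argument, and more broadly adopt the wedge argument, which collapses most of your Step 2 into a short convexity observation.
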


\begin{proof}
After rotating coordinates we can assume without loss of generality that $\phi=0$. Consider the distance of $p_t$ from the origin, namely
\begin{equation}
d(t):=|p_t|\, .
\end{equation}
Using the above propositions about the intermediate region and convexity we see that
\begin{equation}\label{diameter_asymptotics}
  d(t)=\sqrt{2|t| \log|t|}(1+o(1)).
\end{equation}
Moreover, by Hamilton's Harnack inequality \cite{Hamilton_Harnack} we have
\begin{equation}
    \frac{d}{dt}H(p_t)\geq 0.
\end{equation}
Together with the mean curvature flow equation this yields
\begin{equation}\label{tip curvature}
  \frac{H(p_t)}{\sqrt{|t|^{-1}\log|t|}}=\frac{1}{\sqrt{2}}+o(1).
\end{equation}
Next, consider the points $q_t^\pm \in M_t$ where 
\begin{equation}
 \max_{q\in M_t}\langle q, \pm e_2\rangle
\end{equation}
is attained. Then, by convexity the domain $K_t$ enclosed by $M_t$ contains the triangle with vertices $p_t,q_t^\pm$, and using  again the above propositions about the intermediate region we see that
\begin{equation}\label{eq_angle_cond}
\angle q_t^- p_t q_t^+\geq \frac{\pi}{4}\, 
\end{equation}
for $t\ll 0$. Now, given any sequence $s_i\to -\infty$, by the global convergence theorem \cite[Theorem 1.12]{HaslhoferKleiner_meanconvex} the sequence
\begin{equation}
\widetilde{K}^{s_i}_t:=\lambda(s_i)\cdot(M_{s_i+\lambda(s_i)^{-2}t}-p_{s_i})
\end{equation}
 converges subsequentially to a limit $K^\infty_t$. Observe that $K^\infty_t$ is a noncompact ancient noncollapsed flow that thanks to \eqref{eq_angle_cond} contains a wedge. Hence, by \cite[Theorem 1.4]{CHH_wing} it follows that $M^{\infty}_t=\partial K^\infty_t$ is the product of a line and a 2d-bowl. By construction, the line is in $e_2$-direction, and the 2d-bowl translates in negative $e_1$-direction with speed $1/\sqrt{2}$. Finally, by uniqueness of the limit, the subsequential convergence entails full convergence. This concludes the proof of the proposition.
\end{proof}

Finally, we can prove circular symmetry:

\begin{proposition}[circular symmetry]\label{circular symmetry2}
If $\mathcal{M}=\{M_t\}$ is an ancient noncollapsed mean curvature flow in $\mathbb{R}^{4}$ whose tangent flow at $-\infty$ is given by \eqref{bubble-sheet_tangent_intro} and such that
$\mathrm{rk}(Q)=2$, then $M_t$ is $\mathrm{SO}(2)$-symmetric.
\end{proposition}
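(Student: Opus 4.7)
The plan is to adapt the rotational symmetry argument of Brendle--Choi \cite{BC1} and Zhu \cite{Zhu}, which combines bubble-sheet improvement on the cylindrical part of the flow with a separate cap improvement at the tip. First, using the sharp asymptotics from Propositions \ref{prop_parabolic}, \ref{prop_inter_lower}, \ref{prop_inter_upper} and \ref{prop_tip}, together with Lemma \ref{lemma_almost_symm_further}, I would establish a canonical-neighborhood decomposition: for sufficiently negative $t$, every $p\in M_t$ either lies in a region where the rescaled flow is $\varepsilon_0$-close to the bubble-sheet $\mathbb{R}^2\times S^1(\sqrt{2})$, or lies in a tip region where, after parabolic rescaling at scale $\lambda(t)=\sqrt{|t|^{-1}\log|t|}$, the flow is $\varepsilon_0$-close to $\mathbb{R}\times N_t$ with $N_t$ the speed-$1/\sqrt{2}$ 2d-bowl. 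The crucial geometric fact is that the axis of the 2d-bowl factor (its translation direction) lies in the $\mathbb{R}^2$-factor of the bubble-sheet, so the $\mathrm{SO}(2)$-symmetry of the cap rotates the complementary $e_3e_4$-plane and is therefore compatible with the $\mathrm{SO}(2)$-action that Zhu's improvement propagates on the bubble-sheet region.

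Next, following \cite{BC1,Zhu}, for each $\tau\ll 0$ I would define
\begin{equation}
\varepsilon(\tau):=\inf\{\varepsilon>0 : \textrm{every } X=(p,t)\in\mathcal{M} \textrm{ with } t\le -e^{-\tau} \textrm{ is } \varepsilon\textrm{-symmetric}\},
\end{equation}
where $\varepsilon$-symmetry uses a normalized rotation vector field $K_X$ whose axis lies in an affine 2-plane parallel to the $\mathbb{R}^2$-factor. By Lemma \ref{lemma_almost_symm_further} on the bubble-sheet region, and by Proposition \ref{prop_tip} together with the exact rotational symmetry of $\mathbb{R}\times$(2d-bowl) on the tip region, we obtain $\varepsilon(\tau)\to 0$ as $\tau\to -\infty$. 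The goal is then to upgrade this decay to $\varepsilon(\tau)\equiv 0$, which would give the desired exact $\mathrm{SO}(2)$-symmetry. To this end I would combine two improvement steps and iterate. On bubble-sheet points Zhu's theorem \cite[Theorem 3.7]{Zhu} yields the halving $\varepsilon\mapsto \varepsilon/2$, exactly as in the proof of Lemma \ref{improvement}. On tip points one needs a cap improvement analogous to \cite[Sections 7--8]{BC1} and \cite{ADS2}: since the blowup is rigidly $\mathrm{SO}(2)$-symmetric and the 2d-bowl admits no nontrivial kernel elements for the linearized rotation equation other than infinitesimal rotations about its axis, any tip point is also $\varepsilon/2$-symmetric with respect to the rotation vector field selected on the adjacent bubble-sheet region. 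Splicing the two improvements through the overlap regions using Zhu's propagation lemma \cite[Lemma 3.5]{Zhu} produces $\varepsilon(\tau)\le \tfrac12 \varepsilon(\tau-\delta)$ for some uniform $\delta>0$, forcing $\varepsilon(\tau)=0$ for all $\tau\ll 0$, and thus exact $\mathrm{SO}(2)$-symmetry by a standard ODE-uniqueness/continuation argument for the Killing field.

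The main obstacle is the cap improvement step in the present non-two-convex setting. Unlike the 2-convex 3d-bowl of \cite{BC1,ADS2}, here the cap model is $\mathbb{R}\times$(2d-bowl), which has a strictly smaller symmetry group and nontrivial product structure: its only rotational symmetries are the $1$-parameter family fixing the $\mathbb{R}$-factor and the translation axis of the bowl. Consequently, one must carefully show that no spurious approximate Jacobi fields arise along the $\mathbb{R}$-factor, and that the selected axis of rotation in the tip region coincides with the one coming from the adjacent bubble-sheet region; this matching relies on Proposition \ref{prop_tip}, which pins down the direction of $L$ as lying in the $\mathbb{R}^2$-factor. Once this rigidity of the product cap is established, the cap improvement reduces to a Liouville-type statement for bounded Jacobi fields on $\mathbb{R}\times$(2d-bowl), proved by separation of variables in the $\mathbb{R}$-factor together with the Brendle--Choi neck uniqueness on the bowl factor, and the remainder of the proof then runs in direct parallel to \cite{BC1,ADS2,Zhu}.
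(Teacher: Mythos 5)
Your overall strategy --- a canonical-neighborhood decomposition into bubble-sheet regions and $\mathrm{Bowl}_2\times\mathbb{R}$ caps, followed by an iterated improvement argument that halves the symmetry defect --- is exactly the route the paper takes, and your reasoning on the bubble-sheet portion (Zhu's Theorem 3.7 plus the propagation Lemma 3.5) matches the paper's proof of Lemma \ref{improvement}. Where you diverge is that you treat the cap improvement for the product model $\mathrm{Bowl}_2\times\mathbb{R}$ as a substantial new obstacle requiring a fresh Liouville/Jacobi-field analysis. In fact, Zhu's cap improvement theorem \cite[Theorem 3.8]{Zhu} is already stated precisely for the $\mathrm{Bowl}_2\times\mathbb{R}$ cap model (not the $2$-convex $\mathrm{Bowl}_3$), and the paper invokes it as a black box in exactly the same way as Theorem 3.7. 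Consequently, the lengthy third paragraph of your proposal --- arguing for rigidity of the product cap, ruling out spurious Jacobi fields along the line factor, and reducing to a separation-of-variables argument --- is not a gap to be filled but a re-derivation of a result that is already available and should simply be cited. Once you replace that paragraph with a citation of Zhu's Theorem 3.8, your argument compresses to the paper's: fix $\varepsilon'\ll\min(\varepsilon_0,\varepsilon_1)$, use the canonical-neighborhood lemma (which the paper proves by a compactness argument via the global convergence theorem and Proposition \ref{prop_tip}) to get $\varepsilon$-symmetry for all $t\le t_\ast$, iterate the two improvement theorems to get $\varepsilon/2^j$-symmetry for every $j$, conclude exact $\mathrm{SO}(2)$-symmetry for $t\le t_\ast$, and propagate forward by uniqueness of smooth closed mean curvature flow. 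Your formulation via the infimum $\varepsilon(\tau)$ and the decrement $\varepsilon(\tau)\le\tfrac12\varepsilon(\tau-\delta)$ is a cosmetic variant of this iteration and would also work, but it is an unnecessary layer of bookkeeping compared with the paper's cleaner $\varepsilon/2^j$ statement.
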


Recall that by Zhu's bubble-sheet improvement \cite[Theorem 3.7]{Zhu} there exist constants $\eps_0>0$ and $L_0<\infty$ with the following significance. Given $\varepsilon\leq\varepsilon_{0}$ and $X\in \mathcal{M}$, if every $X'\in \mathcal{M}\cap P(X, L_0/H(X))$ is $\varepsilon_{0}$-close to a bubble-sheet $\mathbb{R}^2\times S^1$ and $\varepsilon$-symmetric,  then $X$ is $\eps/2$-symmetric. Furthermore, by Zhu's cap improvement \cite[Theorem 3.8]{Zhu} there constants $\eps_1>0$ and $L_1<\infty$ with the following significance. If  $\mathcal{M}\cap P(X, L_1/H(X))$ is $\varepsilon_1$-close to a piece of $\mathrm{Bowl}_2\times\mathbb{R}$, and for some $\eps\leq\eps_1$ every point in $X'\in \mathcal{M}\cap P(X, L_1/H(X))$ is $\eps$-symmetric, then $X$ is $\eps/2$-symmetric.\\

To apply this we need canonical neighborhoods:

\begin{lemma}[canonical neighborhoods]\label{can_nbd}
For every $\varepsilon'>0$, there exists a $t_\ast>-\infty$, such that for $t\leq t_\ast$ every $(p,t)\in\mathcal{M}$ is $\eps'$-close either to a round shrinking bubble-sheet $\mathbb{R}^2\times S^1$ or to a piece of a translating $\mathrm{Bowl}_2\times\mathbb{R}$.
\end{lemma}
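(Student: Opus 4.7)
The plan is to argue by contradiction. Suppose there exist $\varepsilon' > 0$ and a sequence $(p_i, t_i) \in \mathcal{M}$ with $t_i \to -\infty$ such that $(p_i, t_i)$ is not $\varepsilon'$-close to either model. Rescaling parabolically by $H(p_i, t_i)$ and invoking the global convergence theorem \cite[Theorem 1.12]{HaslhoferKleiner_meanconvex}, I would extract a subsequential ancient noncollapsed limit $\mathcal{M}^\infty$ with $H(0,0) = 1$. Since the diameter of $M_{t_i}$ grows like $\sqrt{|t_i|\log|t_i|}$ while $H(p_i,t_i)^{-1} \lesssim \sqrt{|t_i|/\log|t_i|}$, the rescaled diameter diverges, so $\mathcal{M}^\infty$ is noncompact. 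The goal is to identify $\mathcal{M}^\infty$ as either a round shrinking $\mathbb{R}^2 \times S^1$ or $\mathrm{Bowl}_2 \times \mathbb{R}$, producing the contradiction.

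To identify $\mathcal{M}^\infty$, I decompose $p_i = (p_i^{(1)}, p_i^{(2)}) \in \mathbb{R}^2 \times \mathbb{R}^2$ along the long and short factors of the bubble-sheet tangent, let $\phi_i$ be the angular direction of $p_i^{(1)}$, and denote by $q_i := p_{t_i}^{\phi_i}$ the corresponding tip point from Proposition \ref{prop_tip}. Set $R_i := \lambda(t_i)|p_i - q_i|$. If $R_i$ stays bounded along a subsequence, then by Proposition \ref{prop_tip} the tip-rescaled flow $\lambda(t_i)\cdot(\mathcal{M}-(q_i,t_i))$ converges smoothly on compact parabolic neighborhoods to $\mathrm{Bowl}_2 \times \mathbb{R}$, and since $H(p_i,t_i)$ is comparable to $\lambda(t_i)$ at any tip-close point, $\mathcal{M}^\infty$ is itself a piece of $\mathrm{Bowl}_2 \times \mathbb{R}$.

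If instead $R_i \to \infty$, I further split into two sub-cases. When $|p_i^{(1)}|/\sqrt{|t_i|\log|t_i|}$ stays bounded away from $\sqrt{2}$, Propositions \ref{prop_parabolic}, \ref{prop_inter_lower}, \ref{prop_inter_upper}, together with Lemma \ref{lemma_almost_symm_further}, imply that the renormalized profile at $p_i$ is close to that of a cylinder of radius $\sqrt{2 - z_\infty^2}$, where $z_\infty \in [0, \sqrt{2})$ is the subsequential limit of $|p_i^{(1)}|/\sqrt{|t_i|\log|t_i|}$. Translating back, $M_{t_i}$ is $o(1)$-close on a parabolic ball of size comparable to $1/H(p_i,t_i)$ to a round shrinking $\mathbb{R}^2 \times S^1$, so $\mathcal{M}^\infty = \mathbb{R}^2 \times S^1$. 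When $|p_i^{(1)}|/\sqrt{|t_i|\log|t_i|} \to \sqrt{2}$, I apply Proposition \ref{prop_tip} on tip-rescaled parabolic balls of radius $2R_i$, which still yields an $o(1)$-approximation by $\mathrm{Bowl}_2 \times \mathbb{R}$; the rescaled image of $p_i$ lies at tip-scale distance $R_i \to \infty$ along the bowl's translating direction. At such a far point the 2d-bowl asymptotes to a paraboloid with cross-sectional radius $\sqrt{2R_i}(1+o(1))$, so rescaling by $H \sim 1/\sqrt{2R_i}$ and taking the product with the $\mathbb{R}$-factor produces a round shrinking $\mathbb{R}^2 \times S^1$; hence $\mathcal{M}^\infty = \mathbb{R}^2 \times S^1$.

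The main obstacle is the last sub-case, i.e.\ the ``gap'' between the intermediate-region asymptotics (which live on compact subsets of $\{z^2 < 2\}$) and the bounded tip region of Proposition \ref{prop_tip}. Executing it rigorously requires extracting quantitative smooth convergence from Proposition \ref{prop_tip} on parabolic balls whose tip-rescaled radius $R_i$ is allowed to grow with $|t_i|$, together with the explicit paraboloid asymptotics of the 2d-bowl at infinity, and a careful verification that after $H$-rescaling the limit is exactly a round shrinking bubble-sheet in $\mathbb{R}^4$. Once this is in place, the tip-close, intermediate, and gap cases exhaust all possibilities and the contradiction follows.
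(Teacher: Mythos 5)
Your overall framework — argue by contradiction, rescale about $(p_i,t_i)$ by $H(p_i,t_i)$, invoke the global convergence theorem to extract an ancient noncollapsed limit $\mathcal{M}^\infty$, and then identify $\mathcal{M}^\infty$ — matches the paper's. The key difference lies in the case decomposition, and your version leaves a gap that you yourself flag.

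The paper splits on whether $H(p_i,t_i)\cdot\mathrm{dist}(p_i,T_{t_i})$ is bounded or tends to infinity, where $T_{t_i}$ is the \emph{full} tip set $\bigcup_\phi\{p_{t_i}(\phi)\}$, not just the tip in the angular direction of $p_i$. In the unbounded case the argument is qualitative and uniform: since the tip set (which caps off the hypersurface in the $\mathbb{R}^2$-directions) is pushed out to infinity in the rescaled picture, the convex limit $M^\infty_0$ is noncompact in two independent directions, hence splits off two lines, and hence (as in the proof of Lemma \ref{lemma_almost_symm_further}) is a round shrinking $\mathbb{R}^2\times S^1$. In the bounded case Proposition \ref{prop_tip} applies directly and the limit is a piece of $\mathrm{Bowl}_2\times\mathbb{R}$. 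No intermediate-region or bowl-at-infinity asymptotics are needed at all. Your decomposition instead uses $R_i=\lambda(t_i)|p_i-q_i|$ together with the ratio $|p_i^{(1)}|/\sqrt{|t_i|\log|t_i|}$, which forces you into the ``gap'' sub-case where $R_i\to\infty$ but $|p_i^{(1)}|/\sqrt{|t_i|\log|t_i|}\to\sqrt2$. There you are asking for quantitative smooth convergence from Proposition \ref{prop_tip} on tip-rescaled balls of unbounded radius $R_i$ and for explicit paraboloid asymptotics of the 2d-bowl; neither is established in the paper as stated, so this sub-case is genuinely unfinished. The lesson is that measuring distance to the \emph{whole} tip set and rescaling by $H(p_i,t_i)$ (rather than by $\lambda(t_i)$) produces a cleaner dichotomy in which the far-from-tip case resolves by a soft splitting argument instead of a matched-asymptotics argument.

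Two smaller remarks. First, $\mathcal{M}^\infty$ need not be noncompact and the paper does not claim it is; what matters is which model appears, not compactness per se. Second, your noncompactness estimate $H(p_i,t_i)^{-1}\lesssim\sqrt{|t_i|/\log|t_i|}$ is incorrect when $p_i$ is in the central region (there $H^{-1}\sim\sqrt{|t_i|}$); the conclusion that the rescaled diameter diverges still holds, but via $H\cdot\mathrm{diam}\gtrsim\sqrt{\log|t_i|}$, and in any case this step is not needed.
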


\begin{proof} Suppose towards a contradiction for some $t_i\to -\infty$ there are $(p_i,t_i)\in \mathcal{M}$ that are neither $\eps'$-close to a round shrinking bubble-sheet $\mathbb{R}^2\times S^1$ nor to a piece of a translating $\mathrm{Bowl}_2\times\mathbb{R}$. Let $T_t$ be the set of tip points at time $t$, namely $T_t=\cup_\phi \{p_t(\phi)\}$, where for any angle $\phi$ we denote by $p_t(\phi)\in M_t$ the point at which $\max_{p\in M_t}\langle p,\cos(\phi) e_1 + \sin(\phi) e_2\rangle$ is attained. After passing to a subsequence we can assume that $H(p_i,t_i)\textrm{dist}(p_i,T_{t_i})$ either converges to infinity or to a finite constant. However, by the global convergence theorem \cite[Theorem 1.12]{HaslhoferKleiner_meanconvex} after passing to a further subsequence
\begin{equation}
\widetilde{M}^i_t:=H(p_i,t_i)\cdot(M_{t_i+H(p_i,t_i)^{-2}t}-p_{i})
\end{equation}
converges to a limit, which in the first case splits off two lines and hence must be a round shrinking bubble-sheet $\mathbb{R}^2\times S^1$, and in the second case thanks to Proposition \ref{prop_tip} (tip region) is a piece of a translating $\mathrm{Bowl}_2\times\mathbb{R}$. This gives the desired contradiction, and thus proves the lemma.
\end{proof}

Having established canonical neighborhoods, we can now conclude the argument similarly as in \cite[proof of Theorem 1.5]{ADS2}: 

\begin{proof}[Proof of Proposition \ref{circular symmetry2}]
Fix $\eps'\ll \eps:=\min(\eps_1,\eps_2)$. By Lemma \ref{can_nbd} (canonical neighborhoods) there exists $t_\ast>-\infty$, such that for $t\leq t_\ast$ every $(p,t)\in\mathcal{M}$ is $\eps'$-close either to a bubble-sheet $\mathbb{R}^2\times S^1$ or to a piece of $\mathrm{Bowl}_2\times\mathbb{R}$. In particular, by our choice of constants any such point is $\eps$-symmetric. Hence, by Zhu's bubble-sheet and cap improvement \cite[Theorem 3.7 and Theorem 3.8]{Zhu} for $t\leq t_\ast$ every $(p,t)\in\mathcal{M}$  is $\eps/2$-symmetric. Iterating this, we infer that given any positive integer $j$, for $t\leq t_\ast$ every $(p,t)\in\mathcal{M}$  is $\eps/2^j$-symmetric. Since $j$ is arbitrary, this implies that $M_t$ is $\mathrm{SO}(2)$-symmetric for $t\leq t_\ast$. Finally, by uniqueness of closed smooth solutions of the mean curvature flow the $\mathrm{SO}(2)$-symmetry is preserved also forwards in time. This concludes the proof of the proposition.
\end{proof}

\bigskip

Theorem \ref{Rk=2} (non-degenerate case) now follows by combining   the sharp asymptotics from above and Proposition \ref{circular symmetry2} (circular symmetry).

\bigskip

 \begin{appendix}
 \section{Graphical evolution over cylinders}\label{ApA}
Suppose $\bar{M}_\tau\subset\mathbb{R}^{n+1}$ moves by renormalized mean curvature flow
\begin{equation}
\partial_\tau y = \vec{H}+\frac{y^\perp}{2},
\end{equation}
and suppose $\bar{M}_\tau$ can be locally parametrized over $\mathbb{R}^{k}\times S^{n-k}$ via
\begin{equation}
(z,\omega)\mapsto (z,v(z,\omega,\tau)\omega),
\end{equation}
where $z\in U$ for some open set $U\subseteq \mathbb{R}^k$, and $\omega\in S^{n-k}$. We write $\{\partial_{\alpha}\}_{\alpha=1,\ldots,k}$ for derivatives along the $\mathbb{R}^k$-factor, and $\{\nabla_i\}_{i=k+1,\ldots,n}$ for (covariant) derivatives along the $S^{n-k}$-factor. The goal of this appendix is to prove:

\begin{proposition}[evolution over cylinder]\label{MCF_graphical_equation}
The graphical function $v$ evolves by
\begin{align}\label{v_evolution}
    \partial_{\tau}v=& \frac{A_{\alpha\beta}\partial_{\alpha}\partial_{\beta}v+B_{ij}\nabla_{i}\nabla_{j}v-2\partial_{\alpha} v  \nabla_i v \nabla_i\partial_{\alpha} v -v^{-1}|\nabla v|^2}{(1+|\partial v|^2)v^2+|\nabla v|^2}\\
    & -\frac{n-k}{v}+\frac{1}{2}\left(v- z_\alpha \partial_\alpha v\right),\nonumber
\end{align}
where 
\begin{equation}\label{A}
    A_{\alpha\beta}=[(1+|\partial v|^2)v^2+|\nabla v|^2]\delta_{\alpha\beta}-v^{2}\partial_{\alpha}v \partial_{\beta}v,
\end{equation}
and
\begin{equation}\label{B}
    B_{ij}=(1+|\partial v|^2+v^{-2}|\nabla v|^2)\delta_{ij}-v^{-2}\nabla_{i}v\nabla_{j}v.
\end{equation}
\end{proposition}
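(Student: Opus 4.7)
The proof is a direct calculation, and I would carry it out in four steps. First, I would parametrize the graph as $F(z,\omega,\tau) = (z, v(z,\omega,\tau)\omega) \in \mathbb{R}^k \oplus \mathbb{R}^{n-k+1}$, compute the tangent vectors $\partial_\alpha F = e_\alpha + (\partial_\alpha v)\omega$ and $\nabla_i F = v\nabla_i \omega + (\nabla_i v)\omega$ in a local orthonormal frame $\{\nabla_i \omega\}$ for $TS^{n-k}$, and check directly that the un-normalized outward normal is $N = -\partial_\alpha v \cdot e_\alpha + \omega - v^{-1}\nabla_i v \cdot \nabla_i \omega$ with $W := |N|^2 = 1 + |\partial v|^2 + v^{-2}|\nabla v|^2$, so $\nu = N/\sqrt{W}$. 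Since $\partial_\tau F = \partial_\tau v \cdot \omega$ and $F \cdot N = v - z_\alpha \partial_\alpha v$, projecting the RMCF equation $\partial_\tau F = \vec H + F^\perp/2$ onto $\nu$ reduces matters to
\begin{equation*}
\partial_\tau v = -H\sqrt{W} + \tfrac12(v - z_\alpha \partial_\alpha v),
\end{equation*}
so only $-H\sqrt{W} = g^{ab}\partial_a \partial_b F \cdot N$ remains to be computed (Christoffel corrections being tangential, they drop out when dotted with $N$).

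Second, I would invert the induced metric via Sherman--Morrison. The metric factorizes as $g = g_0 + u u^T$ with $g_0 = \mathrm{diag}(I_k, v^2 I_{n-k})$ and $u = (\partial v, \nabla v)^T$; using $u^T g_0^{-1} u = W - 1$ this yields
\begin{equation*}
g^{\alpha\beta} = \delta_{\alpha\beta} - \tfrac{1}{W}\partial_\alpha v\,\partial_\beta v,\quad g^{\alpha i} = -\tfrac{1}{v^2 W}\partial_\alpha v\,\nabla_i v,\quad g^{ij} = \tfrac{1}{v^2}\delta_{ij} - \tfrac{1}{v^4 W}\nabla_i v\,\nabla_j v.
\end{equation*}
Third, I would compute the three ambient-Hessian pieces $\partial_a \partial_b F \cdot N$ at an arbitrary basepoint on $S^{n-k}$, working in geodesic normal coordinates (so that $\partial_i v = \nabla_i v$ and $\partial_i \partial_j \omega = -\delta_{ij}\omega$ at the basepoint, using the Gauss formula for $S^{n-k} \subset \mathbb{R}^{n-k+1}$). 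Direct substitution gives
\begin{equation*}
\partial_\alpha \partial_\beta F \cdot N = \partial_\alpha \partial_\beta v,\qquad \partial_\alpha \partial_i F \cdot N = \nabla_i \partial_\alpha v - v^{-1}\partial_\alpha v\,\nabla_i v,
\end{equation*}
\begin{equation*}
\partial_i \partial_j F \cdot N = \nabla_i \nabla_j v - v\delta_{ij} - 2 v^{-1}\nabla_i v\,\nabla_j v.
\end{equation*}

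Finally, I would contract and group. After multiplying through by $v^2 W$, the Euclidean Hessian assembles into $A_{\alpha\beta}\partial_\alpha \partial_\beta v$, the spherical Hessian into $B_{ij}\nabla_i \nabla_j v$, and the mixed piece into $-2\partial_\alpha v\,\nabla_i v\,\nabla_i \partial_\alpha v$, matching the tensors in (A.2)--(A.3). Tracing the $-v\delta_{ij}$ piece against $g^{ij}$ yields the drift $-(n-k)/v$ together with an additional $W^{-1}v^{-3}|\nabla v|^2$ contribution. The main obstacle is the final bookkeeping: several distinct $|\nabla v|^2$-type terms --- arising from the $W^{-1}$-corrections in $g^{\alpha i}$ and $g^{ij}$, from the $-v^{-1}\partial_\alpha v\,\nabla_i v$ mixed piece, from the $-2v^{-1}\nabla_i v\,\nabla_j v$ spherical piece, and from the trace term above --- must conspire, via the identity $2W = 2 + 2|\partial v|^2 + 2v^{-2}|\nabla v|^2$, to collapse into the single numerator factor $-v^{-1}|\nabla v|^2$ in (A.1). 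Since both sides of (A.1) are built from invariant tensor fields on $\mathbb{R}^k \times S^{n-k}$, evaluating at the center of normal coordinates suffices to establish the general identity.
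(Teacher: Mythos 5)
Your proposal is correct and arrives at the same formula, but it organizes the calculation differently from the paper, and the difference is worth noting. The paper scales the normal as $N = v\omega - \nabla v - v\,\partial v$ (so $|N|^2 = (1+|\partial v|^2)v^2 + |\nabla v|^2$), and after writing out the full metric and second fundamental form, exploits a pointwise rotation so that only $\partial_k v$ and $\nabla_{k+1}v$ are nonzero; this collapses the induced metric to a $2\times 2$ block inside an otherwise diagonal matrix, and the mean curvature is then read off as a small block-matrix trace $\mathrm{tr}(PQ)$ plus diagonal pieces, with the final tensorial expression reconstructed afterward. You instead keep full generality: you scale $N$ by $1/v$ so that $|N|^2 = W = 1 + |\partial v|^2 + v^{-2}|\nabla v|^2$, invert the metric globally via Sherman--Morrison (treating $g = g_0 + uu^T$ as a rank-one perturbation of the flat product metric, with $u^T g_0^{-1}u = W-1$), and contract $g^{ab}\,\partial_a\partial_b F\cdot N$ directly. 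Multiplying through by $v^2 W$ does then recover $A_{\alpha\beta}$, $B_{ij}$, the mixed term, and --- after the scalar $|\nabla v|^2$-bookkeeping you flag --- exactly the residual $-v^{-1}|\nabla v|^2$ and $-(n-k)/v$ terms. I checked this contraction explicitly and it closes. What your route buys is that the metric inverse and the mean curvature come out in invariant tensor form at once, with no need to rotate coordinates or to re-covariantize a block-matrix answer at the end; what the paper's route buys is that the matrices one actually has to invert and trace are tiny, which is convenient for a by-hand verification. The step you identify as the main obstacle --- that the various $|\nabla v|^2$ terms collapse to $-v^{-1}|\nabla v|^2$ --- is indeed where care is needed, and your description of the cancellation is accurate: the $+2v^{-1}|\partial v|^2|\nabla v|^2$ from the mixed piece cancels against $-2v^{-1}(1+|\partial v|^2)|\nabla v|^2$ from $B_{ij}(-2v^{-1}\nabla_i v\nabla_j v)$, and the surviving $+v^{-1}|\nabla v|^2$ from the trace of $-v\delta_{ij}$ against the Sherman--Morrison correction in $g^{ij}$ combines to give $-v^{-1}|\nabla v|^2$. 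Both proofs use geodesic normal coordinates on $S^{n-k}$ and the Gauss relation $\partial_i\partial_j\omega = -\delta_{ij}\omega$ at the basepoint, and both rely on the observation that since the final identity is tensorial, it suffices to verify it at the center of normal coordinates.
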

\begin{proof} We will generalize the computation from  \cite[Appendix A]{GKS} from necks to general cylinders.
Considering the parametrization
\begin{equation}
    F(z, \omega,\tau)=(z, v(z, \omega,\tau)\omega),
\end{equation}
and working in an orthonormal basis $e_i\in T_\omega S^{n-k}$ we compute
\begin{equation}
    D_{\alpha}F=e_{\alpha}+\partial_{\alpha}v\, \omega,\qquad D_{i}F=ve_{i} + \nabla_{i}v\, \omega\, .
\end{equation}
Thus, the outwards unit normal is
 \begin{equation}\label{nunormal}
    \nu=\frac{N}{|N|}\, , \qquad\textrm{where } N=v\, \omega - \nabla v   - v \partial v \, .
\end{equation}
Since $\bar{M}_\tau$ moves by renormalized mean curvature flow, the graph function $v$ evolves by
\begin{equation}
\partial_\tau v = \frac{1}{\langle \omega ,\nu\rangle} \left\langle \vec{H}+\frac{1}{2} F,\nu\right\rangle\, .
\end{equation}
One easily sees that
\begin{equation}
|N|=\sqrt{(1+|\partial v|^2)v^2+|\nabla v|^2}\, , \qquad \frac{\langle  F,\nu\rangle}{\langle \omega ,\nu\rangle}= v -z_\alpha \partial_\alpha v\, .
\end{equation}
Hence, our main task is to compute the mean curvature $H= -\langle \vec{H},\nu\rangle$.
To this end, observe first that the components of the metric  are
\begin{align}
    g_{\alpha\beta}=\delta_{\alpha\beta}+\partial_{\alpha}v \partial_{\beta}v\, ,\quad     g_{\alpha i}=\partial_{\alpha}v \nabla_{i} v\, ,
\end{align}
and
\begin{align}    
        g_{ij}=v^2\delta_{ij}+\nabla_{i}v \nabla_{j}v\, .
\end{align}
Next, we compute
\begin{equation}
D_{\alpha}D_{\beta}F=\partial_{\alpha}\partial_\beta v\,  \omega\, , \quad D_\alpha D_i F = \nabla_i \partial_\alpha v \, \omega +\partial_\alpha v\, e_i\, , 
\end{equation}
and
\begin{equation}
\quad D_iD_j F =\nabla_i v\,  e_j+\nabla_j v \, e_i-v\delta_{ij}\omega+ \nabla_i\nabla_j v\, \omega\, .
\end{equation}
Thus, the components of the second fundamental form are
\begin{equation}
h_{\alpha\beta}= -\frac{v \partial_\alpha\partial_\beta v}{|N|}\, ,\quad   h_{\alpha i}=\frac{\partial_{\alpha}v \nabla_{i} v-v\nabla_{i}\partial_{\alpha}v}{|N|}\, ,
\end{equation}
and 
\begin{equation}
 \quad     h_{ij}=\frac{v^2\delta_{ij}+ 2\nabla_i v\nabla_j v-v\nabla_i\nabla_j v}{|N|}  \, .
\end{equation}
Now, by symmetry we may assume that $\partial_{k}v$ and $\nabla_{k+1}v$ are the only nonvanishing first derivates at the point in consideration. Then, we can identify the metric with the block matrix
\begin{equation}
    g=
    \left (\begin{array}{cccc}
       I_{k-1} & 0 & 0 & 0\\
       0 &  1+|\partial v|^{2}  & \partial_{k}v \nabla_{k+1}v& 0\\
       0 &   \partial_{k}v \nabla_{k+1}v & v^{2}+|\nabla v|^{2}&0\\
       0&0&0& v^2I_{n-k-1}
    \end{array}\right)\, .
\end{equation}
Using this we infer that the mean curvature equals
\begin{align}
H = -\frac{v\sum_{\alpha\neq k} \partial_\alpha^2v}{|N|} +\mathrm{tr}(PQ)
+\frac{ (n-k-1)v^2- v \sum_{i\neq k+1} \nabla^2_i v }{v^2|N|}\, ,
\end{align}
where
\begin{equation}
    P=
   \frac{1}{|N|^2} \left (\begin{array}{cc}
      v^{2}+|\nabla v|^{2}    & - \partial_{k}v \nabla_{k+1}v\\
   -  \partial_{k}v \nabla_{k+1}v & 1+|\partial v|^{2}
    \end{array}\right)\, ,
\end{equation}
and
\begin{equation}
    Q= \frac{1}{|N|}\left (\begin{array}{cc}
      - v \partial_k^2v   &  \partial_k v\nabla_{k+1}v -  v \nabla_{k+1}\partial_k v\\
 \partial_k v\nabla_{k+1}v -  v \nabla_{k+1}\partial_k v & v^2 + 2|\nabla v|^2-v\nabla_{k+1}^2 v
    \end{array}\right)\, .
\end{equation}
Computing $\mathrm{tr}(PQ)$ and recasting all the terms in coordinate-independent notation we conclude that
\begin{align}\label{mean_curvature}
   - \frac{|N|}{v}H=\frac{A_{\alpha\beta}\partial_{\alpha}\partial_{\beta}v+B_{ij}\nabla_{i}\nabla_{j}v-2\partial_{\alpha} v \nabla_i v \nabla_i \partial_{\alpha}v-v^{-1}|\nabla v|^{2}}{|N|^{2}}
    -\frac{n-k}{v}\, ,\nonumber
\end{align}
where $A_{\alpha\beta}, B_{ij}$ are defined in \eqref{A} and \eqref{B}.
This yields the assertion.
\end{proof}

\end{appendix}

\bigskip

\bibliography{hearing_shape}

\bibliographystyle{alpha}


{\sc Wenkui du, Department of Mathematics, University of Toronto,  40 St George Street, Toronto, ON M5S 2E4, Canada}

{\sc Robert Haslhofer, Department of Mathematics, University of Toronto,  40 St George Street, Toronto, ON M5S 2E4, Canada}

\emph{E-mail:} wenkui.du@mail.utoronto.ca, roberth@math.toronto.edu

\end{document}